\documentclass[11pt,reqno]{amsart}
\usepackage[T1]{fontenc}
\usepackage{lmodern}
\usepackage[letterpaper,margin=1in]{geometry}
\usepackage{mathtools,amssymb,mathrsfs}
\usepackage{tikz-cd}
\usepackage{enumitem,booktabs,array,microtype,needspace}
\microtypesetup{expansion=false}
\usepackage[hidelinks]{hyperref}
\hypersetup{
  pdftitle={Stability of representation cohomology sheaves on moduli spaces of bundles},
  pdfauthor={Jianping Wang and Xueqing Wen}
}
\newtheorem{theorem}{Theorem}[section]
\newtheorem{proposition}[theorem]{Proposition}
\newtheorem{lemma}[theorem]{Lemma}

\theoremstyle{definition}
\newtheorem{definition}[theorem]{Definition}
\newtheorem{question}[theorem]{Question}
\theoremstyle{remark}
\newtheorem{remark}[theorem]{Remark}
\numberwithin{equation}{section}
\mathtoolsset{showonlyrefs}
\newcommand{\C}{\mathbb C}
\newcommand{\Pp}{\mathbb P}
\newcommand{\OO}{\mathcal O}
\newcommand{\EE}{\mathcal E}

\newcommand{\FF}{\mathcal F}
\newcommand{\MM}{\mathfrak M}

\newcommand{\gfr}{\mathfrak g}
\newcommand{\hfr}{\mathfrak h}

\newcommand{\rk}{\operatorname{rk}}
\newcommand{\im}{\operatorname{im}}
\newcommand{\Ann}{\operatorname{Ann}}
\newcommand{\ad}{\operatorname{ad}}
\newcommand{\Ad}{\operatorname{Ad}}
\newcommand{\End}{\operatorname{End}}

\newcommand{\Sym}{\operatorname{Sym}}
\newcommand{\SL}{\operatorname{SL}}
\newcommand{\GL}{\operatorname{GL}}
\newcommand{\PGL}{\operatorname{PGL}}

\newcommand{\Bun}{\operatorname{Bun}}
\newcommand{\Pic}{\operatorname{Pic}}
\newcommand{\Cl}{\operatorname{Cl}}
\newcommand{\SU}{\mathcal{SU}}
\newcommand{\Lie}{\operatorname{Lie}}
\newcommand{\tr}{\operatorname{tr}}
\newcommand{\Res}{\operatorname{Res}}
\newcommand{\codim}{\operatorname{codim}}

\newcommand{\eps}{\varepsilon}
\newcommand{\etale}{\'{e}tale}

\setlist[enumerate,1]{label=\textup{(\roman*)},leftmargin=2.1em,itemsep=4pt,topsep=4pt}
\allowdisplaybreaks[1]
\title[Stability of representation cohomology sheaves]{Stability of representation cohomology sheaves on moduli spaces of bundles}

\author{Jianping Wang}
\address{Beijing Institute of Mathematical Sciences and Applications, No. 544, Hefangkou Village, Huaibei Town, Huairou District, Beijing, 101408, China}
\email{jianpw@bimsa.cn}

\author{Xueqing Wen}
\address{Chongqing University of Technology, No. 69, Hongguang Avenue, Banan District, Chongqing, 400054, China.}
\email{wenxq@cqut.edu.cn}

\date{}
\subjclass[2020]{Primary 14H60; Secondary 14D20, 14J60, 14F06, 20G05}
\keywords{Principal bundles, tangent sheaves, slope stability, Hecke curves, irreducible representations}

\begin{document}
\begin{abstract}
We prove that the tangent sheaf of $\SU_C(n,L)$ is slope stable for every
smooth complex projective curve $C$ of genus $g\geq2$, every $n\geq2$,
and every determinant $L$. For $g\geq3$, we prove a more general stability
theorem on moduli spaces of principal bundles for connected reductive
groups, using cohomology sheaves associated with representations in
place of tangent sheaves. The adjoint representation gives tangent
sheaf stability for the case of almost simple groups. The moduli spaces may be
singular, and stability holds with respect to every ample Cartier
polarization.
\end{abstract}
\maketitle

\section{Introduction}\label{sec:intro}

The slope stability of the tangent bundle is an important question in the geometry of Fano varieties. For a smooth Fano variety, $K$-polystability is equivalent to the existence of a K\"ahler--Einstein metric \cite{Ber16,CDS15III,Tia15}. Such a metric induces a Hermitian--Einstein metric on the tangent bundle and hence slope polystability with respect to the anticanonical polarization \cite{Kob82,Lub83,Don87,UY86}.

For a smooth complex projective curve $C$, the moduli spaces of semistable vector bundles with fixed determinant, denoted by $\SU_C(n,L)$, form a distinguished class of normal Fano varieties of Picard number one. They are smooth when $n$ and $\deg L$ are coprime \cite{Ram73,DN89}. The K\"ahler--Einstein problem for these spaces has a long history. Nadel's example, a smooth intersection of two quadrics in $\Pp^5$ which admits a K\"ahler--Einstein metric \cite{Nad90}, is isomorphic to $\SU_C(2,L)$ for some curve $C$ of genus $2$, described by Newstead \cite{New68}. More recently, Keller proved that $\SU_C(2,L)$ is $K$-stable for a general curve $C$ with $g\geq 2$ and $L$ of odd degree \cite[Theorem~1.1]{Kel24}. For a recent explicit formulation of the $K$-stability problem for $\SU_C(n,L)$, see Xu--Zhuang \cite[Problem~9(1)]{XZ26}.

We study the slope stability of the tangent sheaf of $\SU_C(n,L)$.
For rank two and odd degree, Hwang proved stability by studying Hecke
curves and their tangent directions \cite{Hwa00}. His approach relates the problem to varieties of minimal rational tangents. Further results on Hecke curves and on minimal rational curves were obtained by Hwang--Ramanan \cite{HR04} and Sun
\cite{Sun05}. Stability in higher rank, for coprime rank and degree and genus at least three, was also stated in an
unpublished preprint of Iyer \cite{Iye14}. Our proof is independent of that preprint and establishes tangent sheaf stability for every
$n\geq2$, $g\geq2$, and for every determinant $L$.

Without the coprimality assumption, these moduli spaces may be singular. For a klt $\mathbb Q$-Fano variety $X$, $K$-semistability implies
slope semistability of $T_X$ with respect to $-K_X$, while the
existence of a K\"ahler--Einstein metric implies slope polystability
\cite{Li21,DGP24}.
These results also motivate the study of tangent sheaves on possibly singular moduli spaces such as moduli spaces of principal bundles on a curve.

More generally, we consider moduli spaces of principal bundles for a
connected reductive group $G$, with or without a fixed induced
$G/G_{\mathrm{der}}$-bundle. We study the cohomology sheaves associated
with representations of $G$. The adjoint representations specified
below give the tangent sheaves.

Let $C$ be a smooth connected complex projective curve of genus
$g\geq2$. For a connected reductive complex algebraic group $G$, we
use the notation
\[
 G_{\mathrm{der}}=[G,G],\qquad
 G_{\mathrm{ad}}=G/Z(G),\qquad
 G^{\mathrm{ab}}=G/G_{\mathrm{der}}.
\]
For $\tau\in\pi_1(G)$, let $\MM_G^\tau=\Bun_G^\tau(C)$ be the stack
of principal $G$-bundles of topological type $\tau$, and let
$M_G^\tau(C)$ be the moduli space of S-equivalence classes of
semistable objects, as constructed by Ramanathan
\cite{Ram96I,Ram96II}. Write $\tau^{\mathrm{ab}}$ for the image of
$\tau$ in $\pi_1(G^{\mathrm{ab}})$. Extension of structure group
induces
\[
 \operatorname{ab}:M_G^\tau(C)\longrightarrow
 M_{G^{\mathrm{ab}}}^{\tau^{\mathrm{ab}}}(C),\qquad
 [P]\longmapsto[P\times^G G^{\mathrm{ab}}].
\]
For a principal $G^{\mathrm{ab}}$-bundle $Q$ of type
$\tau^{\mathrm{ab}}$, let $\MM_{G,Q}^{\tau}$ be the stack of pairs
$(P,\alpha)$, where
$\alpha:P\times^G G^{\mathrm{ab}}\xrightarrow{\sim}Q$, and put
\[
 M_{G,Q}^{\tau}(C)=\operatorname{ab}^{-1}([Q]).
\]
This is the moduli space of S-equivalence classes of semistable
objects whose induced $G^{\mathrm{ab}}$-bundle is isomorphic to $Q$. When $G$ is semisimple,
$G^{\mathrm{ab}}=1$ and the two versions coincide. For $G=\GL_n$, $d\in\pi_1(G)$ and
$L\in M_{\C^*}^d(C)=\Pic^d(C)$, we recover $\SU_C(n,L)$:
\[
 \operatorname{ab}=\det:M_{\GL_n}^{d}(C)\longrightarrow\Pic^{d}(C),\qquad
 M_{\GL_n,L}^{d}(C)=\SU_C(n,L),\quad \deg L=d.
\]
Unless a version is specified, $M$ denotes either
$M_G^\tau(C)$ or $M_{G,Q}^{\tau}(C)$, and is assumed to be nonempty.

Let $(\rho,V)$ be a nonzero finite-dimensional algebraic
representation satisfying
\[
\rho:G\longrightarrow\GL(V),\qquad r=\dim V,\qquad
 V^G=0,\qquad \rho|_{Z(G)}=1.
\]
For a principal bundle $P$, write $E_\rho(P)=P\times^G V$.
Write $G_{\mathrm{ad}}=\prod_{i=1}^sG_i$, with $G_i$ adjoint almost
simple, and fix a maximal torus and a Borel subgroup in each factor.
For each $i$, let $m_i$ be the largest highest weight occurring
in the restriction of $V$ to the $\SL_2$ associated with the
highest long root of $G_i$. Put
\begin{equation}\label{eq:m}
 m_\rho=\max_i \{m_i\}.
\end{equation}

For $g\geq3$, let $U\subset M$ be the inverse image of the
regularly stable loci in the adjoint factors, as in
Section~\ref{sec:moduli}. By
Proposition~\ref{prop:associated-descent}, the universal associated
bundle descends to a vector bundle $\EE_\rho$ on $U\times C$.
Let $p:U\times C\to U$ be the projection. Since $M$ is normal
and $\codim_M(M\setminus U)\geq2$, define $\mathcal V_\rho$
to be the unique coherent reflexive sheaf on $M$ extending
$(R^1p_*\EE_\rho)^{**}$ from $U$, so that
\begin{equation}\label{eq:Vr-intro}
 \mathcal V_\rho|_U\simeq(R^1p_*\EE_\rho)^{**}.
\end{equation}
By Theorem~\ref{thm:interp} below, $H^0(C,E_\rho(P))=0$ for general
$[P]\in U$. Cohomology and base change therefore identifies
the fiber of $\mathcal V_\rho$ at such a point with
$H^1(C,E_\rho(P))$, and Riemann--Roch gives
$\rk\mathcal V_\rho=(g-1)r$.

These sheaves differ from the Poincar\'e bundles and their
associated bundles studied in \cite{BBN97}: they are constructed
from first higher direct images along $C$, rather than being
universal bundles on the product with $C$ or their restrictions
to a fixed point of $C$.

\begin{theorem}\label{thm:main}
Let $g\geq3$, and let $(\rho,V)$ be a representation of
$G$ such that $V^G=0$, $\rho|_{Z(G)}=1$, and $m_\rho\leq g-1$.
For either choice of $M$, if $(\rho,V)$ is irreducible, then the sheaf $\mathcal V_\rho$ is slope stable
with respect to every ample Cartier polarization on $M$.
\end{theorem}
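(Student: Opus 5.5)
The strategy is Hwang's: restrict to a covering family of rational curves (Hecke-type curves) through a general point and use their tangent directions. For a general $[P]\in U$ we have $H^0(C,E_\rho(P))=0$ by Theorem~\ref{thm:interp}, so $\mathcal V_\rho$ is locally free near $[P]$ with fiber $H^1(C,E_\rho(P))\simeq H^0(C,E_\rho(P)^\vee\otimes K_C)^\vee$. A saturated subsheaf $\mathcal F\subset\mathcal V_\rho$ that destabilizes with respect to an ample Cartier $H$ is locally free in codimension one. Since $M$ has Picard number one on the relevant components, we can compare degrees of $\det\mathcal F$ and $\det\mathcal V_\rho$ by restricting to a minimal rational curve family. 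This family must be unsplit and cover the smooth part through a general point, with members avoiding $M\setminus U$ because $\codim\geq2$ there.

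First I construct the rational curves. For a long root $\SL_2\subset G_i$ (highest long root), a point $x\in C$ and a general $P$, the Hecke modifications at $x$ along the long-root direction give a $\Pp^1$ through $[P]$. Call it $\ell_{x,\xi}$, parametrized by $x\in C$ and a point $\xi$ of the minimal nilpotent orbit projectivization. On $\ell\simeq\Pp^1$ the family $P_t$ is obtained by twisting along the cocharacter of the $\SL_2$. Hence $E_\rho(P_t)$ decomposes by $\SL_2$-weights, and the weight-$k$ piece shifts its degree by $k$ at $x$. Pushing forward gives
\[
\mathcal V_\rho|_{\ell}\simeq\bigoplus_{k}\OO_{\Pp^1}(a_k)^{\oplus b_k},
\]
with $a_k$ determined by the weights $|k|\leq m_\rho$. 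Here $m_\rho\leq g-1$ ensures that the pieces remain cohomologically controlled: $H^0$ vanishes along the whole curve, so $R^1p_*$ commutes with base change and the splitting is honest. I would verify this by a local computation on the elementary transformation together with general vanishing for bundles of degree shifted by at most $g-1$. From the splitting, the positive part $\mathcal V^+_{x,\xi}\subset H^1(C,E_\rho(P))$ is identified with the image of the evaluation at $x$ on the positive-weight subspace $V_{>0}(\xi)\subset V$ of the $\SL_2$ attached to $\xi$.

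Next comes the standard inequality. If $\mathcal F$ is saturated of rank $s$ with $\mu_H(\mathcal F)\geq\mu_H(\mathcal V_\rho)$, then restricting to a general member of the covering family gives
\[
\deg\mathcal F|_\ell/s\geq\deg\mathcal V_\rho|_\ell/r(g-1).
\]
Since $\mathcal F|_\ell$ is a subsheaf of the split bundle, this forces the fiber $F=\mathcal F_{[P]}$ to meet the positive parts $\mathcal V^+_{x,\xi}$ proportionally more than average, for general $(x,\xi)$. Equivalently, the subspace $F^\perp\subset H^0(C,E_\rho(P)^\vee\otimes K_C)$ has smaller-than-average image under the evaluation-projections
\[
H^0(K_C\otimes E_\rho(P)^\vee)\to (E_\rho(P)^\vee\otimes K_C)_x\to V_{>0}(\xi)^\vee,
\]
taken over all $x,\xi$.

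The main obstacle is the linear-algebra step: showing that any proper nonzero $F$ violates this. The $G$-orbit of $\xi$ spans $\gfr$, so by irreducibility of $V$ the subspaces $V_{>0}(\xi)$, as $\xi$ varies in the minimal orbit, span $V$ with no invariant proper configuration. Varying $x$ over $C$ then spreads sections, using that $E_\rho(P)^\vee\otimes K_C$ is generated enough for general $P$. I would first try a monodromy/irreducibility argument. The incidence variety of pairs $(x,\xi)$ is irreducible, and the set where $\dim(F\cap\mathcal V^+_{x,\xi})$ is large is closed. So a destabilizing $F$ would produce a nonzero sub-$\gfr$-module of $V$, or a proper subbundle of $E_\rho(P)$ invariant under holonomy, which contradicts irreducibility and the stability of $P$. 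Strictness in the equality case is where irreducibility, not mere semisimplicity, is used. Finally, independence of the polarization follows because each component of $M$ relevant to $\mathcal F$ has Picard number one up to the abelian part, which is fixed in $M_{G,Q}$. For $M_G^\tau$, I would reduce via the finite cover by $M_{G,Q}\times$ (abelian factor), along which $\mathcal V_\rho$ is pulled back from the first factor.
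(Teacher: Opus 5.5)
Your curves (highest-root Hecke curves through a general $[P]$), your use of $m_\rho\le g-1$ with Theorem~\ref{thm:interp} to get $H^0=0$ along them, and your reduction for $M_G^\tau(C)$ through the finite \etale{} cover all match the paper. There is, however, a genuine gap in the step you flag as the main obstacle, and that step carries most of the proof.

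The positive part is not just the evaluation image of $V_{>0}(\xi)$. The splitting is $\bigoplus_{j=1}^{m_i}\OO_{\Pp^1}(j)^{\oplus d_{i,j}}$ plus a trivial summand, where $d_{i,j}=\rk\nu_i^j$. Its fiber filtration at $0$ is $T_{i,j}=\delta_0\bigl(\bigoplus_p z^{-p}\im\nu_i^{p+j-1}\bigr)$, which involves poles of order up to $m_i$. Paired against $Z=\Ann(F)$, this sees the Taylor coefficients $s_0,\dots,s_{m_i-1}$ of sections at $x$, not only their values. Write $b_i=\deg f_i^*(\mathcal V_\rho/\FF)$, $q=\rk(\mathcal V_\rho/\FF)$ and $e_i=\sum_j j\,d_{i,j}$. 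The estimate you get from values alone is $b_i\ge\frac{a_1}{r}\sum_j d_{i,j}$, where $a_1$ is the rank of evaluation at $x$ on $Z$. When $a_1=q/(g-1)$, this falls short of the required $\frac{q}{(g-1)r}e_i$ as soon as $m_i\ge2$.

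Moreover, the monodromy/closed-locus argument cannot produce a numerical inequality. Destabilization only says that a \emph{generic} intersection dimension exceeds a threshold, so there is no special locus to exploit. Also, $G$ does not act on $H^1(C,E_\rho(P))$, and nothing like holonomy relates the fibers of $E_\rho(P)$ at different $x$. The missing ingredients are:
\begin{enumerate}
\item[(a)] The filtration $Z(-px)$ at a general $x$. Its increments $a_p$ are non-increasing and sum to $q$ because $Z(-(g-1)x)=0$ (Lemma~\ref{lem:jets}). Hence $\sum_{p\le k}a_p\ge kq/(g-1)$ for $k\le m_i\le g-1$. This is a second use of $m_\rho\le g-1$ that your plan does not see.
\item[(b)] The block-triangular structure of the residue pairing. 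It transports the problem from $H^1$ to graded pairings between $U_p\subset V^\vee$, with $\dim U_p=a_p$, and $\rho(h)\im\nu_i^{p+j-1}$, where $G_{\mathrm{ad}}$ does act.
\item[(c)] The quantitative bound $\dim(A\cap\rho(h)B)\le\dim A\dim B/\dim V$ for irreducible $V$ and general $h$. It is proved by supermodularity and a maximal-ratio subspace that turns out to be invariant, with strictness for almost simple factors (Lemma~\ref{lem:one-factor-strict}).
\item[(d)] The bound $b_i\ge\sum_j t_{i,j}$, which comes from global generation of the quotient on the curve.
\end{enumerate}

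Your polarization step is also wrong as stated. $\Cl(M)_{\mathbb Q}\simeq\mathbb Q^s$, where $s$ is the number of simple factors of $G_{\mathrm{ad}}$. So $M$ need not have Picard number one even with $Q$ fixed, and $\mu_H(\FF)\ge\mu_H(\mathcal V_\rho)$ yields no inequality on any single family of curves. To cover every ample $H$ at once, you need:
\begin{enumerate}
\item[(i)] one highest-root family for each factor;
\item[(ii)] classes $\Theta_i$ with $\deg f_i^*\Theta_j=\delta_{ij}$;
\item[(iii)] the inequality $b_i/q\ge e_i/((g-1)r)$ for \emph{every} $i$;
\item[(iv)] strictness for at least one $i$.
\end{enumerate}
For a given subspace, equality may hold in some factors, so strictness needs a simultaneous version of the strict intersection bound for the subspaces $\im\nu_i$ in $V=\bigotimes_iV_i$ (Lemma~\ref{lem:simultaneous-strictness}). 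The paper applies it to $U_1^\perp$, after showing that equality for all $i$ forces $a_1=\cdots=a_{g-1}=q/(g-1)<r$.
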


Let $\Ad:G\to\GL(\Lie(G))$ denote the adjoint representation.
On $M=M_{G,Q}^{\tau}(C)$, take $V=\Lie(G_{\mathrm{der}})$ with
$\rho(h)=\Ad(h)|_{\Lie(G_{\mathrm{der}})}$ for $h\in G$.
On $M=M_G^\tau(C)$ with $G$ semisimple, take
$(\rho,V)=(\Ad,\Lie(G))$.
In either case, for $g\geq3$, deformation theory identifies
$R^1p_*\EE_\rho$ with $T_U$, and its reflexive extension is
\[
 \mathcal V_\rho\simeq T_M
 =\mathcal Hom_{\OO_M}(\Omega_M^1,\OO_M),
\]
by Proposition~\ref{prop:cohomology-sheaf}.

For the conjugation action of $G$ on $\Lie(G_{\mathrm{der}})$,
the restriction to each highest-root $\SL_2$ has largest highest
weight $2$. So $m_\rho=2$ in this case. When $G_{\mathrm{der}}$ is almost simple, this representation is
irreducible, so Theorem~\ref{thm:main} gives the following result
for $g\geq3$. The vector bundle case in genus $2$ is proved separately
in Section~\ref{sec:examples}.

\begin{theorem}\label{cor:tangent}\label{thm:tangent}
The tangent sheaf $T_M$ is slope stable with respect to every ample Cartier polarization in each of the following cases:
\begin{itemize}
\item[\textup{(i)}]
$g\geq3$ and $M=M_{G,Q}^{\tau}(C)$, where $G$ is connected reductive and $G_{\mathrm{der}}$ is almost simple. Recall that if $G$ is almost simple, then $M_{G,Q}^{\tau}(C)=M_{G}^{\tau}(C)$.
\item[\textup{(ii)}]
$g\geq2$ and $M=\SU_C(n,L)$, where $n\geq2$ and $L$ is any line bundle
on $C$.
\end{itemize}
\end{theorem}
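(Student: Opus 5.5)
For part (i) we apply Theorem~\ref{thm:main} to $V=\Lie(G_{\mathrm{der}})$ with $\rho(h)=\Ad(h)|_{\Lie(G_{\mathrm{der}})}$, and then identify $\mathcal V_\rho$ with $T_M$ via Proposition~\ref{prop:cohomology-sheaf}. The hypotheses of Theorem~\ref{thm:main} are checked as follows.
\begin{enumerate}
\item The centre $Z(G)$ acts trivially under conjugation.
\item $V^G=\Lie(G_{\mathrm{der}})^{G_{\mathrm{der}}}=\Lie(Z(G_{\mathrm{der}}))=0$, since $G_{\mathrm{der}}$ is semisimple.
\item $V$ is irreducible because $G_{\mathrm{der}}$ is almost simple, so $\Lie(G_{\mathrm{der}})$ is a simple Lie algebra. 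Here $G_{\mathrm{ad}}$ has exactly one factor $G_1$, and $V$ is its adjoint representation.
\item $m_\rho=2\le g-1$ for $g\ge3$. Restricted to the highest-root $\SL_2$, the adjoint representation decomposes as $\mathfrak{sl}_2$ (weight $2$), together with copies of the standard representation (weight $1$) from roots $\beta$ with $\langle\beta,\theta^\vee\rangle=\pm1$, and trivial pieces. The weight $\pm1$ bound holds because $\theta$ is the highest long root, so $\langle\beta,\theta^\vee\rangle\in\{0,\pm1\}$ for every root $\beta\ne\pm\theta$.
\end{enumerate}
For $G$ almost simple, $G^{\mathrm{ab}}=1$, so the two versions of $M$ agree.

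The identification $\mathcal V_\rho\simeq T_M$ runs through $U$. On $U$, deformation theory gives $T_U\simeq R^1p_*\EE_\rho$. Points of $U$ have automorphism group $Z(G)$ in the adjoint factors, so $H^0(C,\operatorname{ad}P)=\Lie Z(G)$, and on the $Q$-fixed version the $G^{\mathrm{ab}}$-directions are frozen. Hence $R^1p_*\EE_\rho$ is locally free and equals $T_U$. Since $T_M$ is reflexive and $M$ is normal with $\codim(M\setminus U)\ge2$, the two reflexive sheaves $T_M$ and $\mathcal V_\rho$ coincide. This is exactly Proposition~\ref{prop:cohomology-sheaf}, which I would cite.

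Part (ii) with $g\ge3$ follows from (i) applied to $G=\GL_n$, $G_{\mathrm{der}}=\SL_n$ (almost simple) and $Q=L$, since $M_{\GL_n,L}^d(C)=\SU_C(n,L)$. The remaining case, and the main obstacle, is $g=2$, where $m_\rho=2>g-1$ and Theorem~\ref{thm:main} does not apply. There I would split by cases.
\begin{enumerate}
\item For $n=2$, $\SU_C(2,L)$ is either $\Pp^3$ (even degree, Narasimhan--Ramanan) or a smooth intersection of two quadrics in $\Pp^5$ (odd degree). Both have Picard number one and stable tangent bundle. Stability of $T_{\Pp^3}$ is classical. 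For the complete intersection of two quadrics, stability follows from Hwang's Hecke-curve argument or from known results on Fano complete intersections.
\item For $n\ge3$ and $g=2$, I would rerun the mechanism of Theorem~\ref{thm:main}, namely restriction to Hecke curves and the irreducibility of the variety of minimal rational tangents. The plan is to show directly that the tangent spaces of Hecke curves through a general point span $T_M$ and that the VMRT, which is $\Pp(E)\times_C\Pp(E^*)$ embedded in $\Pp H^1(\End_0 E)$, is irreducible and nondegenerate.
\end{enumerate}
Since $\Pic(M)=\mathbb Z$, there is only one polarization up to scaling. Stability then reduces to ruling out a saturated subsheaf $\FF\subset T_M$ with $\deg\FF/\rk\FF\ge\deg T_M/\rk T_M$.

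To rule this out, restrict to a general Hecke curve $\ell$. There $T_M|_\ell$ splits as $\OO(2)\oplus\OO(1)^{p}\oplus\OO^{q}$, with $p=\dim\text{VMRT}$. A destabilizing $\FF$ would have to contain the $\OO(2)$ direction together with a proportion of the $\OO(1)$'s exceeding $p/\dim M$. By Hwang's argument, the tangent directions it contains would then define a proper linear section of the cone over the VMRT containing its tangent spaces. Nondegeneracy and irreducibility of the VMRT rule this out. The hard part is the numerical inequality in the low-genus case $g=2$, and I would handle it by this explicit VMRT computation.
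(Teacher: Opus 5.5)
Your treatment of (i), and of (ii) for $g\ge3$, is the paper's. You apply Theorem~\ref{thm:main} to the conjugation action on $\Lie(G_{\mathrm{der}})$ with $m_\rho=2$, then use Proposition~\ref{prop:cohomology-sheaf}. In genus two, $n=2$ with $d$ even ($\Pp^3$) also matches. For $d$ odd you cite the intersection of two quadrics, whereas the paper handles it in its general genus-two argument; either is acceptable.

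The gap is genus two with $n\ge3$. Your central claim, that a destabilizing $\FF$ must contain the $\OO(2)$ direction of a general Hecke curve $\ell$, is false.

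\begin{itemize}
\item The slope of $T_M|_\ell=\OO(2)\oplus\OO(1)^{2n-2}\oplus\OO^{n(n-2)}$ is $2n/(n^2-1)<1$. So a rank-one subsheaf that meets only an $\OO(1)$ summand already destabilizes.
\item Concretely, if $n\mid d$ then $\deg f^*\OO_M(H)=n/\delta=1$, and nothing in your plan excludes $\OO_M(H)\hookrightarrow T_M$.
\item Irreducibility and nondegeneracy of the VMRT only exclude the case where $\FF_x$ contains the tangent direction. In the other case you would need an upper bound on $\dim(\FF_x\cap\hat T_\alpha\mathcal C_x)$ for general $\alpha$, uniform over all subspaces $\FF_x$. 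This does not follow from those two properties.
\item The paper's sharper local estimate, via generic translates (Lemma~\ref{lem:one-factor-strict}), gives only $\deg f^*(T_M/\FF)>(2n-2)\rk(T_M/\FF)/(n^2-1)+1$. Combined with $\mu_H(\FF)\ge\mu_H(T_M)$ this yields just $0<\deg f^*\FF<n$, and, for instance, $\rk\FF=\deg f^*\FF=1$ survives.
\end{itemize}

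The paper closes this with a global, arithmetic input that your proposal lacks. Equal slopes are excluded by $\gcd(n,n^2-1)=1$. For the maximal destabilizing subsheaf, uniqueness of the Harder--Narasimhan filtration gives $\det\FF=\OO_M(aH)$ a $\Pic^0(C)[n]$-linearization. The commutator pairing $e_n^{-an/\delta}$ of this line bundle must then be trivial, which forces $\delta\mid a$ and hence $\deg f^*\FF=an/\delta\in n\mathbb Z$, a contradiction.

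Two further steps are asserted rather than proved.
\begin{itemize}
\item Since $m_\rho=2>g-1$, the splitting type of $f^*T_M$ in genus two does not follow from Theorem~\ref{thm:interp}. It requires $H^0(C,E^+)=0$. The paper obtains this from $E^+\subset(\End_0E')(x)$ for an elementary modification $E'$ with $\det E'\simeq L(x)$. It also needs a separate codimension estimate to keep the Hecke curves inside $M^s$.
\item The VMRT is the incidence locus $\{\phi(v)=0\}\subset\Pp(E)\times_C\Pp(E^*)$, of dimension $2n-2$, not the whole fibre product.
\end{itemize}
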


Our proof uses the geometry of Hecke curves, motivated by the works of
Hwang, Hwang--Ramanan, and Sun \cite{Hwa00,Hwa02,HR04,Sun05}.
The Hecke modifications used to construct these curves were
studied by Narasimhan--Ramanan \cite{NR78}; the
use of rational curves in tangent bundle stability is also central
to Hwang's work \cite{Hwa98}.
We explain the idea for $M=M_{G,Q}^{\tau}(C)$. Let
$0\ne\mathcal F\subsetneq\mathcal V_\rho$ be a saturated subsheaf
of smaller rank. Choose a general $[P]\in U$ and use Serre
duality to set
\[
 \begin{gathered}
 Z=\Ann(\mathcal F|_{[P]})
   \subset H^0(C,E_\rho(P)^\vee\otimes K_C), \qquad
 Z^\vee=(\mathcal V_\rho/\mathcal F)|_{[P]},\qquad
 \dim Z=\rk(\mathcal V_\rho/\mathcal F).
 \end{gathered}
\]
After fixing $P$ and $Z$, choose a general $x\in C$ and a complete
highest-root Hecke curve $f:\Pp^1\to U$, with modification point
$x$ and $f(0)=[P]$, such that $H^0(C,E_\rho(P'))=0$ for every
$[P']\in f(\Pp^1)$ and $\mathcal F$, $\mathcal V_\rho$, and
$\mathcal V_\rho/\mathcal F$ are locally free near its image;
see Proposition~\ref{prop:complete-hecke} and
Lemma~\ref{lem:common-containing-bundle}.
We study $c_1(\mathcal V_\rho)$ and
$c_1(\mathcal V_\rho/\mathcal F)$ by pulling back along $f$.
The argument proceeds as follows.

\begin{itemize}[leftmargin=4.5em]
\item[Step 1.]
The splitting type of $f^*\mathcal V_\rho$ can be computed from the
representation $V$. This gives $\deg f^*\mathcal V_\rho$ and shows
that $f^*\mathcal V_\rho$ is globally generated; see
Proposition~\ref{prop:splitting}.

\item[Step 2.]
Thus $f^*(\mathcal V_\rho/\mathcal F)$ is also globally generated.
The positive degree subbundles of $f^*\mathcal V_\rho$ induce on this
quotient the filtration given by
\[
 \im\!\left(
 H^0\bigl(\Pp^1,(f^*\mathcal V_\rho)(-j)\bigr)
 \otimes\OO_{\Pp^1}(j)
 \longrightarrow f^*(\mathcal V_\rho/\mathcal F)
 \right),\qquad j\geq1.
\]

\item[Step 3.]
The images of the corresponding maps on the fiber at $0\in\Pp^1$
give a filtration of $Z^\vee$. The sum of their dimensions gives
the lower bound for $\deg f^*(\mathcal V_\rho/\mathcal F)$ in
\eqref{eq:factor-degree-bound}.

\item[Step 4.]
Finally, we regard $Z$ as a space of global sections on $C$ and
consider its filtration $Z(-px)$, $p\geq0$, by vanishing at $x$.
We relate it to the filtration in the preceding step and apply the
two representation-theoretic Lemmas~\ref{lem:one-factor-strict}
and~\ref{lem:simultaneous-strictness} to obtain the further lower
bound in \eqref{eq:factor-quotient-estimate}, which proves stability.
\end{itemize}

For $g=2$ and $M=\SU_C(n,L)$, we prove tangent-sheaf stability
in Section~\ref{sec:examples}, using elementary modifications
and the action of $\Pic^0(C)[n]$ by tensor product to rule out a
destabilizing subsheaf.

The Hecke calculation requires cohomological vanishing at the
modification point. We record the following theorem, which also has an interpretation in terms of theta
divisors. For a degree zero associated bundle, consider
\[
 \Theta_{E_\rho(P)}=
 \{L'\in\Pic^{g-1}(C):H^0(C,E_\rho(P)\otimes L')\ne0\}.
\]
When this locus is not the entire $\Pic^{g-1}(C)$, it is the theta divisor of $E_\rho(P)$; see \cite[\S1.3 and \S2.1]{Bea06}. The theorem below shows that, for general $P$, the image of
\[
 C\longrightarrow\Pic^{g-1}(C),\qquad  x\longmapsto\OO_C((g-1)x)
\]
meets $\Theta_{E_\rho(P)}$ in only finitely many points.

\begin{theorem}[Theorem \ref{thm:vanishing}]\label{thm:interp}
Let $g\geq2$, let $G$ be connected reductive, and let $(\rho,V)$ be a finite-dimensional algebraic representation satisfying $V^G=0$ and $ \rho|_{Z(G)}=1$. For every $\tau$ and $Q$ as above, a general pair $(P,x)$ in either $\MM_G^\tau\times C$ or $\MM_{G,Q}^{\tau}\times C$ satisfies
\begin{equation}\label{eq:interp}
 H^0\bigl(C,E_\rho(P)((g-1)x)\bigr)  =H^1\bigl(C,E_\rho(P)((g-1)x)\bigr)=0.
\end{equation}
\end{theorem}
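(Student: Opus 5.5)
The plan is to reduce \eqref{eq:interp} to finding a single example, and then to build that example from line bundles. First, since $\rho|_{Z(G)}=1$, the representation factors through $G_{\mathrm{ad}}$, which is semisimple. Hence $\det\rho$ is trivial on $G$, and $E_\rho(P)$ has degree $0$ and rank $r$ for every $P$. By Riemann--Roch,
\[
 \chi\bigl(C,E_\rho(P)((g-1)x)\bigr)=r(g-1)+r(1-g)=0,
\]
so $H^0=0$ if and only if $H^1=0$, and it suffices to prove the $H^0$ vanishing. The condition $H^0\bigl(C,E_\rho(P)((g-1)x)\bigr)=0$ is open on any family of pairs $(P,x)$, by upper semicontinuity. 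Both $\MM_G^\tau$ and $\MM_{G,Q}^\tau$ are irreducible, and the map to the stack of $G_{\mathrm{ad}}$-bundles of the induced type is smooth and surjective, since $G\to G_{\mathrm{ad}}$ has central kernel and the obstruction to lifting lives in an $H^2$ on a curve, which vanishes. It therefore suffices to exhibit one $G_{\mathrm{ad}}$-bundle $P$ of the induced topological type and one point $x$ with the required vanishing. Since openness then gives vanishing for general $x$ as well, the statement is about general pairs.

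For the example, I would first prove a lemma about line bundles. Let $\Theta\subset\Pic^{g-1}(C)$ be the theta divisor, and let $N\in\Pic^0(C)$ be general. For $g\geq2$ the map $C\to\Pic^{g-1}(C)$, $x\mapsto\OO_C((g-1)x)$, is nonconstant. Its image is not contained in the translate $\Theta-N$ for general $N$, because otherwise the image would lie in the intersection of all translates of $\Theta$, which is empty. Hence $H^0(C,N((g-1)x))=0$ for general $x$, and the same holds simultaneously for finitely many general $N_1,\dots,N_k$. The target is therefore a $G_{\mathrm{ad}}$-bundle $P$ such that $E_\rho(P)$ is, at least up to a filtration, built out of general degree-$0$ line bundles, with no trivial summand.

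A first attempt is a reduction of $P$ to a maximal torus $T$, with general degree-$0$ line bundles for the characters; the topological type $\tau$ can be realized by such a reduction. The nonzero weights $\mu$ of $V$ then contribute line bundles $L_\mu$, which are general as long as the degree-$0$ part is chosen generically. The zero-weight space $V_0$, however, contributes the trivial bundle $V_0\otimes\OO_C$, and $h^0(\OO_C((g-1)x))\geq1$. This is the main obstacle. To remove the invariant part, I would replace $T$ by its normalizer $N(T)$. Choose an étale cyclic cover $C'\to C$ of degree $h$, the order of a Coxeter element $w\in W$; such a cover exists because $H_1(C,\mathbb Z)$ surjects onto $\mathbb Z/h$. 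Form the $N(T)$-bundle twisted by this cover via $w$, together with a general degree-$0$ $T$-part. Then $E_\rho(P)$ is a sum of pushforwards of line bundles from $C'$, and the weights assemble into $w$-orbits. The pushforward of a general line bundle on $C'$ is a sum of general-type pieces, and on $V_0$ the trivial summand is replaced by $\bigoplus_k\eta^{k}\otimes V_0[\zeta^k]$, where $\eta$ is the $h$-torsion bundle and $V_0[\zeta^k]$ is the $\zeta^k$-eigenspace of $w$. The remaining difficulty is the $w$-fixed part $V_0^{w}$, together with keeping $\eta^k((g-1)x)$ free of sections; the latter is handled by the lemma, since only finitely many torsion bundles appear.

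If $V_0^w\neq0$, I would first use several independent loops in $H_1(C)$, which is possible because $g\geq2$. Map them to a finite subgroup of $N(T)$ whose image in $W$ has no common fixed vectors on $V_0$ except $V_0^{W}$. This holds for instance if the chosen elements of $W$ generate $W$, and $W$ is generated by two elements. Then $V_0^W\subset V^G$ after accounting for the root $\SL_2$'s, which I would argue needs a supplementary deformation. Alternatively, if the zero-weight obstruction persists, I would deform away from the $N(T)$-locus: the space of sections at the special bundle then only involves $V_0^W\otimes\OO_C$, and a first-order deformation in the directions of the root spaces $\gfr_\alpha$ should kill these sections, because $V^G=0$ forces each vector of $V_0^W$ to be moved by some root operator. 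Making this last deformation argument precise is where I expect the real work to lie.
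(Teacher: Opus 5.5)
Your preliminary reductions are sound and agree with the paper. Since $\chi=0$, the $H^0$ and $H^1$ vanishings are equivalent; semicontinuity and irreducibility reduce the statement to a single example; and that example can be constructed on the $G_{\mathrm{ad}}$ side. Your overall architecture is also the paper's: a special bundle whose monodromy comes from a Coxeter element in $N(T)$, followed by a deformation that kills the surviving invariant sections.

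The gap is in the last step. It is the real content of the theorem, you leave it open, and the mechanism you suggest for it cannot work.

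First, the invariant part cannot be removed by a cleverer choice of monodromy in $W$, because the inclusion $V_0^W\subset V^G$ is false. Take $G=\PGL_2$ acting on $V=\Sym^4\C^2\otimes{\det}^{-2}$. The zero-weight vector $\mathbf e_0^2\mathbf e_1^2$ is fixed by $W$, yet $V^G=0$. So the sections $V_0^W\otimes H^0(C,\OO_C((g-1)x))$ genuinely survive at your special bundle.

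Second, no first-order deformation kills these sections. The first-order obstruction to lifting a section $v$ is the cup product of the Kodaira--Spencer class with $v$.
\begin{itemize}
\item The Cartan components of that class act by zero on $V_0$.
\item The root components $d\rho(e_\alpha)v$ land in the nonzero-weight part of $E_\rho(P_0)$. In the paper's notation, they land in the summands $V_\chi\otimes\eta_\chi$ with $\chi\ne1$, since $\alpha(a)\ne1$.
\end{itemize}
You have arranged that this nonzero-weight part has vanishing cohomology after twisting by $(g-1)x$. So the obstruction map is identically zero on the invariant sections, whatever $V^G=0$ says about individual root operators.

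What is missing is a second-order argument, which is how the paper proceeds.
\begin{itemize}
\item It dualizes to $L=K_C(-(g-1)x)$, which has $h^0(L)=h^1(L)=1$, and uses a generator $s$ with a zero $p\ne x$.
\item It induces $P_0$ from a finite abelian group $A=\langle a,b\rangle$. Here $a$ is the image of a Coxeter lift $n$, and $b$ is the image of some $t_0$ with $[n,t_0]=\zeta$. Steinberg's surjectivity of $t\mapsto w(t)t^{-1}$ is what realizes the topological type; your $N(T)$ construction does not address this point.
\item With this choice, $H^0(E_\rho(P_0)\otimes L)=sV^A$.
\item It deforms at $p$ by a palindromic product of unipotent exponentials, so that $\rho(\gamma)\equiv I+\eps uA_X+\tfrac12\eps^2u^2A_X^2\pmod{\eps^3}$. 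The Laurent polynomial $u$ is chosen with $su$ regular at $p$ but $\Res_p(ss'u^2)\ne0$.
\item Then every section lifts to first order, uniquely modulo $sV^A$.
\item The second-order obstruction is $\tfrac12\,\Pi A_X^2v\otimes[su^2]$. This is nonzero provided $\Pi\,d\rho(X)^2|_{V^A}$ is invertible.
\end{itemize}

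Producing such an $X$ is exactly where $V^G=0$ enters. For $Y_0=\sum_\alpha e_\alpha$, the zero-weight part of $d\rho(Y_0)^2$ on $V_0$ is the Casimir operator $\Omega_V$. This is invertible because $\ker\Omega_V=V^{\gfr}=0$. The paper combines this with a generic cocharacter to handle $V^A\cap V_{\ne0}$, and kills the cross terms by the $\rho(a)$-eigenvalue argument. This is the precise, quadratic form of your intuition that root operators move invariant vectors.

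There are also two smaller issues.
\begin{itemize}
\item Your theta-divisor lemma covers only general $N\in\Pic^0(C)$, not the specific torsion bundles $\eta^k$. For those, use the generic vanishing sequence of $|K_C\otimes\eta|$ instead.
\item The line bundles on $C'$ that appear in your pushforwards are constrained to a Prym-type locus, so their genericity is not automatic.
\end{itemize}
The paper's finite abelian monodromy avoids both of these issues.
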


The genus $2$ result for $\SU_C(n,L)$ shows that
$m_\rho\leq g-1$ is not necessary in general: the adjoint
representation has $m_\rho=2>g-1$ when $g=2$. This motivates the
following question, where $\mathcal V_\rho$ is defined by
\eqref{eq:Vr-intro} without a bound on $m_\rho$. We retain $g\geq3$ to ensure that
$\codim_M(M\setminus U)\geq2$.

\begin{question}\label{question:all-representations}
Let $g\geq3$, and let $M$ be either of the two moduli spaces above.
For a nontrivial irreducible representation $(\rho,V)$ of $G$
trivial on $Z(G)$, is $\mathcal V_\rho$ slope stable with respect to
every ample Cartier polarization, without a bound on $m_\rho$?
\end{question}

\noindent\textbf{Organization of the paper.}
Section~\ref{sec:moduli} describes the moduli spaces, constructs
the universal associated bundle on $U\times C$, and defines
$\mathcal V_\rho$ on both moduli spaces.
Section~\ref{sec:interpolation} proves Theorem~\ref{thm:interp} and
the monotonicity of vanishing increments.
Section~\ref{sec:hecke} constructs complete Hecke curves on
$M_{G,Q}^{\tau}(C)$, computes the restriction of
$\mathcal V_\rho$ to these curves, and determines its first
Chern class on this moduli space.
Section~\ref{sec:slope} proves Theorem~\ref{thm:main}, first for
$M=M_{G,Q}^{\tau}(C)$ and then for $M=M_G^\tau(C)$; its final
subsection contains the two representation-theoretic lemmas
used in the proof.
Section~\ref{sec:examples} proves tangent-sheaf stability in
genus two for fixed-determinant vector bundles.

\subsection*{Acknowledgements}

The second named author is grateful to Xucheng Zhang for carefully reading an earlier version of this paper and for his valuable comments.

\smallskip\noindent
\textbf{AI disclosure.}
AI tools were used in the course of this research to explore proof strategies, assist in the development and refinement of mathematical arguments, identify relevant references, and improve the exposition. All AI-assisted material included in the paper was checked by the authors, who take full responsibility for its contents.

\section{Moduli spaces and cohomology sheaves}\label{sec:moduli}

\subsection{The moduli space \texorpdfstring{$M_{G,Q}^{\tau}(C)$}{M(G,Q,tau)(C)} and the adjoint quotient}

Keep the notation of the introduction, and assume $g\geq3$ unless
otherwise stated. The stack $\MM_{G,Q}^{\tau}$ parametrizes pairs
$(P,\alpha)$, where $P$ has topological type $\tau$ and
\[
 \alpha:P\times^G G^{\mathrm{ab}}\xrightarrow{\sim}Q.
\]
Its semistable open substack has coarse
moduli space $M_{G,Q}^{\tau}(C)$. We write
\[
 \MM=\MM_{G,Q}^{\tau},\qquad M=M_{G,Q}^{\tau}(C).
\]
For semisimple $G$, we omit $Q$ and use
$\MM_G^\tau=\Bun_G^\tau(C)$.

Recall that $G_{\mathrm{ad}}=\prod_{i=1}^sG_i$, with each $G_i$
adjoint and almost simple. Since $\rho|_{Z(G)}=1$, the representation
factors through $G_{\mathrm{ad}}$; we use the same notation $\rho$
for this factorization. If $V$ is irreducible, choose irreducible
representations $\rho_i:G_i\to\GL(V_i)$ and a
$G_{\mathrm{ad}}$-equivariant identification
\begin{equation}\label{eq:tensor-decomp-intro}
 V\simeq\bigotimes_{i=1}^sV_i,\qquad
 \rho(g_1,\ldots,g_s)=\bigotimes_{i=1}^s\rho_i(g_i).
\end{equation}
A factor $V_i$ is allowed to be the one-dimensional trivial
representation. Irreducibility of $V$ is not required for the other
constructions in this section.

For $(P,\alpha)\in\MM_{G,Q}^{\tau}$, the adjoint action of $G$
preserves $\Lie(G_{\mathrm{der}})$. We write
\[
 E_{\Ad}(P):=P\times^G\Lie(G_{\mathrm{der}})
\]
for the bundle associated with this action.

Let $\tau_i\in\pi_1(G_i)$ be the type induced by $\tau$. Extension of
structure group gives morphisms
\begin{equation}\label{eq:adjoint-map}
 \Phi:\MM\longrightarrow\prod_{i=1}^s\MM_{G_i}^{\tau_i},
 \qquad (P,\alpha)\longmapsto(P\times^G G_i)_{i=1}^s,
\end{equation}
and, on semistable coarse moduli spaces,
\begin{equation}\label{eq:coarse-adjoint-map}
 \phi:M\longrightarrow\prod_{i=1}^sM_{G_i}^{\tau_i}(C).
\end{equation}
Let $U\subset M$ be the inverse image of the product of the regularly
stable loci. Since the groups $G_i$ are adjoint, each corresponding
$G_i$-bundle has trivial automorphism group. Write
$\mathfrak U\subset\MM$ for the corresponding open substack.

\begin{proposition}\label{prop:inputs}
The stack $\MM$ is smooth and irreducible, and $M$ is irreducible,
normal and projective, with
\[
 \dim M=(g-1)\dim G_{\mathrm{der}}.
\]
The morphism $\Phi$ is surjective on the specified components, and
$\phi$ is finite and surjective. The open set $U$ is smooth,
$\mathfrak U\to U$ is a gerbe with band $Z(G_{\mathrm{der}})$, and
\[
 \codim_M(M\setminus U)\geq2,
 \qquad \codim_{\MM}(\MM\setminus\mathfrak U)\geq2.
\]
There are nef rational Cartier classes
$\Theta_1,\ldots,\Theta_s$ forming a basis of $\Cl(M)_{\mathbb Q}$.
For every ample Cartier divisor $H$ on $M$,
\begin{equation}\label{eq:theta-positive}
 \Theta_i\cdot H^{\dim M-1}>0.
\end{equation}
\end{proposition}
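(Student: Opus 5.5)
The plan is to assemble the proposition from standard inputs, reducing each claim about $\MM=\MM_{G,Q}^{\tau}$ and $M$ to known results on $\Bun_G$ and on moduli of semisimple groups. \textbf{Stack properties.} First, $\MM$ is the fiber of $\Bun_G^\tau\to\Bun_{G^{\mathrm{ab}}}^{\tau^{\mathrm{ab}}}$ over $Q$, rigidified by $\alpha$.

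\begin{itemize}
\item For smoothness, the obstruction space $H^2(C,E_{\Ad}(P))$ vanishes because $C$ is a curve. The relative tangent complex is $R\Gamma(C,E_{\Ad}(P))$, and its Euler characteristic gives dimension $(g-1)\dim G_{\mathrm{der}}$.
\item Irreducibility of $\MM$ follows from connectedness of $\Bun_{G_{\mathrm{der}}}$-type components (Drinfeld--Simpson, Hoffmann). Concretely, $\MM$ is a quotient of a $\Bun_{\tilde G}$-type stack for a $z$-extension, or equivalently $\Bun_{G_{\mathrm{der}}}^{\tau'}\times^{}(\text{torsor})$. One then uses smoothness plus connectedness.
\item Irreducibility, normality and projectivity of $M$ follow from Ramanathan's GIT construction as a good quotient of a smooth irreducible parameter scheme.
\item Since generic bundles are stable with finite automorphisms modulo the center, $\dim M=\dim\MM+\dim Z(G_{\mathrm{der}})=(g-1)\dim G_{\mathrm{der}}$.
\end{itemize}

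\textbf{The map $\phi$.} Surjectivity of $\Phi$ on the specified components holds because every $G_i$-bundle of type $\tau_i$ lifts: the obstruction lies in $H^2(C,Z)$ of a finite or torus kernel and is absorbed by the choice of type, and the $G^{\mathrm{ab}}$-part can be adjusted to $Q$. For $\phi$, note that $G_{\mathrm{der}}\to\prod G_i$ is an isogeny. Fixing $Q$, the fibers of $\phi$ are therefore governed by the finite group $H^1(C,Z(G_{\mathrm{der}}))$-type twists, which gives quasi-finiteness. Since $\phi$ is a proper morphism of projective varieties of equal dimension that is quasi-finite, it is finite. Surjectivity follows since semistability is preserved under isogeny and extension to $G_{\mathrm{ad}}$.

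\textbf{The open set $U$ and codimension.}
\begin{itemize}
\item Over regularly stable $G_i$-bundles, which have trivial automorphisms as $G_i$ is adjoint, the automorphism group of $(P,\alpha)$ is exactly $Z(G_{\mathrm{der}})$. This yields the gerbe structure, and $U$ is smooth as the coarse space of a smooth gerbe.
\item The codimension bounds $\codim\geq2$ follow from the Faltings / Biswas--Hoffmann estimates that the non-regularly-stable locus in $\Bun_{G_i}$ has codimension $\geq 2$ when $g\geq3$. These estimates pull back through the smooth or finite maps $\Phi$ and $\phi$, and this step uses $g\geq3$ essentially.
\end{itemize}

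\textbf{Theta classes.} Let $\Theta_i$ be the pullback via $\phi$ of the ample generator (rationally) of $\Pic(M_{G_i}^{\tau_i})$, which has Picard number one by Kumar--Narasimhan and Beauville--Laszlo--Sorger. Each $\Theta_i$ is nef, being a pullback of an ample class. Since $\phi$ is finite, $\phi^*$ is injective on $\Cl_{\mathbb Q}$ and $\Theta_i\cdot H^{\dim M-1}>0$ for $H$ ample: a finite pullback of an ample class is ample, and ample times $H^{n-1}$ is positive.

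\textbf{Main obstacle.} The hardest step is the claim that $\Theta_1,\dots,\Theta_s$ span $\Cl(M)_{\mathbb Q}$. For this I would compute $\Cl(M)_{\mathbb Q}=\Pic(U)_{\mathbb Q}=\Pic(\mathfrak U)_{\mathbb Q}^{\text{weight }0}=\Pic(\MM)_{\mathbb Q}$, using codimension $\geq2$ and normality. I would then invoke the known description of $\Pic(\Bun_{G})$ with fixed abelianization, namely Biswas--Hoffmann, which states that rationally it is generated by the central charges of the simple factors. Those generators correspond to the $\Theta_i$.
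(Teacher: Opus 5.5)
Your outline follows the paper's proof closely. You reduce to $N=\prod_i M_{G_i}^{\tau_i}(C)$ through the central isogeny $G\to G^{\mathrm{ab}}\times G_{\mathrm{ad}}$ with finite kernel $Z(G_{\mathrm{der}})$. You transport the Faltings and Biswas--Hoffmann codimension bounds along the smooth $\Phi$ and the finite $\phi$. You obtain the basis from $\Cl(M)_{\mathbb Q}=\Pic(U)_{\mathbb Q}\hookrightarrow\Pic(\MM)_{\mathbb Q}\simeq\mathbb Q^s$. Your local variations are fine: smoothness of $\MM$ via vanishing of $H^2$ instead of the $\Bun_\mu(C)$-torsor structure of $\Phi$, and smoothness of $U$ by descent along the gerbe instead of finite \'etaleness of $\phi|_U$ over the regularly stable loci.

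One step, however, is false as written: the proof of $\Theta_i\cdot H^{\dim M-1}>0$. You call $\Theta_i$ ample as ``a finite pullback of an ample class''. But $\Theta_i$ is pulled back along $\operatorname{pr}_i\circ\phi:M\to M_{G_i}^{\tau_i}(C)$, which is not finite when $s\geq2$. The class $\operatorname{pr}_i^*\mathcal L_i$ is trivial on the positive-dimensional fibres $\prod_{j\ne i}M_{G_j}^{\tau_j}(C)$ of $\operatorname{pr}_i$. Since $\phi$ is finite and surjective, $\Theta_i$ is trivial on curves of $M$ lying over such fibres. So it is only nef, as your preceding sentence correctly says. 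The repair is the paper's argument. A positive multiple of $\Theta_i$ is the pullback, along the surjective map $\operatorname{pr}_i\circ\phi$, of a nonzero section of a very ample power of $\mathcal L_i$. It is therefore a nonzero effective Cartier divisor, and such a divisor has positive degree against $H^{\dim M-1}$. This matters: positivity for every $i$ is exactly what the slope comparison in the proof of the main theorem uses.

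Three smaller points.
\begin{enumerate}
\item Injectivity of $\phi^*$ alone does not give independence of the $\Theta_i$. You also need independence of the $\operatorname{pr}_i^*\mathcal L_i$ on $N$, which follows by restricting to slices $\{\ast\}\times M_{G_i}^{\tau_i}(C)\times\{\ast\}$. Once you have $s$ independent classes in a space of dimension at most $s$, no matching with ``central charges'' is needed.
\item Surjectivity of $\phi$ needs semistability to be reflected by the isogeny, not just preserved. A lift of a semistable $G_{\mathrm{ad}}$-bundle must itself be semistable.
\item Your irreducibility step is loosely stated. $\MM_{G,Q}^{\tau}$ is in general not $\Bun_{G_{\mathrm{der}}}^{\tau'}$ times a torsor; for instance, a bundle with determinant $L$ needs an $n$th root of $L$ to become an $\SL_n$-bundle. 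The connectedness input should be the twisted uniformization of BLS98, \S2.4, which the paper uses, or a genuine $z$-extension argument.
\end{enumerate}
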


\begin{proof}
Put
\[
 T:=G^{\mathrm{ab}},\qquad
 \mu:=Z(G_{\mathrm{der}}),\qquad
 \mathcal N:=\prod_{i=1}^s\MM_{G_i}^{\tau_i},
 \qquad
 N:=\prod_{i=1}^sM_{G_i}^{\tau_i}(C).
\]
Consider the central isogeny
\begin{equation}\label{eq:central-isogeny}
 1\longrightarrow\mu\longrightarrow G
 \longrightarrow T\times G_{\mathrm{ad}}
 \longrightarrow1.
\end{equation}
By \cite[Example~5.1.4]{BH10}, the induced morphism
\[
 \MM_G^\tau
 \longrightarrow
 \MM_T^{\tau^{\mathrm{ab}}}\times\mathcal N
\]
is a torsor under the group stack $\Bun_\mu(C)$.
Its base change along
\[
 \mathcal N\longrightarrow
 \MM_T^{\tau^{\mathrm{ab}}}\times\mathcal N,
 \qquad
 P_{\mathrm{ad}}\longmapsto(Q,P_{\mathrm{ad}})
\]
is precisely $\Phi:\MM\to\mathcal N$.
Consequently, $\Phi$ is also a $\Bun_\mu(C)$-torsor.
Since $\mu$ is a finite abelian group over $\C$, the stack
$\Bun_\mu(C)$ is smooth of dimension zero.
Thus $\Phi$ is surjective and smooth of relative dimension zero.
The stack $\mathcal N$ is smooth, so $\MM$ is smooth as well.

 To apply the twisted uniformization in
\cite[\S2.4]{BLS98}, use the presentation
\[
 G\simeq
 \bigl(G_{\mathrm{der}}\times Z(G)^0\bigr)
 \big/
 \{(z,z^{-1}):z\in G_{\mathrm{der}}\cap Z(G)^0\}.
\]
The isogeny $Z(G)^0\to G^{\mathrm{ab}}$ induces a surjection on
isomorphism classes of degree-zero bundles on $C$, by divisibility
of $\Pic^0(C)$. Tensoring by a fixed degree-zero $Z(G)^0$-bundle
therefore reduces an arbitrary $Q$ to the torus bundles of the same
type used in \cite[\S2.1]{BLS98}. The uniformization in
\cite[\S2.4]{BLS98}, applied to $G_{\mathrm{der}}$, proves that
$\MM$ is connected, hence irreducible. The construction of the
coarse moduli space gives normality and projectivity of $M$;
see \cite[\S7.1 and Lemma~7.3]{BLS98}. Its irreducibility follows
from that of the semistable open substack. Semistability is
preserved and reflected by \eqref{eq:central-isogeny}. Thus
$\phi$ is surjective and has finite fibers. It is proper by
projectivity, and is consequently finite.

For $(P,\alpha)\in\mathfrak U$, every automorphism of $P$ lies in
$Z(G)$, since the associated adjoint bundle has no nontrivial
automorphisms. Such an automorphism preserves $\alpha$ exactly
when it lies in
\[
 Z(G)\cap G_{\mathrm{der}}=Z(G_{\mathrm{der}}).
\]
Consequently, $\mathfrak U\to U$ is a gerbe with this band.
The lifting description above shows that the restriction of
$\phi$ to $U$ is finite \etale{} over the product of the regularly
stable loci, as in the proof of \cite[Proposition~13.5]{BLS98}.
Those loci are smooth by \cite[\S13.1]{BLS98}, so $U$ is smooth.
On $\mathfrak U$, we have $H^0(C,E_{\Ad}(P))=0$.
Riemann--Roch therefore gives
\[
 \dim M=h^1(C,E_{\Ad}(P))=(g-1)\dim G_{\mathrm{der}}.
\]

For each $G_i$, the complement of the stable locus in
$\MM_{G_i}^{\tau_i}$ has codimension at least $g-1$
\cite[Lemma~2.1]{BH12}. The complement of the regularly stable
locus in $M_{G_i}^{\tau_i}(C)$ has codimension at least two
\cite[\S13.1]{BLS98}, by the argument of
\cite[Theorem~II.6]{Fal93}. The genus-two exception in that
argument does not occur here. On the stable locus, the stabilizers
are finite, so passage to the coarse moduli space preserves
codimension. Combining these two estimates gives codimension
at least two for the complement of the regularly stable locus
in each stack $\MM_{G_i}^{\tau_i}$. Smoothness of $\Phi$ then
gives the asserted bound for $\MM\setminus\mathfrak U$.
Finiteness and surjectivity of $\phi$ give the bound for
$M\setminus U$.

Since $M$ is normal, $U$ is smooth, and $M\setminus U$ has
codimension at least two, restriction identifies
$\Cl(M)$ with $\Pic(U)$; see also \cite[\S13.1]{BLS98}.
Pullback along the gerbe $\mathfrak U\to U$ is injective on
Picard groups \cite[\S7]{BH12}. Moreover,
$\Pic(\MM)=\Pic(\mathfrak U)$ by smoothness and the
codimension bound \cite[Lemma~7.3(ii)]{BH12}.
The Picard calculation for the twisted stack
\cite[Lemma~1.4, Proposition~1.5 and \S2.4]{BLS98} thus gives
\[
 \Cl(M)_{\mathbb Q}
 =\Pic(U)_{\mathbb Q}
 \lhook\joinrel\longrightarrow
 \Pic(\mathfrak U)_{\mathbb Q}
 =\Pic(\MM)_{\mathbb Q}
 \simeq\mathbb Q^s.
\]
For each $i$, the group $\Pic(M_{G_i}^{\tau_i}(C))$ is generated
by an ample line bundle \cite[Proposition~7.4]{BLS98}.
Choose $\Theta_i$ to be a positive rational multiple of the
pullback of its class under the $i$th component of $\phi$.
These classes are linearly independent over $\mathbb Q$:
a relation among their pullbacks would make the corresponding
class on the product numerically trivial, since $\phi$ is
finite and surjective. Restriction to each factor then makes
every coefficient zero.
They therefore form a basis of $\Cl(M)_{\mathbb Q}$.
They are nef, and positive multiples are nonzero effective
Cartier divisors. This proves \eqref{eq:theta-positive}.
\end{proof}

The lifting argument and irreducibility used above also hold for
$g\geq2$; the codimension-two assertions are the part for which
we impose $g\geq3$.

\begin{remark}
If $G_{\mathrm{der}}$ is almost simple, then
$M=M_{G,Q}^{\tau}(C)$ is a normal Fano variety of Picard number
one. Indeed, the proof of \cite[Proposition~13.6]{BLS98}
identifies the anticanonical line bundle of
$M_{G_1}^{\tau_1}(C)$ with the determinant line bundle associated
with the adjoint representation. This line bundle is ample by
\cite[Construction~II.8]{Fal93}, since the adjoint representation
of $G_1$ is faithful. The morphism $\phi$ is \etale{} over the
regularly stable locus, and $M\setminus U$ has codimension at
least two. Hence
\[
 K_M=\phi^*K_{M_{G_1}^{\tau_1}(C)}.
\]
Thus $-K_M$ is Cartier and ample. The Picard number assertion
follows from Proposition~\ref{prop:inputs} with $s=1$.
\end{remark}

\begin{lemma}\label{lem:adjoint-reduction}
Suppose $\rho|_{Z(G)}=1$. For every $P\in\MM_{G,Q}^{\tau}$
there is a canonical identification
\begin{equation}\label{eq:associated-adjoint}
 E_\rho(P)=P\times^G V
 \simeq(P\times^G G_{\mathrm{ad}})\times^{G_{\mathrm{ad}}}V.
\end{equation}
A generic cohomology-vanishing statement for this representation
on the induced component of $\MM_{G_{\mathrm{ad}}}\times C$
therefore holds on $\MM_{G,Q}^{\tau}\times C$ as well.
\end{lemma}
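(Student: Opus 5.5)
The plan is to prove the identification \eqref{eq:associated-adjoint} by an explicit map of fiber bundles, and then derive the vanishing transfer from the morphism $\Phi$ of \eqref{eq:adjoint-map} together with Proposition~\ref{prop:inputs}.

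For the identification, write $\pi:G\to G_{\mathrm{ad}}$ and $\bar\rho$ for the factorization of $\rho$ through $G_{\mathrm{ad}}$, so $\rho=\bar\rho\circ\pi$. Define
\[
 P\times V\longrightarrow (P\times^G G_{\mathrm{ad}})\times^{G_{\mathrm{ad}}}V,\qquad (p,v)\longmapsto [[p,1],v].
\]
Then check $G$-invariance:
\[
 (ph,\rho(h)^{-1}v)\mapsto [[ph,1],\bar\rho(\pi h)^{-1}v]=[[p,\pi(h)],\bar\rho(\pi h)^{-1}v]=[[p,1],v].
\]
So the map descends to a morphism of vector bundles $E_\rho(P)\to(P\times^G G_{\mathrm{ad}})\times^{G_{\mathrm{ad}}}V$ over $C$. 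It is an isomorphism on fibers. Indeed, étale locally $P$ is trivial, both sides become $C\times V$, and the map becomes the identity. The construction uses no choices and is functorial in $P$, so it is canonical and works in families. In particular, over $\MM_{G,Q}^{\tau}\times C$ we get an isomorphism of the universal associated bundle with the pullback under $\Phi\times\mathrm{id}_C$ of the universal associated bundle on $\prod_i\MM_{G_i}^{\tau_i}\times C$; the trivialization $\alpha$ plays no role.

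For the second assertion, the generic vanishing statement concerns a condition of the form ``$H^0$ and $H^1$ of $E(D)$ vanish,'' where $D$ is a divisor such as $(g-1)x$ and $E$ is the associated bundle. By semicontinuity, this condition defines an open substack $W$ of the induced component $\mathcal N\times C$. The hypothesis is that $W$ is nonempty, hence dense, since $\mathcal N$ is irreducible. By the identification above, the locus in $\MM_{G,Q}^{\tau}\times C$ where the same condition holds is exactly $(\Phi\times\mathrm{id})^{-1}(W)$.

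The one point requiring care is showing this preimage is nonempty, and hence dense because $\MM$ is irreducible. I would take this from Proposition~\ref{prop:inputs}: $\Phi$ is surjective (indeed a $\Bun_\mu(C)$-torsor) onto the specified components. So the preimage of a nonempty open substack is nonempty and open, and the general pair $(P,x)$ satisfies the vanishing.

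For the version $\MM_G^\tau$ without fixed $Q$, the same argument applies to $\MM_G^\tau\to\MM_T\times\mathcal N$, which is also a torsor and hence surjective. Alternatively, one can note that every $\MM_{G,Q}^\tau$ embeds in $\MM_G^\tau$. The main obstacle is thus only making sure the "induced component" is the one hit by $\Phi$, which is what the surjectivity statement provides.
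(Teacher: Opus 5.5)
Your proposal is correct and follows the paper's argument, with more detail. Your explicit map and $G$-invariance check spell out the extension of structure group along $G\to G_{\mathrm{ad}}$, and, as in the paper, the nonempty open vanishing locus pulls back along the surjective morphism $\Phi$ of Proposition~\ref{prop:inputs} to a nonempty open substack, which is dense because $\MM_{G,Q}^{\tau}\times C$ is irreducible. One small inaccuracy in your aside: $\MM_{G,Q}^{\tau}$ does not literally embed in $\MM_G^\tau$, since forgetting $\alpha$ is faithful but not full on automorphisms, but this plays no role in the lemma, and your torsor argument for $\MM_G^\tau$ stands on its own.
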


\begin{proof}
The identification follows from extension of structure group,
since $\rho$ factors through $G_{\mathrm{ad}}$. A nonempty
vanishing open pulls back to a nonempty open by surjectivity of
$\Phi$; its inverse image is dense by irreducibility.
\end{proof}

This does not identify $\MM_{G,Q}^{\tau}$ with a component of
$\MM_{G_{\mathrm{der}}}$. For example, a rank-$n$ vector bundle
with specified determinant $L$ cannot in general be converted
into an $\SL_n$-bundle by tensoring: this requires an $n$th root
of $L$. Its projective bundle nevertheless lies in the
appropriate component of $\MM_{\PGL_n}$, and its trace-free
endomorphism bundle depends only on that projective bundle.
Lemma~\ref{lem:adjoint-reduction} transfers cohomology vanishing
in this way. Stability will instead be proved directly on
$M_{G,Q}^{\tau}(C)$, using modifications in $G_{\mathrm{der}}$.

\subsection{Universal associated bundles and cohomology sheaves}

Let $\mathcal P$ be the universal principal $G$-bundle on
$\MM\times C$. It comes with the universal identification of
$\mathcal P\times^G G^{\mathrm{ab}}$ with the pullback of $Q$.
Applying the representation gives the associated vector bundle
$\mathcal P\times^G V$ on $\MM\times C$.

\begin{proposition}\label{prop:associated-descent}
The restriction of $\mathcal P\times^G V$ to
$\mathfrak U\times C$ descends to a vector bundle $\EE_\rho$ on
$U\times C$. Its restriction to $\{[P]\}\times C$ is $E_\rho(P)$, its rank is $r$,
and its determinant is trivial. In particular,
$\deg E_\rho(P)=0$.
The same assertions hold for $M=M_G^\tau(C)$, with $U$ the
inverse image of the regularly stable loci in the adjoint
factors and $\mathfrak U\subset\MM_G^\tau$ its inverse image
in the bundle stack.
\end{proposition}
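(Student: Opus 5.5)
The plan is to use the standard descent criterion for sheaves on gerbes: a vector bundle on $\mathfrak U\times C$ descends to $U\times C$ exactly when the inertia of the gerbe acts trivially on its fibers. By Proposition~\ref{prop:inputs}, $\mathfrak U\to U$ is a gerbe with band $\mu=Z(G_{\mathrm{der}})$. Hence $\mathfrak U\times C\to U\times C$ is also a gerbe banded by $\mu$, and the first step is to compute how this band acts on $\mathcal P\times^G V$.

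At a point $((P,\alpha),y)$, an automorphism of $(P,\alpha)$ lying in the band is right multiplication by some $z\in Z(G_{\mathrm{der}})\subset Z(G)$. It acts on the fiber $P_y\times^G V$ by $[p,v]\mapsto[pz,v]=[p,\rho(z)v]$. Since $\rho|_{Z(G)}=1$, this action is trivial. For a gerbe with abelian band, a quasi-coherent sheaf on which the band acts with weight zero is pulled back from the coarse base. Concretely, one can check this étale-locally on $U$: wherever the gerbe is trivial, $\mathfrak U\simeq U\times B\mu$, and a weight-zero $\mu$-equivariant bundle on $U\times C\times B\mu$ is pulled back from $U\times C$. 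The local descents are canonical, so they glue. This produces $\EE_\rho$, which is locally free of rank $r=\dim V$.

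The restriction to $\{[P]\}\times C$ is $E_\rho(P)$ by construction, since $\EE_\rho$ pulls back to $\mathcal P\times^G V$ and that restricts to $P\times^G V$.

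For the determinant, note that $\det(\mathcal P\times^G V)=\mathcal P\times^G\det V$ is associated with the character $\det\rho:G\to\C^*$, which factors through $G_{\mathrm{ad}}$. A semisimple group has no nontrivial characters, so $\det\rho=1$. Therefore $\det(\mathcal P\times^GV)\simeq\OO$ canonically, with trivial $\mu$-action, and this trivialization descends to give $\det\EE_\rho\simeq\OO_{U\times C}$. Restricting to a fiber gives $\det E_\rho(P)\simeq\OO_C$, so $\deg E_\rho(P)=0$.

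For $M=M_G^\tau(C)$ the same argument applies once the inertia is identified. On the inverse image of the regularly stable loci, automorphisms of $P$ lie in $Z(G)$, because the adjoint bundle has no nontrivial automorphisms. So $\mathfrak U\to U$ is a gerbe banded by $Z(G)$, possibly non-finite, and $\rho|_{Z(G)}=1$ again gives trivial weight. The descent lemma for gerbes with abelian band is unchanged.

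The main obstacle is making the descent rigorous when the band is $Z(G)$ rather than finite, and when the gerbe is not globally trivial. The cleanest approach is to use the general fact that for a gerbe $\mathcal X\to X$ banded by a commutative group $A$, pullback gives an equivalence between $\mathrm{QCoh}(X)$ and the weight-zero part of $\mathrm{QCoh}(\mathcal X)$. One verifies this fppf-locally, where the gerbe becomes $X\times BA$ and the statement reduces to taking $A$-invariants.
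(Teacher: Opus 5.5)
Your proposal is correct and follows essentially the same route as the paper. You descend $\mathcal P\times^G V$ along the gerbe $\mathfrak U\to U$, with band $Z(G_{\mathrm{der}})$, or $Z(G)$ for $M_G^\tau(C)$, because the stabilizers lie in $Z(G)$ and act trivially when $\rho|_{Z(G)}=1$; you then get the trivial determinant because $\det\rho$ is a character of the semisimple group $G_{\mathrm{ad}}$. Your fppf-local, weight-zero descent argument for an abelian, possibly non-finite, band simply spells out what the paper compresses into ``descent along this gerbe.''
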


\begin{proof}
For $M=M_{G,Q}^{\tau}(C)$, every stabilizer of
$\mathfrak U\to U$ is contained in $Z(G)$ and therefore acts
trivially on $V$. Descent along this gerbe gives $\EE_\rho$.
For $M=M_G^\tau(C)$, the morphism $\mathfrak U\to U$ is
instead a gerbe with band $Z(G)$ \cite[\S6]{BH12}.
The same triviality of the stabilizer action gives the descent
of the vector bundle associated with the universal $G$-bundle
on $\MM_G^\tau\times C$.
In either case, $\det\rho$ is a character of the connected
semisimple group $G_{\mathrm{ad}}$, which has no nontrivial
characters. Thus $\det\rho=1$, and the associated determinant
bundle is trivial.
\end{proof}

For the following construction, allow either choice of $M$ and
retain $g\geq3$.
The space $M$ is normal, $U$ is smooth, and
\[
 \codim_M(M\setminus U)\geq2.
\]
For $M=M_{G,Q}^{\tau}(C)$, these assertions are part of
Proposition~\ref{prop:inputs}. For $M=M_G^\tau(C)$, they follow
from the finite \etale{} cover constructed in the proof of
Proposition~\ref{prop:varying-central-data}. The construction of
this cover and these assertions about $M$ and $U$ do not require
a bound on $m_\rho$.

Let $p:U\times C\to U$ be the projection. Since
$(R^1p_*\EE_\rho)^{**}$ is reflexive on $U$, it has a unique
coherent reflexive extension to $M$
\cite[Tags~0AY4 and~0EBJ]{Stacks}. This is the sheaf
$\mathcal V_\rho$ defined in \eqref{eq:Vr-intro}.

Set
\begin{equation}\label{eq:Urho}
 U_\rho=\{[P]\in U:H^0(C,E_\rho(P))=0\},\qquad
 j_\rho:U_\rho\hookrightarrow M.
\end{equation}
Semicontinuity makes $U_\rho$ open, and its nonemptiness will
follow from Theorem~\ref{thm:interp}. On this open set,
cohomology and base change makes $(R^1p_*\EE_\rho)|_{U_\rho}$
locally free of rank $(g-1)r$. Taking its double dual does not
change it, so there is a canonical identification
\[
 \mathcal V_\rho|_{U_\rho}
 \simeq\mathcal V_\rho^\circ
 :=(R^1p_*\EE_\rho)|_{U_\rho}.
\]
In particular, $\rk\mathcal V_\rho=(g-1)r$.
For a curve $f:\Pp^1\to U_\rho$, we may therefore identify
$f^*\mathcal V_\rho$ with $f^*\mathcal V_\rho^\circ$.
Under the additional bound $m_\rho\leq g-1$,
Section~\ref{subsec:cohomology-extension} will prove
$\codim_M(M\setminus U_\rho)\geq2$ and the resulting
identification $\mathcal V_\rho\simeq(j_\rho)_*\mathcal V_\rho^\circ$.

For either choice of $M$, a torsion-free coherent sheaf
$\mathcal A$ on $M$ and an ample Cartier divisor $H$, put
\[
 \det\mathcal A=
 \bigl(\textstyle\bigwedge^{\rk\mathcal A}\mathcal A\bigr)^{**},
 \qquad
 \mu_H(\mathcal A)=
 \frac{c_1(\mathcal A)\cdot H^{\dim M-1}}{\rk\mathcal A}.
\]
Here $c_1(\mathcal A)$ denotes the Weil divisor class of
$\det\mathcal A$, and intersection with the Cartier divisor
$H$ is understood in the Chow group
\cite[Tags~02SN and~02TG]{Stacks}.
For $M=M_{G,Q}^{\tau}(C)$, Proposition~\ref{prop:inputs}
also shows that this class is rational Cartier.
The sheaf $\mathcal A$
is slope stable with respect to $H$ if
$\mu_H(\mathcal F)<\mu_H(\mathcal A)$ for every nonzero
subsheaf $\mathcal F\subset\mathcal A$ of smaller rank.
It suffices to test saturated subsheaves. On a smooth open
set, a torsion-free coherent sheaf is locally free away from
a subset of codimension at least two.

\section{The vanishing theorem and vanishing sequences}
\label{sec:interpolation}

In this section, we prove the vanishing theorem stated in
Theorem~\ref{thm:interp}. We first treat the case of a connected semisimple
group and then deduce the reductive case by passing to the
adjoint quotient, using Lemma~\ref{lem:adjoint-reduction} for the assertion with fixed
central data. We conclude by proving the monotonicity of
vanishing increments in Lemma~\ref{lem:jets}, which will be used in
Section~\ref{sec:slope}.

For the preliminary lemmas below, let $G$ be connected semisimple,
let $g\geq 2$, and let $(\rho,V)$ be a finite-dimensional algebraic
representation satisfying $V^G=0$; irreducibility is not required.
Write $\mathfrak{g}=\operatorname{Lie}(G)$ and retain the notation
$E_\rho(P)=P\times^G V$.

\subsection{Finite monodromy and invariant vectors}
\begin{lemma}\label{lem:finite-monodromy}
For every $\tau\in\pi_1(G)$, there is a bundle
$P_0\in\MM_G^\tau$ induced from a connected finite \'etale
$A$-torsor, where $A\subset G$ is finite abelian and contains a
regular semisimple element.
\end{lemma}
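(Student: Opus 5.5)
The plan is to realize $P_0$ as a flat bundle coming from a homomorphism $\pi_1(C)\to A\subset G$ that factors through a pair of commuting elements of $G$. The topological type $\tau$ is then read off from the commutator of their lifts to the universal cover. Let $\pi:\tilde G\to G$ be the simply connected cover. Identify $\pi_1(G)$ with $\ker\pi\subset Z(\tilde G)$, and let $z\in\ker\pi$ correspond to $\tau$.

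First recall the standard description of types of flat bundles. Let $\chi:\pi_1(C)\to G$ be a homomorphism, with standard generators $a_1,b_1,\dots,a_g,b_g$ satisfying $\prod_k[a_k,b_k]=1$. Lift each $\chi(a_k)$ and $\chi(b_k)$ arbitrarily to $\tilde G$. The element $\prod_k[\widetilde{\chi(a_k)},\widetilde{\chi(b_k)}]$ lies in $\ker\pi$, and it is (up to a fixed sign convention) the topological type of $C_\chi\times^{\pi_1(C)}G$. Hence it suffices to find $\tilde t,\tilde n\in\tilde G$ with the following properties:
\begin{enumerate}
\item $[\tilde t,\tilde n]=z^{\pm1}$, with the sign matching the convention;
\item $\tilde t$ and $\tilde n$ have finite order;
\item the image of $\tilde n$ (or of $\tilde t$) is regular semisimple in $G$.
\end{enumerate}
Given these, the images $t=\pi(\tilde t)$ and $n=\pi(\tilde n)$ commute in $G$ because their commutator is central and lies in $\ker\pi$. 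So $A:=\langle t,n\rangle$ is a finite abelian group. Set $\chi(a_1)=t$, $\chi(b_1)=n$ and $\chi(a_k)=\chi(b_k)=1$ for $k\geq2$. This $\chi$ is a surjection $\pi_1(C)\to A$, so the corresponding $A$-torsor is connected and finite \'etale, and the induced bundle has type $\tau$.

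For the construction I fix a maximal torus $\tilde T\subset\tilde G$ and a Coxeter element $w\in W$. Let $\tilde n$ be a Tits lift of $w$; Tits lifts have finite order. By Kostant's theorem, lifts of Coxeter elements to $N(\tilde T)$ are regular semisimple, and they remain so after applying $\pi$, since regularity is a condition on the adjoint action. For $t\in\tilde T$ we have $\tilde n\,t\,\tilde n^{-1}t^{-1}=w(t)t^{-1}$. The Coxeter element has no nonzero fixed vectors on $\Lie\tilde T$, so the endomorphism $t\mapsto w(t)t^{-1}$ of $\tilde T$ is an isogeny and in particular surjective. Choose $\tilde t\in\tilde T$ with $w(\tilde t)\tilde t^{-1}=z^{\mp1}$, using the inverse to swap the order of the commutator if needed. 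Since $z$ has finite order and the isogeny has finite kernel, every such solution $\tilde t$ is torsion.

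The main point to check carefully is the regularity of the Coxeter lift, together with the sign and normalization conventions relating the commutator to $\tau$. If quoting Kostant's result is undesirable, the fallback is to perturb: replace $\tilde n$ by $s\tilde n$ with $s$ a suitable torsion element of $\tilde T$. Every such element is still a lift of $w$, and $[\tilde t,s\tilde n]=[\tilde t,\tilde n]$ because $\tilde T$ is abelian. One would then argue directly that a generic torsion choice of $s$ makes $s\tilde n$ regular semisimple.
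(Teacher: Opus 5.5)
Your proposal is correct and follows essentially the same route as the paper: lift a Coxeter element $w$ to $\tilde n\in N(\tilde T)$, solve $[\tilde n,\tilde t]=z^{\pm1}$ with $\tilde t\in\tilde T$ torsion using that $t\mapsto w(t)t^{-1}$ is an isogeny of $\tilde T$, and map $\pi_1(C)$ onto $A=\langle\pi(\tilde n),\pi(\tilde t)\rangle$ through the first handle. The paper differs only in minor points: it takes an arbitrary lift and proves finite order through the finiteness of $\tilde T^w$ rather than using a Tits lift, it cites Steinberg's cross-section theorem rather than Kostant for regularity, and your perturbation fallback would not help anyway, since surjectivity of the same isogeny makes every element of $\tilde T\tilde n$ conjugate to $\tilde n$ under $\tilde T$, so regularity has to come from the Coxeter property itself.
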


\begin{proof}
Let
$1\longrightarrow Z\longrightarrow\widetilde G
 \xrightarrow{q}G\longrightarrow1$
be the simply connected covering of $G$. Its kernel $Z$
is finite and central, and is naturally identified with
$\pi_1(G)$. Let $\zeta\in Z$ correspond to the prescribed
topological type $\tau$.

Choose a maximal torus $T'\subset\widetilde G$ and a
Borel subgroup $B'\supset T'$. Let
$\alpha_1,\ldots,\alpha_{\operatorname{rank}\widetilde G}$
be the corresponding simple roots.
For each $i$, let $U_{\alpha_i}$ be the root subgroup,
choose an isomorphism of algebraic groups
 $x_i:\mathbb G_a\xrightarrow{\sim}U_{\alpha_i},$
and choose an element $\dot s_i\in N_{\widetilde G}(T')$
representing the simple reflection $s_i$ associated
with $\alpha_i$. Define $n=\dot s_1\cdots\dot s_{\operatorname{rank}\widetilde G},\ w=s_1\cdots s_{\operatorname{rank}\widetilde G}.$ Thus $n$ represents $w$ in
$N_{\widetilde G}(T')/T'$, and the action of $w$
on $T'$ is $w(t)=ntn^{-1}.$ Consider the morphism
$\phi: T'\longrightarrow T'$ which sends $t$ to $w(t)t^{-1}=ntn^{-1}t^{-1}.$ Then by \cite[Lemma~7.6]{Ste65}, the map $\phi$ is an isogeny, namely a surjective
homomorphism of algebraic groups with finite kernel.
In particular, the group
$\ker\phi=(T')^w:=\{t\in T':w(t)=t\}$ is a finite group.
 Since the central subgroup $Z$ is contained in $T'$,
surjectivity of $\phi$ therefore allows us to choose $t_0\in T'$
such that $\phi(t_0)=[n,t_0]=\zeta.$

We claim that both $n$ and $t_0$ have finite order. Indeed, let $h$
be the order of $w$ in the Weyl group. Then $n^h\in T'$,
and $w(n^h)=n\,n^h\,n^{-1}=n^h.$ Hence $n^h\in(T')^w$. Since $(T')^w$ is finite, $n^h$, and consequently $n$, has finite order. Similarly, if $d$ is the order of $\zeta$, then
$\phi(t_0^d)=\phi(t_0)^d=\zeta^d=1.$ Thus $t_0^d\in(T')^w$, so $t_0$ has finite order.

Put $a=q(n),\ b=q(t_0),\ A=\langle a,b\rangle.$
The elements   $a$ and $b$ commute since $[a,b]=q([n,t_0])=q(\zeta)=1.$ They both have finite order,
so $A$ is a finite abelian subgroup of $G$. We claim that $a$ is a regular semisimple element. By Steinberg's cross-section theorem \cite[Theorem~1.4]{Ste65}, $n$ is regular, meaning that $\dim C_{\widetilde G}(n)=\operatorname{rank}\widetilde G.$
Moreover, $n$ is semisimple because it has finite order. Hence $n$ is  regular semisimple. This implies that $a=q(n)$ is regular semisimple since $q$ is a central isogeny.

Fix a base point $*\in C$. Then we have
$\pi_1(C,*)
 =
 \left\langle
 \gamma_1,\delta_1,\ldots,\gamma_g,\delta_g
 \ \middle|\
 \prod_{j=1}^{g}[\gamma_j,\delta_j]=1
 \right\rangle.$ Define   a surjective group morphism $\theta:\pi_1(C,*)\longrightarrow A$ by
\[
 \begin{aligned}
 \theta(\gamma_1)&=a,
 &\theta(\delta_1)&=b,\\
 \theta(\gamma_j)&=1,
 &\theta(\delta_j)&=1
 \qquad(2\leq j\leq g).
 \end{aligned}
\]

The homomorphism $\theta$ defines a connected finite topological
$A$-covering of $C^{\mathrm{an}}$. By the Riemann existence theorem,
it is the analytification of a connected finite \etale{}
$A$-torsor $E\to C$.

Extend its structure group along $A\subset G$ and set
$P_0=E\times^A G$. The product of commutators of the chosen lifts
of its monodromy generators is $[n,t_0]=\zeta$, so its topological
type is $\tau$.

\end{proof}

Choose $A$ and $P_0$ as in Lemma~\ref{lem:finite-monodromy}.
The character decomposition gives
\begin{equation}\label{eq:finite}
 E_\rho(P_0)=(V^A\otimes\OO_C)
 \oplus\bigoplus_{\chi\ne1}V_\chi\otimes\eta_\chi,
\end{equation}
where the $\eta_\chi$ are nontrivial degree-zero torsion line bundles.
The $A$-equivariant projection
$\Pi=\frac1{|A|}\sum_{a\in A}\rho(a):V\longrightarrow V^A$
induces the projection onto the first summand of \eqref{eq:finite}.

\begin{lemma}\label{lem:compression}
Notation as above. There is $X\in\gfr$ such that
\begin{equation}\label{eq:compression}
 \Pi\,d\rho(X)^2|_{V^A}:V^A\longrightarrow V^A
\end{equation}
is an isomorphism.
\end{lemma}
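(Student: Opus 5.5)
The plan is to turn the statement into a positivity assertion by a unitary trick, and then use that $A$ contains a regular semisimple element to force definiteness.

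\emph{Step 1: a compatible compact form.} Since $A$ is finite, it is a compact subgroup of $G$. It is therefore contained in a maximal compact subgroup $K\subset G$, and $\Lie(K)=\mathfrak k$ is a compact real form with $\gfr=\mathfrak k\oplus i\mathfrak k$. Fix a $K$-invariant Hermitian inner product on $V$. For $X\in i\mathfrak k$ the operator $d\rho(X)$ is then Hermitian. Because $A$ acts unitarily, $\Pi$ is the orthogonal projection onto $V^A$. Hence, for $v\in V^A$,
\[
 \langle \Pi\,d\rho(X)^2v,v\rangle=\langle d\rho(X)v,d\rho(X)v\rangle=\|d\rho(X)v\|^2 .
\]
So $\Pi\,d\rho(X)^2|_{V^A}$ is a positive semidefinite Hermitian operator on $V^A$, and its kernel is $V^A\cap\ker d\rho(X)$. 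Using $\rho(a)v=v$, we can also write, for $v\in V^A$,
\[
 \Pi\,d\rho(X)^2v=\frac1{|A|}\sum_{a\in A}d\rho(\Ad(a)X)^2v .
\]
Each summand is positive semidefinite, so a vector in the kernel is killed by every $d\rho(\Ad(a)X)$. It therefore suffices to find $X\in i\mathfrak k$ such that the only $v\in V$ annihilated by all $d\rho(\Ad(a)X)$, $a\in A$, is $v=0$.

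\emph{Step 2: the $A$-orbit of a generic $X$ generates $\gfr$.} Let $a\in A$ be the regular semisimple element from Lemma~\ref{lem:finite-monodromy}. It has finite order, and its centralizer has identity component a maximal torus $T$, with $\Lie(T)=\hfr$. Decompose $\gfr$ into eigenspaces of $\Ad(a)$. The eigenvalue-$1$ eigenspace is $\hfr$, and every other eigenspace is a sum of root spaces. The span $S$ of $\{\Ad(a^k)X\}_k$ contains each eigencomponent of $X$, obtained by character averaging over $\langle a\rangle$. In particular $S$ contains the $\hfr$-component $h$ of $X$. Now take $X$ in the open dense subset of the real form $i\mathfrak k$ where:
\begin{itemize}
\item $h$ separates roots, that is, $\alpha(h)$ are pairwise distinct and nonzero;
\item every root component $X_\alpha$ of $X$ is nonzero.
\end{itemize}
These are nonvanishing conditions on polynomials that are nonzero on $\gfr$, hence nonzero on the Zariski-dense real form $i\mathfrak k$. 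Then $\ad(h)$ separates the root components of each eigencomponent, so the Lie algebra generated by $S$ contains every root space and $h$. Brackets of opposite root spaces then give all of $\hfr$, so this Lie algebra is all of $\gfr$.

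\emph{Step 3: conclusion.} A vector $v$ killed by all $d\rho(\Ad(a)X)$ is killed by the Lie algebra they generate, which is $\gfr$. Since $G$ is connected, this gives $v\in V^{\gfr}=V^G=0$. Hence the operator in \eqref{eq:compression} is positive definite on $V^A$, and in particular an isomorphism.

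I expect the main care to be needed in Step 2, which must simultaneously guarantee genericity inside the real form $i\mathfrak k$ and the generation statement. Positivity (Step 1) requires $X$ to lie in $i\mathfrak k$, which is why the genericity conditions must be checked there rather than on all of $\gfr$.
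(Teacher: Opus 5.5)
Your proof is correct, and it takes a genuinely different route from the paper's.

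\textbf{The paper's route.} The paper works algebraically with the torus $T=C_G(a)^\circ$.
\begin{enumerate}
\item It splits $V^A=(V^A\cap V_0)\oplus(V^A\cap V_{\ne0})$ into zero and nonzero $T$-weight parts.
\item It takes $X_0\in\hfr$ from a generic cocharacter. The $A$-average of $d\rho(X_0)^2$ acts by a positive scalar on each nonzero weight space and by $0$ on $V_0$.
\item It takes $Y_0=\sum_\alpha e_\alpha$. The square of $Y_0$ compresses, on the zero-weight block, to the Casimir operator, which is invertible because $V^{\gfr}=0$.
\item It uses the regularity of $a$ to show that $\Pi$ kills the cross term $d\rho(X_0)d\rho(Y_0)+d\rho(Y_0)d\rho(X_0)$ on $V^A$.
\item A block-triangular determinant computation then produces $t_0$ with $\det\Pi\,d\rho(X_0+t_0Y_0)^2|_{V^A}\ne0$.
\end{enumerate}

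\textbf{Your route.} You place $A$ in a maximal compact subgroup $K$ and restrict to $X\in i\mathfrak k$. The compressed operator is then positive semidefinite Hermitian on $V^A$, with kernel $V^A\cap\ker d\rho(X)$. So invertibility becomes a kernel condition. That condition follows from $V^{\gfr}=0$ once the $\langle a\rangle$-orbit of $X$ generates $\gfr$ as a Lie algebra, and your Vandermonde argument with the $\hfr$-component $h$ proves this for generic $X$.

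\textbf{What each buys.} Your single positivity statement replaces the Casimir computation, the cross-term cancellation and the determinant argument. It also gives more. The determinant of the compressed operator is a polynomial in $X$ that is nonzero on an open dense subset of the real form $i\mathfrak k$, so a general $X\in\gfr$ works, and the operator is positive definite there. The cost is transcendental input: a compact subgroup lies in a maximal compact subgroup, and that subgroup is a compact real form of $G$. The paper uses only algebraic tools.

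Both proofs use the regular element essentially: the paper to kill the cross terms, you to isolate $h$ by averaging over $\langle a\rangle$. Without such an element the statement can fail. For example, with $A=1$ and the adjoint representation, $d\rho(X)X=0$ for every $X$.

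A minor point: the second averaging identity in your Step 1 is not needed. Once $v\in V^A$ and $d\rho(X)v=0$, you automatically get $d\rho(\Ad(b)X)v=\rho(b)\,d\rho(X)v=0$ for all $b\in A$.
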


\begin{proof}
If $V^A=0$, there is nothing to prove. Assume $V^A\ne0$.
Choose a regular semisimple element $a\in A$, and set
 $T=C_G(a)^\circ,\ \hfr=\Lie T.$
Then $T$ is a maximal torus containing $a$. Since $A$ is
abelian, it centralizes $a$ and therefore normalizes $T$.
Let $\Phi$ be the root system of $(G,T)$, and write
$V=\bigoplus_\lambda V_\lambda,\ V_0=V^T,\ V_{\ne0}=\bigoplus_{\lambda\ne0}V_\lambda.$ For a character $\lambda$ of $T$ and $H\in\hfr$, we write
$\lambda(H)$ for the value of the differential of $\lambda$
at $H$. Because $A$ normalizes $T$, both $V_0$ and
$V_{\ne0}$ are $A$-stable. Hence $V^A=(V^A\cap V_0)\oplus(V^A\cap V_{\ne0}).$
The natural projection $q_0:V\to V_0$  commutes
with $A$, and therefore with $\Pi$.

\smallskip\noindent\textbf{Claim.}
There exist $X_0, Y_0\in \gfr$ such that relative to  the decomposition $V^A=(V^A\cap V_0)\oplus(V^A\cap V_{\ne0})$, we have
\[
 \Pi d\rho(X_0)^2|_{V^A}
 =
 \begin{pmatrix}
  0&0\\
  0&B
 \end{pmatrix},
 \qquad
 \Pi d\rho(Y_0)^2|_{V^A}
 =
 \begin{pmatrix}
  C_{11}&C_{12}\\
  C_{21}&C_{22}
 \end{pmatrix},
\]
with $B$ and $C_{11}$ invertible, and $$\Pi\bigl(
 d\rho(X_0)d\rho(Y_0)+d\rho(Y_0)d\rho(X_0)
 \bigr)|_{V^A}=0.$$

Assume the claim holds. If $V^A\cap V_0=0$, take $X=X_0$.
If $V^A\cap V_{\ne0}=0$, take $X=Y_0$.
In the remaining case,  we put $Q(t)=\Pi d\rho(X_0+tY_0)^2|_{V^A}.$ We show that there is a $t_0\in \C$ such that $Q(t_0)$ is an isomorphism.
By the choice of $X_0$ and $Y_0$, we have
\[
 Q(t)
 =
 \begin{pmatrix}
  t^2C_{11}&t^2C_{12}\\
  t^2C_{21}&B+t^2C_{22}
 \end{pmatrix}.
\]
Put $m=\dim(V^A\cap V_0)$. Factoring $t^2$ from
each of the first $m$ rows gives
\[
 \det Q(t)
 =
 t^{2m}
 \det
 \begin{pmatrix}
  C_{11}&C_{12}\\
  t^2C_{21}&B+t^2C_{22}
 \end{pmatrix}=t^{2m}f(t),
\]
where $f(t)$ is a polynomial with $f(0)=\det
 \begin{pmatrix}
  C_{11}&C_{12}\\
  0&B
 \end{pmatrix}
 =\det(C_{11})\det(B)\ne0.$ Hence $\det Q(t)$ is not the zero polynomial. Choose $t_0\in\C$ outside its finite zero set and set
$X=X_0+t_0Y_0$. Then
\[
 \Pi d\rho(X)^2|_{V^A}:V^A\longrightarrow V^A
\]
is an isomorphism, as required.

It remains to prove the claim. We construct $X_0$ and $Y_0$.

\textbf{Construction of $X_0$:}
Choose a cocharacter $\mu:\mathbb{G}_m\to T$ such that
$\langle\lambda,\mu\rangle\ne0$
for every nonzero weight $\lambda$ occurring in $V$, and
put $X_0=d\mu(1)\in\hfr$. Such a cocharacter exists because
the cocharacter lattice is not contained in a finite union
of proper hyperplanes. We will show that relative to  the decomposition $V^A=(V^A\cap V_0)\oplus(V^A\cap V_{\ne0})$, we have  $\Pi d\rho(X_0)^2|_{V^A}
=
\begin{pmatrix}
	0&0\\
	0&B
\end{pmatrix}$ with $B$ invertible.
Consider the operator
\[
 \mathcal B
 =
 \frac1{|A|}\sum_{b\in A}
 \rho(b)d\rho(X_0)^2\rho(b)^{-1}
 =
 \frac1{|A|}\sum_{b\in A}
 d\rho(\Ad(b)X_0)^2.
\]
This operator commutes with $A$. Moreover, every
$\Ad(b)X_0$ belongs to $\hfr$, since $A$ normalizes $T$.
Thus $\mathcal B$ preserves each weight space $V_\lambda$
and acts there by the scalar
\[
 \beta_\lambda
 =
 \frac1{|A|}\sum_{b\in A}
 \lambda(\Ad(b)X_0)^2.
\]
Each number $\lambda(\Ad(b)X_0)$ is an integer: it is
the pairing of $\lambda$ with the cocharacter
$b\mu b^{-1}$ of $T$. Consequently, $\beta_\lambda>0$
for every nonzero weight $\lambda$ occurring in $V$,
because all summands are nonnegative and the summand
corresponding to $b=1$ is nonzero. Also $\beta_0=0$. For $v\in V^A$, we have $\rho(b)^{-1}v=v$, so $\mathcal Bv=\Pi d\rho(X_0)^2v.$ It follows that $ \Pi d\rho(X_0)^2|_{V^A}=\begin{pmatrix}
  0&0\\
  0&B
 \end{pmatrix}$ with $B$ invertible.
\\

\textbf{Construction of $Y_0$:}  Let $\kappa(x,y)=\tr_{\gfr}\bigl(\ad(x)\ad(y)\bigr)$ be the  Killing form. Fix a positive system $\Phi^+\subset\Phi$. For every
$\alpha\in\Phi^+$, choose a nonzero
$e_\alpha\in\gfr_\alpha$ and choose
$e_{-\alpha}\in\gfr_{-\alpha}$ such that
$
\kappa(e_\alpha,e_{-\alpha})=1.
$ Choose $\kappa$-dual bases $(h_j)$ and $(h^j)$ of
$\hfr$, so that $\kappa(h_j,h^k)=\delta_j^k.$

Set $Y_0=\sum_{\alpha\in\Phi}e_\alpha$. Since $q_0$ commutes
with $\Pi$, the block $C_{11}$ in the claim is
$\Pi q_0d\rho(Y_0)^2|_{V^A\cap V_0}$.

Consider the Casimir operator $\Omega_V=\sum_j d\rho(h_j)d\rho(h^j) + \sum_{\alpha\in\Phi}
 d\rho(e_\alpha)d\rho(e_{-\alpha})$. Then $\Omega_V:V\rightarrow V$ is an isomorphism. In fact, by the Casimir eigenvalue formula \cite[\S22.3, p.~122]{Hum72}, the Casimir element associated with the Killing form acts by a nonzero scalar on every nontrivial finite-dimensional irreducible $\gfr$-module, and by zero on the trivial module. Complete reducibility
\cite[\S6.3]{Hum72} therefore gives $\ker\Omega_V=V^{\gfr}.$ Since $G$ is connected and $V^G=0$, we have $V^{\gfr}=V^G=0$. Hence $\Omega_V$ is an isomorphism.  Since $\Omega_V$ commutes with both $A$ and $T$,
it preserves $V^A\cap V_0$. Its restriction $\Omega_V|_{V^A\cap V_0}:V^A\cap V_0\rightarrow V^A\cap V_0$ is therefore invertible. Furthermore, we have $q_0d\rho(Y_0)^2|_{V^A\cap V_0}=\Omega_V|_{V^A\cap V_0}.$ In fact, for $v\in V_0$, we have $d\rho(e_\alpha)d\rho(e_\beta)v\in V_{\alpha+\beta},$
so only the terms with $\beta=-\alpha$ contribute
to the zero-weight component of $d\rho(Y_0)^2v$.
Consequently,
\[
 q_0d\rho(Y_0)^2v
 =
 \sum_{\alpha\in\Phi}
 d\rho(e_\alpha)d\rho(e_{-\alpha})v
 =
 \Omega_Vv.
\]
Since $\Omega_V$ preserves $V^A\cap V_0$, applying $\Pi$ gives
$C_{11}=\Omega_V|_{V^A\cap V_0}$, which is invertible.

Finally we show the   equality $\Pi\bigl(d\rho(X_0)d\rho(Y_0)+d\rho(Y_0)d\rho(X_0)
\bigr)|_{V^A}=0.$ Since $a$ is regular semisimple, we have $\alpha(a)\ne1$ for all $\alpha\in\Phi$. Moreover, the equality $\Pi\rho(a)=\Pi$ implies that $\Pi$
annihilates every eigenspace of $\rho(a)$ with
eigenvalue different from $1$. Now let $v\in V^A$. Then
$\rho(a)v=v$. Also $\rho(a)$ commutes with $d\rho(X_0)$,
while
$\rho(a)d\rho(e_\alpha)\rho(a)^{-1} =\alpha(a)d\rho(e_\alpha).
$
It follows that both
$d\rho(X_0)d\rho(e_\alpha)v
 \ \text{and}\
 d\rho(e_\alpha)d\rho(X_0)v$
belong to the $\alpha(a)$-eigenspace of $\rho(a)$,
and hence are killed by $\Pi$. Summing over
$\alpha\in\Phi$ gives $\Pi\bigl(d\rho(X_0)d\rho(Y_0)+d\rho(Y_0)d\rho(X_0)
\bigr)|_{V^A}=0.$
\end{proof}

\subsection{Vanishing sequences and principal parts}
\label{subsec:vanishing-principal-parts}

Throughout this subsection, $C$ is a smooth connected projective
curve over $\C$. We collect some facts about vanishing sequences and
principal parts.

\smallskip\noindent
\textbf{Vanishing sequences.}
Let $M$ be a line bundle on $C$, and let
$S\subset H^0(C,M)$ be a subspace of dimension $N>0$. For $x\in C$ and
$m\geq0$, put
\[
 S(-mx):=S\cap H^0(C,M(-mx)).
\]

\begin{definition}
   The \emph{vanishing sequence} of $S$ at $x$ is the increasing
sequence
\[
 0\leq\nu_0(S,x)<\cdots<\nu_{N-1}(S,x)
\]
of vanishing orders of the nonzero sections of $S$ at $x$.
\end{definition}
 Equivalently, these are the integers
$m$ for which
\[
 \dim S(-mx)-\dim S(-(m+1)x)=1.
\]
Indeed, each successive quotient injects into the one-dimensional
fiber $M(-mx)|_x$, and the filtration eventually becomes zero.
In particular,
\begin{equation}\label{eq:vanishing-sequence-count}
 \dim S(-mx)=\#\{i:\nu_i(S,x)\geq m\}.
\end{equation}

\begin{proposition}\cite[Corollary 3.2]{DP24}
\label{prop:generic-vanishing-sequences}
Let $M$ be a line bundle on $C$, and let
$S\subset H^0(C,M)$ have dimension $N>0$. There is a nonempty
Zariski open subset $U\subset C$ such that, for every $x\in U$,
\[
 (\nu_0(S,x),\ldots,\nu_{N-1}(S,x))=(0,1,\ldots,N-1).
\]
Consequently,
\begin{equation}\label{eq:generic-vanishing-count}
 \dim S(-mx)=\max\{N-m,0\}\qquad(m\geq0).
\end{equation}
\end{proposition}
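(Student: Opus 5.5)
The plan is the classical Wronskian argument. Fix a basis $s_1,\ldots,s_N$ of $S$, and consider the bundle of principal parts $P^{N-1}(M)$ of order $N-1$. Taking $(N-1)$-jets gives a morphism of vector bundles of rank $N$,
\[
 \mathrm{ev}:S\otimes\OO_C\longrightarrow P^{N-1}(M),
\]
whose fiber at $x$ sends a section to its jet of order $N-1$ at $x$. Its determinant is the Wronskian
\[
 W=\det(\mathrm{ev})\in H^0\bigl(C,M^{\otimes N}\otimes K_C^{\otimes N(N-1)/2}\bigr).
\]
In a local coordinate $z$ and a local trivialization of $M$, $W$ is $\det\bigl(s_i^{(j)}\bigr)_{1\leq i\leq N,\,0\leq j\leq N-1}$. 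I would take $U$ to be the complement of the zero set of $W$. The argument therefore reduces to two points: $W$ is not identically zero, and at every point where $W\neq0$ the vanishing sequence is $(0,1,\ldots,N-1)$.

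The second point is formal. If $W(x)\neq0$, then $\mathrm{ev}_x:S\to P^{N-1}(M)|_x$ is an isomorphism. The subspace $S(-mx)$ for $m\leq N$ is exactly the preimage of the jets vanishing to order $m$. That subspace of $P^{N-1}(M)|_x$ has codimension $m$ (for $m\leq N$), so $\dim S(-mx)=N-m$. In particular $S(-Nx)=0$, so each successive quotient of the filtration is one-dimensional for $0\leq m\leq N-1$. By the characterization of vanishing orders given before the statement, $\nu_i(S,x)=i$, and \eqref{eq:generic-vanishing-count} follows from \eqref{eq:vanishing-sequence-count}.

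The main obstacle is the nonvanishing of $W$. This is where characteristic zero is used, and I would argue as follows. Near a point $x_0$, choose a coordinate $z$ and trivialize $M$, so the $s_i$ become linearly independent holomorphic functions $f_i$. Change basis by Gaussian elimination on Taylor expansions at $x_0$, so that the $f_i$ have pairwise distinct orders of vanishing $a_1<\cdots<a_N$ at $x_0$. This is possible because a linear combination can always raise the order unless the leading orders are distinct. The Wronskian changes only by a nonzero constant factor. Its lowest-order term at $x_0$ is then
\[
 \det\Bigl(\textstyle a_i(a_i-1)\cdots(a_i-j+1)\Bigr)_{i,j}\cdot z^{\sum a_i-N(N-1)/2},
\]
up to the product of the leading coefficients. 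This is a Vandermonde-type determinant equal to $\prod_{i<k}(a_k-a_i)\neq0$ over $\C$. Hence $W\not\equiv0$. Since $C$ is connected, its zero set is finite, and $U$ is a nonempty Zariski open subset, as required.
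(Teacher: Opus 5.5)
Your proof is correct and complete. The paper gives no argument of its own for this statement: it quotes it from \cite[Corollary~3.2]{DP24} and uses it only in the proof of Theorem~\ref{thm:vanishing}, for $K_C$ and the finitely many $K_C\otimes\eta_\chi$. Your Wronskian argument is therefore a self-contained replacement for the citation rather than an alternative to a proof in the text.

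The steps all check out:
\begin{enumerate}
\item $\det P^{N-1}(M)\simeq M^{\otimes N}\otimes K_C^{\otimes N(N-1)/2}$, from the filtration with graded pieces $M\otimes K_C^{\otimes j}$.
\item The local formula $\det\bigl(s_i^{(j)}\bigr)$ holds up to the constant $\prod_j j!$, which comes from the jet frame and is harmless.
\item The orders $a_i$ are finite because $C$ is integral, so no nonzero section vanishes to infinite order.
\item In the lowest-order coefficient every permutation contributes at order $\sum_i a_i-N(N-1)/2$. The entries with $a_i<j$ are handled correctly, since there both the falling factorial and the corresponding Taylor coefficient vanish. The coefficient is therefore $\prod_i c_i\cdot\prod_{i<k}(a_k-a_i)\neq0$.
\end{enumerate}

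The closest argument actually written in the paper is Lemma~\ref{lem:jets}. It uses the same identification $\ker j_x^{p-1}=Z(-px)$ and the derivative-span sheaves $\mathcal S_p$ to get monotonicity of the increments. However, it has to import the termination $Z(-(g-1)x)=0$ from Theorem~\ref{thm:vanishing}. In the line-bundle case, your nonvanishing Wronskian gives $\dim S(-mx)=\max\{N-m,0\}$ directly, with no external vanishing input. As you note, the Vandermonde step is exactly where characteristic zero is used.
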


\smallskip\noindent
\textbf{Principal parts.}
Let $F$ be a vector bundle on $C$, fix $x\in C$, and let $n\geq1$.
\begin{definition}
    The space of $F$-valued principal parts at $x$ of order
at most $n$ is the space \[
 \operatorname{PP}^{\leq n}_x(F)
 :=H^0\bigl(C,F(nx)/F\bigr).
\]
\end{definition}

 Choose a uniformizer $z$ at $x$, and write
$\widehat F_x$ for the space $H^0(D_x,F)$, where $D_x=\operatorname{Spec}\C[[z]]$ is the formal disc. Then
\[
 \operatorname{PP}^{\leq n}_x(F)
 \simeq z^{-n}\widehat F_x/\widehat F_x.
\]
For $\xi\in z^{-n}\widehat F_x$, we define the principal part of $\xi$  by the class
\[
 \operatorname{pp}_x(\xi)
 :=\xi\bmod\widehat F_x.
\]
Locally this is represented by the negative-power terms
of the Laurent expansion of $\xi$. The quotient and its class
are intrinsic, even though this representative depends on the
coordinate and frame.

The exact sequence
\begin{equation}\label{eq:principal-parts-exact-sequence}
 0\longrightarrow F\longrightarrow F(nx)
 \longrightarrow F(nx)/F\longrightarrow0
\end{equation}
gives a connecting homomorphism
\begin{equation}\label{eq:principal-parts-boundary}
 \partial^F_{x,n}:\operatorname{PP}^{\leq n}_x(F)
 \longrightarrow H^1(C,F).
\end{equation}
We write
\begin{equation}\label{eq:principal-parts-class-notation}
 [\xi]:=\partial^F_{x,n}\bigl(\operatorname{pp}_x(\xi)\bigr).
\end{equation}
The connecting maps are compatible with increasing $n$, so this
class is independent of the chosen pole bound.

\begin{proposition}
\label{prop:principal-parts-duality}
Let $F$ be a vector bundle on $C$, let $x\in C$, and let
$\xi\in z^{-n}\widehat F_x$ for some $n\geq1$.
\begin{enumerate}
\item\label{item:principal-parts-lift}
The class $[\xi]$ is zero if and only if there is a section
$t\in H^0(C,F(nx))$ satisfying
\[
 \operatorname{pp}_x(t)=\operatorname{pp}_x(\xi).
\]

\item\label{item:principal-parts-residue}
For every $\omega\in H^0(C,F^\vee\otimes K_C)$, the Serre
pairing is given by
\begin{equation}\label{eq:principal-parts-residue-pairing}
 \bigl\langle[\xi],\omega\bigr\rangle
 =\operatorname{Res}_x\bigl(\omega(\xi)\bigr).
\end{equation}

\item\label{item:principal-parts-naturality}
For every morphism of vector bundles $\psi:F\to F'$ on $C$,
\begin{equation}\label{eq:principal-parts-naturality}
 H^1(C,\psi)([\xi])=[\psi(\xi)]\in H^1(C,F').
\end{equation}
\end{enumerate}
\end{proposition}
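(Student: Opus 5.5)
The three assertions are standard facts about the Čech/adelic description of $H^1$ on a curve, and I will prove them directly from the long exact cohomology sequence of \eqref{eq:principal-parts-exact-sequence}, using the formal-disc description of $\operatorname{PP}^{\leq n}_x(F)$. Throughout, fix $n$ with $\xi\in z^{-n}\widehat F_x$. By the compatibility of the connecting maps with increasing $n$, I may enlarge $n$ whenever convenient.

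\textbf{Item (i).} Take the long exact sequence
\[
 H^0(C,F(nx))\longrightarrow H^0\bigl(C,F(nx)/F\bigr)\xrightarrow{\partial^F_{x,n}}H^1(C,F).
\]
Since $F(nx)/F$ is a skyscraper at $x$, its global sections are $z^{-n}\widehat F_x/\widehat F_x$. The first map sends $t$ to $\operatorname{pp}_x(t)$. Exactness at the middle term therefore says that $[\xi]=0$ if and only if $\operatorname{pp}_x(\xi)$ lies in this image, which is the claim. The only subtlety is that a lift $t$ might require a larger pole order $n'>n$. But a global section of $F(n'x)$ whose principal part has order at most $n$ already lies in $F(nx)$, so the statement is independent of $n$.

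\textbf{Item (iii).} The morphism $\psi$ induces a morphism of the short exact sequences \eqref{eq:principal-parts-exact-sequence} for $F$ and $F'$. Naturality of connecting homomorphisms then gives $H^1(C,\psi)\circ\partial^F_{x,n}=\partial^{F'}_{x,n}\circ\operatorname{pp}(\psi)$. Moreover, $\psi$ commutes with taking principal parts on the formal disc, which yields \eqref{eq:principal-parts-naturality}.

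\textbf{Item (ii).} This is the only step needing care, because the normalization of Serre duality must be fixed. I would take the Serre pairing to be defined through the trace map $H^1(C,K_C)\to\C$, given by the residue as in Serre's adelic description. Then I reduce to the case $F=K_C$ in two moves.

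\begin{enumerate}
\item Apply item (iii) to $\omega:F\to K_C$, viewing $\omega$ as a morphism of bundles. This gives $\omega_*[\xi]=[\omega(\xi)]\in H^1(C,K_C)$. By definition of the pairing, $\langle[\xi],\omega\rangle=\operatorname{tr}\bigl(\omega_*[\xi]\bigr)$.
\item It remains to show that for a principal part $\eta\in z^{-n}\widehat{K}_{C,x}$, the trace of $\partial_{x,n}^{K_C}(\eta)$ equals $\Res_x\eta$. This is exactly the standard identification of the trace: the composite $H^0(K_C(nx)/K_C)\to H^1(K_C)\xrightarrow{\operatorname{tr}}\C$ is the residue. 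It vanishes on images of global meromorphic forms by the residue theorem, consistent with item (i).
\end{enumerate}

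I expect this residue–trace compatibility to be the main point to justify. I would cite it from Serre's \emph{Groupes algébriques et corps de classes}, Ch.~II, or verify it with a Čech computation on the cover $\{C\setminus x,\ \text{a disc around }x\}$. In that computation, the class $[\eta]$ is represented by the Laurent tail on the overlap, and the trace is normalized to be the residue. Any sign convention is absorbed into the choice of identification.
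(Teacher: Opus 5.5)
Your proposal is correct and follows the paper's proof: (i) comes from exactness of $H^0(C,F(nx))\to\operatorname{PP}^{\leq n}_x(F)\to H^1(C,F)$, (iii) from naturality of connecting homomorphisms, and (ii) by applying (iii) to $\omega\colon F\to K_C$ and identifying the trace of a differential principal part at $x$ with its residue there (the paper cites Forster \S\S17--18 for this last step rather than Serre). Your added remark that a lift with larger pole order automatically lies in $F(nx)$ is a harmless refinement that the paper leaves implicit.
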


\begin{proof}
The first assertion follows from exactness of
\[
 H^0(C,F(nx))\longrightarrow
 \operatorname{PP}^{\leq n}_x(F)
 \xrightarrow{\partial^F_{x,n}}H^1(C,F).
\]
The third follows from naturality of connecting homomorphisms.

For the second assertion, regard $\omega$ as a morphism
$F\to K_C$. Naturality gives
\[
 H^1(C,\omega)([\xi])=[\omega(\xi)]\in H^1(C,K_C).
\]
The Serre pairing is the trace of this class, and the trace
of a differential principal part supported at $x$ is its
residue there; see \cite[\S\S17--18]{For81}.
This proves \eqref{eq:principal-parts-residue-pairing}.
\end{proof}

Set $C^\circ=C\setminus\{x\}$ and
$D_x^\times=\operatorname{Spec}\C((z))$, with $D_x$ as above.
Let $\varphi$ denote restriction from $C^\circ$ to the punctured
disc, expressed in a chosen frame on $D_x$, and define
\begin{equation}\label{eq:principal-parts-matching-map}
 \begin{aligned}
 \delta_{F,x}:\;&\Gamma(C^\circ,F)\oplus\Gamma(D_x,F)
 \longrightarrow\Gamma(D_x^\times,F),\\
 &\delta_{F,x}(a,b)=b-\varphi(a).
 \end{aligned}
\end{equation}

\begin{proposition}
\label{prop:principal-parts-matching}
With the notation above, there is an exact sequence
\[
 0\longrightarrow H^0(C,F)
 \longrightarrow\Gamma(C^\circ,F)\oplus\Gamma(D_x,F)
 \xrightarrow{\delta_{F,x}}\Gamma(D_x^\times,F)
 \xrightarrow{\ \xi\mapsto[\xi]\ }H^1(C,F)
 \longrightarrow0.
\]
The first map sends a global section to its two restrictions.
In particular,
\begin{equation}\label{eq:principal-parts-matching-cohomology}
 \ker\delta_{F,x}=H^0(C,F),\qquad
 \operatorname{coker}\delta_{F,x}\simeq H^1(C,F),
\end{equation}
and, for $\xi\in\Gamma(D_x^\times,F)$,
\begin{equation}\label{eq:principal-parts-matching-solvability}
 \delta_{F,x}(a,b)=\xi\text{ has a solution}
 \quad\Longleftrightarrow\quad [\xi]=0.
\end{equation}
\end{proposition}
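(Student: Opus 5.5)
This is the standard Čech/adelic description of the cohomology of $F$ with respect to the cover of $C$ by $C^\circ$ and the formal disc $D_x$. The plan is to read the sequence off the Mayer--Vietoris sequence for this cover and then identify the last map with $\xi\mapsto[\xi]$.

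\emph{The sequence itself.} Since $C^\circ$ is affine, $H^1(C^\circ,F)=0$. By the Beauville--Laszlo gluing theorem, $F$ is recovered from $F|_{C^\circ}$, the completed stalk $\widehat F_x$, and the identification over $D_x^\times$. Concretely, one uses the exact sequence
\[
 0\to F\to j_*(F|_{C^\circ})\to j_*(F|_{C^\circ})/F\to 0,
\]
with $j:C^\circ\hookrightarrow C$. The quotient is a skyscraper at $x$ with global sections $\Gamma(D_x^\times,F)/\Gamma(D_x,F)$; this can be checked via $\varinjlim_n F(nx)/F$ and the principal parts description above. The sheaf $j_*(F|_{C^\circ})$ has $H^1=H^1(C^\circ,F)=0$ because $j$ is affine. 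The long exact sequence therefore gives
\[
 0\to H^0(C,F)\to\Gamma(C^\circ,F)\to\Gamma(D_x^\times,F)/\Gamma(D_x,F)\to H^1(C,F)\to0.
\]
Next, rewrite the middle map using $\Gamma(D_x,F)$: replace the target $\Gamma(D_x^\times,F)/\Gamma(D_x,F)$ by $\Gamma(D_x^\times,F)$ and add $\Gamma(D_x,F)$ to the source. This produces exactly the four-term sequence with $\delta_{F,x}(a,b)=b-\varphi(a)$. Exactness at each spot follows from the three-term sequence by a short diagram chase. Injectivity of $\Gamma(D_x,F)\to\Gamma(D_x^\times,F)$ ensures that the kernel of $\delta_{F,x}$ is the set of pairs $(a,b)$ that agree on the punctured disc, and these glue to global sections.

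\emph{Identifying the connecting map.} We must check that the map $\Gamma(D_x^\times,F)\to H^1(C,F)$ is $\xi\mapsto[\xi]$. A Laurent element $\xi$ lies in $z^{-n}\widehat F_x$ for some $n$. Its image in the skyscraper quotient is $\operatorname{pp}_x(\xi)\in \operatorname{PP}^{\le n}_x(F)\subset\varinjlim_n F(nx)/F$. The connecting map of the sequence for $j_*F$ restricts, on the subsheaf $F(nx)$, to $\partial^F_{x,n}$ by naturality of connecting homomorphisms applied to the inclusion of short exact sequences. Hence the image of $\xi$ is exactly $[\xi]$ as defined in \eqref{eq:principal-parts-class-notation}.

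The two displayed consequences are then immediate. Equation \eqref{eq:principal-parts-matching-cohomology} is exactness at the ends, and \eqref{eq:principal-parts-matching-solvability} is exactness at $\Gamma(D_x^\times,F)$. The only mildly delicate point is this identification, together with checking that the frame choice in $\varphi$ is harmless. The latter holds because everything is intrinsic after passing to completed stalks.
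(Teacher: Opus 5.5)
Your proof is correct and is essentially the paper's argument. The paper takes the direct limit over $n$ of the long exact sequences for $0\to F\to F(nx)\to F(nx)/F\to0$, using Serre vanishing $H^1(C,F(nx))=0$ for $n\gg0$; you instead pass to the limit at the sheaf level, $j_*(F|_{C^\circ})=\varinjlim_n F(nx)$, use $H^1(C^\circ,F)=0$ for the affine curve $C^\circ$, and then reach the same three-term sequence, the same identification of its last map with $\xi\mapsto[\xi]$ via the connecting maps $\partial^F_{x,n}$, and the same rewriting as the four-term sequence.
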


\begin{proof}
The cohomology sequence of
\eqref{eq:principal-parts-exact-sequence} is
\[
 \begin{aligned}
 0\longrightarrow H^0(C,F)\longrightarrow H^0(C,F(nx))
 &\longrightarrow\operatorname{PP}^{\leq n}_x(F)\\
 &\longrightarrow H^1(C,F)\longrightarrow H^1(C,F(nx))
 \longrightarrow0.
 \end{aligned}
\]
The line bundle $\OO_C(x)$ is ample, so Serre vanishing gives
$H^1(C,F(nx))=0$ for $n\gg0$. Moreover,
\[
 \begin{aligned}
 \bigcup_{n\geq0}H^0(C,F(nx))&=\Gamma(C^\circ,F),\\
 \varinjlim_n\operatorname{PP}^{\leq n}_x(F)
 &=\widehat F_x[z^{-1}]/\widehat F_x
 =\Gamma(D_x^\times,F)/\Gamma(D_x,F).
 \end{aligned}
\]
Taking the direct limit therefore gives an exact sequence
\[
 \begin{aligned}
 0\longrightarrow H^0(C,F)\longrightarrow\Gamma(C^\circ,F)
 &\longrightarrow\Gamma(D_x^\times,F)/\Gamma(D_x,F)\\
 &\longrightarrow H^1(C,F)\longrightarrow0.
 \end{aligned}
\]
The last nonzero map sends the principal part of $\xi$ to $[\xi]$.
Quotienting $\Gamma(D_x^\times,F)$ first by the regular disc
sections and then by the image of $\Gamma(C^\circ,F)$ gives
$\operatorname{coker}\delta_{F,x}$. The minus sign in
$\delta_{F,x}$ does not change this image. Its kernel consists of
pairs of matching restrictions, hence of global sections of $F$. This proves the asserted exact sequence and the
solvability criterion.
\end{proof}

\subsection{The vanishing theorem}
We use the following deformation lemma to prove
Theorem~\ref{thm:interp}.

\begin{lemma}
\label{lem:one-point-gluing-deformation}
Fix a $G$-bundle $P_0$, a point $p\in C$, a uniformizer $z$ at $p$,
and a trivialization of $P_0$ on the formal disc at $p$. Given
\[
 u=u(z)\in\C[z,z^{-1}],\qquad X\in\gfr,
\]
there is an algebraic family $P_\eps$ of $G$-bundles parametrized by
$\mathbb A^1=\operatorname{Spec}(\C[\eps])$, with special fiber $P_0$,
obtained by multiplying the original gluing from $C\setminus\{p\}$
to the disc on the left by
$\gamma(\eps,z)\in G(\C[\eps]((z)))$.
The family stays in the component of $P_0$, and
\begin{equation}\label{eq:secondjet}
 \rho\bigl(\gamma(\eps,z)\bigr)
 =I_V+\eps uA_X+\frac{\eps^2u^2}{2}A_X^2
 \pmod{\eps^3},
\end{equation}
where $A_X=d\rho(X)$. The gluing of $E_\rho(P_\eps)$ is obtained
by multiplying that of $E_\rho(P_0)$ on the left by
$\rho(\gamma(\eps,z))$.

\end{lemma}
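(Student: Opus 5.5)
The plan is to take $\gamma(\eps,z)=\exp(\eps u(z)X)$, viewed as a $\C[\eps]((z))$-point of $G$, and to carry out the whole construction in the Beauville--Laszlo gluing framework. Since $u\in\C[z,z^{-1}]$ is a Laurent polynomial, the element $\eps u(z)X$ is a $\C[\eps]((z))$-point of $\gfr$. Because $\eps$ is not nilpotent in $\C[\eps]$, the exponential has to be made algebraic. The first attempt is to choose a faithful representation $G\subset\GL_N$ and use the one-parameter subgroup through $X$. A cleaner route is to write $X$ in Jordan form $X=X_s+X_n$ and use a homomorphism $\mathbb G_a\to G$ or a torus. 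In practice the following works: decompose $X$ as a sum of a nilpotent element and a semisimple element, and use algebraic one-parameter subgroups $\mathbb G_a\to G$, $s\mapsto\exp(sX_n)$, together with cocharacters. The semisimple part is less direct, since $\exp(sX_s)$ is not algebraic in $s$ unless $X_s$ is rational. So the simplest choice is to reduce to nilpotent or cocharacter directions by linearity: write $X=\sum_k X_k$ with each $X_k$ a root vector $e_\alpha$ or an element $d\mu(1)$ of a Cartan subalgebra. For a root vector, $\exp(sX)$ is a morphism $\mathbb G_a\to G$. Finally, set $\gamma=\prod_k\exp(\eps u X_k)$. Modulo $\eps^3$, however, the product of exponentials differs from $\exp(\eps uX)$ at second order by commutator terms, which would spoil \eqref{eq:secondjet}. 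This is the main point to handle.

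The cleanest handling is to work only modulo $\eps^3$ where needed, but the lemma asks for a family over $\mathbb A^1$. I would therefore define $\gamma(\eps,z)$ as any $\C[\eps]((z))$-point of $G$ whose reduction modulo $\eps^3$ equals the truncated exponential $1+\eps uX+\tfrac12\eps^2u^2X^2$ in a faithful representation. Such a point exists by smoothness of $G$ (formal lifting along $\C[\eps]/\eps^3\to\C[\eps]$ is not automatic, though). An alternative is to choose $\gamma=\exp(\eps u X_n)\cdot\mu(1+\eps u c)$ for a cocharacter $\mu$ when $X$ is semisimple rational. In the paper's application, $X=X_0+t_0Y_0$ can be taken generic, and conjugating by $G$ we may assume it is either nilpotent or rational semisimple. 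Thus I expect the main obstacle to be producing an honest algebraic $\mathbb A^1$-family with the prescribed second jet. I would resolve it by the two-step lift: first take the $\C[\eps]/\eps^3$-point $\exp_{\leq2}(\eps uX)$, which lies in $G$ because $G$ is cut out by polynomial equations that the truncated exponential satisfies modulo $\eps^3$. Then extend it to $\C[\eps]$ using that $G$ is a smooth affine variety, so that a morphism $\operatorname{Spec}\C[\eps]/\eps^3\to G((z))$ extends to $\mathbb A^1$ after shrinking the pole order. This is done by choosing a curve in $G$ through $1$ with given $2$-jet, which exists because $G$ is rational near $1$ (for example via the big cell). Applying $\rho$, which is a homomorphism and polynomial, then gives \eqref{eq:secondjet}, with $A_X=d\rho(X)$ and $A_X^2$ from the second-order term.

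With $\gamma$ fixed, define $P_\eps$ by regluing $P_0|_{(C\setminus p)\times\mathbb A^1}$ and the trivial bundle on the disc along the punctured disc, using the old transition function multiplied on the left by $\gamma$. Beauville--Laszlo gluing produces a $G$-bundle on $C\times\mathbb A^1$, and at $\eps=0$ we have $\gamma=1$, so the special fiber is $P_0$. The associated bundle is glued by $\rho$ applied to the transition function, giving the statement about $E_\rho(P_\eps)$. Finally, the topological type is locally constant in the connected family over $\mathbb A^1$, so all fibers lie in the component of $P_0$.
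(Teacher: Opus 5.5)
There is a genuine gap, and it is the step you flagged yourself. The proposal never produces $\gamma(\eps,z)\in G(\C[\eps]((z)))$ with the prescribed second jet, and none of the suggested resolutions works.

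\emph{The big-cell curve.} A ``curve in $G$ through $1$ with given $2$-jet'' obtained from rationality of the big cell is only a rational curve, defined on an open subset of $\mathbb A^1$. Substituting $s=\eps u(z)$ requires an everywhere-defined morphism $\mathbb A^1\to G$. Denominators such as $1-a\eps z^{-1}=z^{-1}(z-a\eps)$ are not units in $\C[\eps]((z))$: already in $\C(\eps)((z))$ the inverse of $z-a\eps$ has coefficients involving $\eps^{-k-1}$. Moreover, a morphism $\mathbb A^1\to U^-TU^+$ has constant $T$-coordinate, since $\C[s]^\times=\C^\times$, and then it cannot match second jets in general. For example, $\exp(s(e+f))\in\SL_2$ has $(1,1)$-entry $1+s^2/2+\cdots$, whereas every $u^-(a(s))\,u^+(b(s))$ has $(1,1)$-entry $1$. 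So a polynomial curve with the right jet must leave the big cell in this example, and rationality near $1$ cannot produce it.

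\emph{The other routes.} The cocharacter variant $\mu(\lambda(\eps u))$ fails for the same reason: $\lambda(\eps u)$ is not a unit in $\C[\eps]((z))$ when $\lambda$ is nonconstant and $u$ has a pole. That is exactly the case used later, with $u=z^{-1}$ or $z^{-a}+cz^{-1}$. Smoothness of $G$ only lifts along nilpotent thickenings. It yields points over $\C((z))[[\eps]]$, such as $\exp(\eps uX)$ itself, whose pole order grows with the power of $\eps$; these are not points of $G(\C[\eps]((z)))$, and there is no way to ``shrink the pole order''. Finally, restricting to nilpotent or rational semisimple $X$ changes the statement, which is for every $X\in\gfr$.

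\emph{The missing idea.} You were right that exponentials of nilpotents are what is available. For nilpotent $X_i$, $s\mapsto\exp(sX_i)$ is a homomorphism $\mathbb G_a\to G$, so $\exp(\eps uX_i)\in G(\C[\eps][z,z^{-1}])$. Nilpotent elements span the semisimple $\gfr$, since their span is an ideal meeting every simple factor. So drop the Cartan summands and write $X=X_1+\dots+X_t$ with every $X_i$ nilpotent. The commutator terms are then removed by symmetrizing, i.e.\ by taking the palindromic product
\[
 \gamma(\eps,z)=\Bigl(\prod_{i=1}^t\exp(\eps uX_i/2)\Bigr)\Bigl(\prod_{i=t}^{1}\exp(\eps uX_i/2)\Bigr).
\]
Its $\eps$-coefficient under $\rho$ is $uA_X$, and $\gamma(-\eps,z)=\gamma(\eps,z)^{-1}$. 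Write $\rho(\gamma)=I_V+\eps B_1+\eps^2B_2+O(\eps^3)$. The $\eps^2$-coefficient of $\rho(\gamma(\eps,z))\rho(\gamma(-\eps,z))=I_V$ is $2B_2-B_1^2=0$, so $B_2=u^2A_X^2/2$. Alternatively, multiply the ordered product by factors $\exp(\eps^2u^2N_k)$ with nilpotent $N_k$ summing to $-\tfrac12\sum_{i<j}[X_i,X_j]$.

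With this $\gamma$, the rest of your argument matches the paper's proof. That is Beauville--Laszlo gluing over $\C[\eps]$, special fiber $P_0$ at $\eps=0$, gluing of $E_\rho(P_\eps)$ by $\rho(\gamma)$, and constancy of topological type over the connected base.
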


\begin{proof}
Nilpotent elements span the semisimple Lie algebra $\gfr$: their span is
invariant under the adjoint action and is nonzero on every simple ideal.
Write $X=X_1+\cdots+X_t$ with each $X_i$ nilpotent, and put
\begin{equation}\label{eq:palindrome}
 \gamma(\eps,z)=
 \left(\prod_{i=1}^t\exp(\eps u(z)X_i/2)\right)
 \left(\prod_{i=t}^1\exp(\eps u(z)X_i/2)\right).
\end{equation}

Each nilpotent $X_i$ defines an algebraic one-parameter unipotent
subgroup, so $\gamma(\eps,z)\in G(\C[\eps]((z)))$.
The coefficient of $\eps$ in $\rho(\gamma)$ is $uA_X$.
The reversed order of the second product gives
$\gamma(-\eps,z)=\gamma(\eps,z)^{-1}$.
Comparing the coefficients of $\eps^2$ in
$\rho(\gamma(\eps,z))\rho(\gamma(-\eps,z))=I_V$ shows that
the quadratic coefficient in $\rho(\gamma)$ is $u^2A_X^2/2$.
This proves \eqref{eq:secondjet}.

Apply Beauville--Laszlo gluing~\cite{BL95} over $\C[\eps]$, using the
original bundles on $C\setminus\{p\}$ and on the formal disc, and the
modified identification on the punctured disc. Gluing is compatible with
tensor products, so the $G$-structure glues as well. Since
$\gamma(0,z)=1$, the special fiber is $P_0$. The parameter space is
connected, so the topological type is unchanged.
\end{proof}

\begin{theorem}\label{thm:vanishing}
Let $g\geq2$, let $G$ be connected reductive, and let $(\rho,V)$ be a finite-dimensional algebraic representation satisfying $V^G=0$ and $ \rho|_{Z(G)}=1$. For every $\tau$ and $Q$ as above, a general pair $(P,x)$ in either $\MM_G^\tau\times C$ or $\MM_{G,Q}^{\tau}\times C$ satisfies
\begin{equation}\label{eq:vanishing}
 H^0\bigl(C,E_\rho(P)((g-1)x)\bigr)  =H^1\bigl(C,E_\rho(P)((g-1)x)\bigr)=0.
\end{equation}
\end{theorem}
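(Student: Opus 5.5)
The plan is to reduce to a semisimple group, exhibit one explicit pair $(P_0,x)$ where the failure of vanishing is fully controlled, and then remove that failure by the gluing deformation of Lemma~\ref{lem:one-point-gluing-deformation}.

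\emph{Reductions.} By Lemma~\ref{lem:adjoint-reduction} it suffices to treat $G_{\mathrm{ad}}$, which is semisimple, acting on $V$. Note that $V^{G_{\mathrm{ad}}}=V^G=0$. Put $F_P=E_\rho(P)((g-1)x)$. By Proposition~\ref{prop:associated-descent}, $\deg E_\rho(P)=0$, so
\[
 \chi(F_P)=r(g-1)+r(1-g)=0 .
\]
Hence $H^0(C,F_P)=0$ if and only if $H^1(C,F_P)=0$. The vanishing locus is open in $\MM_G^\tau\times C$ by semicontinuity, and $\MM_G^\tau$ is irreducible. So it suffices to find a single pair $(P,x)$ with $H^1(C,F_P)=0$.

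\emph{The explicit pair.} Take $P_0$ and $A$ from Lemma~\ref{lem:finite-monodromy}, and use the decomposition \eqref{eq:finite}. For a nontrivial torsion line bundle $\eta_\chi$, Serre duality gives
\[
 H^1\bigl(\eta_\chi((g-1)x)\bigr)^\vee=S(-(g-1)x),\qquad S=H^0(K_C\otimes\eta_\chi^{-1}),\quad \dim S=g-1 .
\]
By Proposition~\ref{prop:generic-vanishing-sequences}, $S(-(g-1)x)=0$ for general $x$. For the trivial summand, $S=H^0(K_C)$ has dimension $g$, so the same proposition gives $h^1(\OO_C((g-1)x))=1$. Thus for general $x$,
\[
 H^1(C,F_{P_0})\simeq V^A\otimes H^1\bigl(\OO_C((g-1)x)\bigr),\qquad \dim H^1(C,F_{P_0})=\dim V^A .
\]
The dual space is $V^{A\vee}\otimes\C\omega_0$, where $\omega_0$ is the unique differential vanishing to order $g-1$ at $x$.

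\emph{Killing the obstruction.} Deform $P_0$ by Lemma~\ref{lem:one-point-gluing-deformation} at a point $p$, using the element $X$ of Lemma~\ref{lem:compression} and a Laurent polynomial $u$ to be chosen. The relevant object is the $\eps$-expansion of the cohomology of $F_{P_\eps}$, computed via the matching complex of Proposition~\ref{prop:principal-parts-matching}.

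\begin{enumerate}
\item The natural first choice is $p=x$. One could also take $p$ a separate general point, with $u$ having a simple pole.
\item Trace a section $v\otimes s$ of the trivial part, $v\in V^A$, through the deformation. At first order the gluing changes by $uA_X$. The component of $uA_Xv$ in the trivial summand is $\Pi A_X v$.
\item I expect this first-order term to vanish, as in the cross-term vanishing in Lemma~\ref{lem:compression}, because $X$ may be taken in the $\rho(a)$-nontrivial eigenspaces plus $\hfr$. Alternatively, one replaces $X$ so that $\Pi A_X|_{V^A}=0$.
\item If the first-order term vanishes, the second-order term $\tfrac{u^2}{2}\Pi A_X^2 v$ governs. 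By the residue pairing of Proposition~\ref{prop:principal-parts-duality}, its pairing with $v'\otimes\omega_0$ is
\[
 \Res\bigl(\omega_0\,u^2\bigr)\cdot\langle \Pi A_X^2 v, v'\rangle .
\]
\item Choose $u$ so that $\Res(\omega_0u^2)\neq0$. Then the second-order obstruction map $V^A\to V^{A}$ is the isomorphism $\Pi A_X^2|_{V^A}$, up to a nonzero scalar.
\item The contributions from the nontrivial-character summands $V_\chi\otimes\eta_\chi$ must be fed through the first-order corrections. This is possible because their $H^1$ vanishes, so the first-order cocycles there are coboundaries and can be absorbed.
\end{enumerate}

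Steps (v) and (vi) show that the second-order obstruction kills every potential element of $H^1$. By upper semicontinuity over the parameter space $\mathbb A^1$, $H^1(F_{P_\eps})=0$ for general $\eps$.

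\emph{Main obstacle.} The hardest step is this second-order perturbation computation. It requires making precise that, once the first-order map is zero on the relevant block, the second-order map on $V^A$ is $\Pi A_X^2$, including the absorption of the correction terms from the $\eta_\chi$-summands. The intended mechanism is a Schur-complement argument in the matching complex, expanded in $\eps$, with $\det\neq0$ coming from Lemma~\ref{lem:compression}.
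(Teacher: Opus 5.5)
Your reductions and your analysis of the explicit pair $(P_0,x)$ are correct. The gap is in the decisive step, items (iii) and (vi): it is left as an expectation, and the mechanism you propose does not work.

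\textbf{Why (iii) and (vi) fail.} For a section $v\otimes1$ with $v\in V^A$, the first-order obstruction is the class of $uA_Xv$. Its only possibly nonzero part is $\Pi A_Xv\otimes[u]$. Lemma~\ref{lem:compression} gives no control of $\Pi A_X|_{V^A}$: the $Y_0$-part is killed by $\Pi$, but the $\hfr$-part is not. For example, take $G=\SL_2$, trivial type and $V=\Sym^4\C^2$. Then $A=\langle a\rangle$ with $a$ of order $4$, $V^A=V_4\oplus V_0\oplus V_{-4}$, and $\Pi A_X|_{V^A}=d\rho(X_0)|_{V^A}\neq0$.

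With $p$ general and $u=z^{-1}$, you get $\langle[u],\omega_0\rangle=\Res_p(\omega_0u)\neq0$. So you would need a new $X$ with $\Pi A_X|_{V^A}=0$ and $\Pi A_X^2|_{V^A}$ invertible, and you do not construct one. Even granting such an $X$, the first-order lift would have a nonzero component $a_1$ on $C\setminus\{p\}$, solving the Cousin problem in the $\eta_\chi$-summands. The second-order obstruction then picks up the extra term $[uA_X\varphi_0(a_1)]$, which depends on global data. The vanishing of $H^1$ of the $\eta_\chi$-summands makes the first order solvable but does not remove this term. Controlling it is exactly the Schur-complement computation you leave open.

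\textbf{The missing idea.} Put the modification where the section spanning $H^0$ vanishes, and bound the pole order of $u$ by that vanishing order. Then $u$ times the section is regular at $p$, while $\Res(\omega_0u^2)$ can still be made nonzero. The paper works with $H^0\bigl(E_\rho(P_0)\otimes K_C(-(g-1)x)\bigr)=sV^A$, takes $p\neq x$ a zero of $s$ of order $a$, and uses $u=z^{-a}+cz^{-1}$.

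In your dual normalization the section is $1\in H^0(\OO_C((g-1)x))$, which vanishes exactly at $x$ to order $g-1$. So take $p=x$ and $u=z^{-(g-1)}+cz^{-1}$, or $u=z^{-1}$ if $g=2$. The argument then runs as follows.
\begin{enumerate}
\item The pair $(0,uA_Xv)$ solves the first-order equation for every $X$.
\item Since $H^0(C,F_{P_0})=V^A\otimes1$ for general $x$, every first-order lift differs from this one by some $h\otimes1$ with $h\in V^A$.
\item The cross term $[uA_Xh]$ vanishes by the same regularity.
\item Hence the second-order obstruction is exactly $\tfrac12\Pi A_X^2v\otimes[u^2]$.
\end{enumerate}
Write $\omega_0=z^{g-1}(c_0+c_1z+\cdots)\,dz$ with $c_0\neq0$. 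Then $\langle[u^2],\omega_0\rangle=c_{g-2}+2cc_0$ for $g\geq3$, and $c_0$ for $g=2$. This is nonzero for a suitable $c$. Lemma~\ref{lem:compression} then applies as stated, with no condition on $\Pi A_X$ and no correction terms. You finish with the Nakayama argument for $H^0$ over $\C[\eps]_{(\eps)}$, as in the paper.
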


\begin{proof}
We first treat the case where $G$ is connected semisimple.
For the fixed representation $(\rho,V)$ with $V^G=0$, we prove
\begin{equation}\label{eq:interp-canonical-vanishing}
 H^0\bigl(C,E_\rho(P)\otimes K_C(-(g-1)x)\bigr)=0
\end{equation}
on a nonempty open subset of the prescribed component
$\MM_G^\tau\times C$.  Choose $P_0$ and $A$ as in Lemma~\ref{lem:finite-monodromy},
and let $\Pi:V\to V^A$ be the projection. We will deform $P_0$
so that \eqref{eq:interp-canonical-vanishing} holds. By \eqref{eq:finite},
\[
 E_\rho(P_0)
 =
 (V^A\otimes\OO_C)
 \oplus
 \bigoplus_{\chi\ne1}V_\chi\otimes\eta_\chi,
\]
where the $\eta_\chi$ are nontrivial degree-zero torsion
line bundles. We have $h^0(C,K_C)=g$. For $\chi\ne1$,
Riemann--Roch and the vanishing
$h^0(C,\eta_\chi^{-1})=0$ give
$h^0(C,K_C\otimes\eta_\chi)=g-1.$ Apply Proposition~\ref{prop:generic-vanishing-sequences}
to the complete linear systems of $K_C$ and of these
finitely many $K_C\otimes\eta_\chi$. We can choose a
single point $x\in C$ such that
\begin{equation}
   h^0\bigl(C,K_C(-(g-1)x)\bigr)=1,\quad
 h^0\bigl(C,K_C(-gx)\bigr)=h^0\bigl(C,K_C\otimes\eta_\chi(-(g-1)x)\bigr)=0.
\end{equation}

Put $L=K_C(-(g-1)x)$. Since $\deg L=g-1$,
Riemann--Roch now gives
\begin{equation}\label{eq:Lchoice}
 h^0(C,L)=h^1(C,L)=1,\quad
 h^0(C,L\otimes\eta_\chi)=
 h^1(C,L\otimes\eta_\chi)=0.
\end{equation}
Choose a generator $s\in H^0(C,L)$. Then
\begin{equation}\label{eq:interp-special-cohomology}
 \begin{aligned}
 H^0(C,E_\rho(P_0)\otimes L)&=sV^A,\\
 H^1(C,E_\rho(P_0)\otimes L)
 &\simeq V^A\otimes H^1(C,L).
 \end{aligned}
\end{equation}

If $V^A=0$, the desired vanishing already holds at
$(P_0,x)$, and semicontinuity gives the required nonempty
open subset, so we assume $V^A\ne0$. The equality
$H^0(C,L(-x))=H^0(C,K_C(-gx))=0$ shows that $s$ does not vanish at $x$ as a section of $L$. Its zero divisor has degree $g-1>0$, so it has a zero $p\ne x$. Let $a=\operatorname{ord}_p(s)\geq1.$ The line bundle $K_C\otimes L^{-1}$ is $\OO_C((g-1)x)$, and its space of sections is one-dimensional by Serre duality. Choose its canonical nonzero section $s'$, whose divisor is $(g-1)x$.
In a local parameter $z$ at $p$, write
\[
 ss'=z^a b(z)\,dz,\qquad
 b(z)=\sum_{j\geq0}b_jz^j,\qquad b_0\ne0.
\]
We choose a  Laurent polynomial $u=u(z)$ such that
\begin{equation}\label{eq:u}
 su\text{ is regular at }p,\qquad
 \operatorname{Res}_p(ss'u^2)\ne0.
\end{equation}
For $a=1$, take $u=z^{-1}$; the residue is $b_0$.
For $a\geq2$, take $u=z^{-a}+cz^{-1}$. Then $su$ is
regular at $p$ and $\operatorname{Res}_p(ss'u^2)=b_{a-1}+2cb_0,$ which is nonzero for a suitable $c\in\C$. The section $su^2$ is defined only on the punctured
formal disc at $p$ and has a pole of order at most $a$.
Using the notation of
Subsection~\ref{subsec:vanishing-principal-parts}, define
\[
 [su^2]
 :=
 \partial^L_{p,a}
 \bigl(\operatorname{pp}_p(su^2)\bigr)
 \in H^1(C,L).
\]
By
Proposition~\ref{prop:principal-parts-duality}\textup{(ii)},
\begin{equation}\label{eq:interp-nonzero-principal-class}
 \langle[su^2],s'\rangle
 =
 \operatorname{Res}_p(ss'u^2)\ne0.
\end{equation}
Hence $[su^2]\ne0$, whereas $[su]=0$ because $su$ is
regular at $p$.

Set $C^\circ=C\setminus\{p\},\
 D_p=\operatorname{Spec}\C[[z]],\
 D_p^\times=\operatorname{Spec}\C((z))$.
Trivialize on $D_p$ the finite \etale{} $A$-torsor inducing $P_0$
and use the induced framing of $P_0$; also fix a frame of $L$ on
$D_p$. In these frames, the projection in \eqref{eq:finite} is
represented by the constant operator $\Pi$, and the sections
in $sV^A$ have local expressions $sv$.
Choose $X\in\gfr$ as in Lemma~\ref{lem:compression}
and put $A_X=d\rho(X)$. Thus
\[
 \Pi A_X^2|_{V^A}:V^A\longrightarrow V^A
\]
is an isomorphism. Apply Lemma~\ref{lem:one-point-gluing-deformation}
with the chosen $u$ and $X$ at $p$. We obtain a family $P_\eps$
with special fiber $P_0$, staying in $\MM_G^\tau$, whose modifying
matrix satisfies
\[
 \rho(\gamma(\eps,z))
 =
 I_V+\eps uA_X+\frac{\eps^2u^2}{2}A_X^2
 \pmod{\eps^3}.
\]

\smallskip\noindent\textbf{Claim.}
No nonzero section
$sv\in H^0(C,E_\rho(P_0)\otimes L)=sV^A$ lifts to a section of
$E_\rho(P_\eps)\otimes L$ modulo $\eps^3$.

Assume the \textbf{claim} holds. Restrict the family to $R=\C[\eps]_{(\eps)}$ and put $\mathcal H=H^0(C_R,E_\rho(P_R)\otimes L)$. Then $\mathcal H$ is a finite $R$-module.
Multiplication by $\eps$ is injective on the locally
free family $E_\rho(P_R)\otimes L$.
The resulting long exact sequence of cohomology gives an injection
\[
 \mathcal H/\eps\mathcal H
 \lhook\joinrel\longrightarrow
 H^0(C,E_\rho(P_0)\otimes L).
\]
If $\mathcal H\ne0$, Nakayama's lemma gives a nonzero
class on the left. Its image is a nonzero special-fiber
section extending over $R$, hence modulo $\eps^3$,
contradicting the claim. Thus $\mathcal H=0$. Flat base change to the fraction field of $R$,
followed by semicontinuity, gives
\[
 H^0(C,E_\rho(P_\eps)\otimes L)=0
\]
for general $\eps$. This proves
\eqref{eq:interp-canonical-vanishing} on a nonempty
open subset of $\MM_G^\tau\times C$.

We now apply the same assertion to
$(\rho^\vee,V^\vee)$. Complete reducibility gives
$(V^\vee)^G=0$, so Serre duality implies, on a
nonempty open subset,
\[
 H^1\bigl(C,E_\rho(P)((g-1)x)\bigr)
 \simeq
 H^0\bigl(C,E_{\rho^\vee}(P)
 \otimes K_C(-(g-1)x)\bigr)^\vee=0.
\]
 Riemann--Roch gives
\[
 \chi\bigl(E_\rho(P)((g-1)x)\bigr)
 =
 \dim V\,(g-1)+\dim V\,(1-g)=0.
\]
Consequently,
\[
 H^0\bigl(C,E_\rho(P)((g-1)x)\bigr)=0,
\]
proving the assertion in the semisimple case.

Now  suppose that $G$ is connected reductive,
with fixed central data, and that $\rho|_{Z(G)}=1$.
Then $\rho$ factors through the connected semisimple
group $G_{\mathrm{ad}}$, and $E_\rho(P)$ depends only
on the associated adjoint bundle.
Apply the semisimple assertion to $G_{\mathrm{ad}}$
and pull back the resulting vanishing open subset along
\[
 \MM_{G,Q}^{\tau}\times C
 \longrightarrow
 \MM_{G_{\mathrm{ad}}}^{\tau_{\mathrm{ad}}}\times C.
\]
Here $\tau_{\mathrm{ad}}$ is the type induced by $\tau$.
Lemma~\ref{lem:adjoint-reduction} makes this inverse
image nonempty and dense,  proving the
fixed-central-data assertion.
For $\MM_G^\tau\times C$, the vanishing locus is open by
semicontinuity. Any nonempty open substack contains a point
$(P,x)$ and has nonempty pullback to
$\MM_{G,Q}^\tau\times C$ for $Q=P\times^G G^{\mathrm{ab}}$.
The fixed-central-data assertion makes this pullback meet the
vanishing locus. Hence the vanishing locus is dense also without
fixing $Q$.

We return to the semisimple family constructed above to prove the
claim. Let $\varphi_0$ be the original gluing data of
$E_\rho(P_0)\otimes L$ from the $C^\circ$ side to
the disc side. A section of the deformed bundle is a pair satisfying
\begin{equation}\label{eq:interp-section-matching}
 \sigma_{D_p}(\eps)
 =
 \rho(\gamma(\eps,z))\,
 \varphi_0\bigl(\sigma_{C^\circ}(\eps)\bigr)
 \quad\text{on }D_p^\times.
\end{equation}

Suppose $0\ne v\in V^A$ and $sv$ lifts to a section
$\sigma(\eps)$ of $E_\rho(P_\eps)\otimes L$ modulo $\eps^3$.
Over $C^\circ$ and $D_p$, write this lift as

\begin{equation}\label{eq:interp-local-lifts}
 \begin{aligned}
 \sigma_{C^\circ}(\eps)
 &=(sv)|_{C^\circ}+\eps a_1+\eps^2a_2,\\
 \sigma_{D_p}(\eps)
 &=(sv)|_{D_p}+\eps b_1+\eps^2b_2,
 \end{aligned}
\end{equation}
where $a_i$ and $b_i$ are sections of $E_\rho(P_0)\otimes L$
on $C^\circ$ and $D_p$, respectively. Comparing
coefficients in \eqref{eq:interp-section-matching} gives
\begin{align}
 b_1-\varphi_0(a_1)
 &=suA_Xv,
 \label{eq:interp-first-order}\\
 b_2-\varphi_0(a_2)
 &=uA_X\varphi_0(a_1)+\frac12su^2A_X^2v.
 \label{eq:interp-second-order}
\end{align}

Since $su$ is regular at $p$, $(0,suA_Xv)$ is a first-order
solution. By Proposition~\ref{prop:principal-parts-matching},
the difference of any other solution and this one is a global
section of $E_\rho(P_0)\otimes L$. Equation
\eqref{eq:interp-special-cohomology} therefore gives $h\in V^A$ with
\begin{equation}\label{eq:interp-all-first-order-lifts}
 a_1=(sh)|_{C^\circ},\qquad
 b_1=suA_Xv+(sh)|_{D_p}.
\end{equation}

Substituting \eqref{eq:interp-all-first-order-lifts}
into \eqref{eq:interp-second-order} gives
\begin{equation}\label{eq:second-order}
b_2-\varphi_0(a_2)=suA_Xh+\frac12su^2A_X^2v.
\end{equation}

The first summand is regular at $p$, so its cohomology
class $[suA_Xh]\in H^1(C,E_\rho(P_0)\otimes L)$ vanishes by
Proposition~\ref{prop:principal-parts-duality}\textup{(i)}.
The projection in \eqref{eq:finite}, tensored with $L$, induces
the $H^1$-isomorphism in \eqref{eq:interp-special-cohomology},
by \eqref{eq:Lchoice}. In the chosen framing it is represented
by $\Pi$. Naturality of principal-parts classes
(Proposition~\ref{prop:principal-parts-duality}\textup{(iii)})
therefore gives, under that isomorphism,
\[
 \left[\frac12su^2A_X^2v\right]
 =\frac12\Pi A_X^2v\otimes[su^2]
 \in V^A\otimes H^1(C,L).
\]
This class is nonzero because $v\ne0$,
$\Pi A_X^2|_{V^A}$ is invertible, and $[su^2]\ne0$.
By Proposition~\ref{prop:principal-parts-matching},
\eqref{eq:second-order} has no solution. This contradiction
proves the claim and completes the proof.
\end{proof}

Return to the connected reductive group $G$ and representation
$(\rho,V)$ of Theorem~\ref{thm:vanishing}, with $r=\dim V$.
By that theorem and Serre duality, a general pair $(P,x)$ satisfies
\begin{equation}\label{eq:dualinterp}
 H^0\bigl(C,E_\rho(P)^\vee\otimes
 K_C(-(g-1)x)\bigr)=0.
\end{equation}

Fix a principal bundle $P$ for which \eqref{eq:dualinterp} holds
for general $x\in C$, and fix a subspace
$Z\subset H^0(C,E_\rho(P)^\vee\otimes K_C)$ of dimension $q$
before choosing $x$. Put
\begin{equation}\label{eq:ap}
 a_p(x)=\dim Z(-(p-1)x)-\dim Z(-px),
 \qquad p\geq1,
\end{equation}
where \[
 Z(-px):=Z\cap H^0(C,E_\rho(P)^\vee\otimes K_C(-px)).
\]

\begin{lemma}
\label{lem:jets}
For the fixed $P$ and $Z$ above and a general point $x\in C$, we have
\begin{equation}\label{eq:monotone}
 r\geq a_1(x)\geq a_2(x)\geq\cdots\geq a_{g-1}(x)\geq0,
 \qquad
 \sum_{p=1}^{g-1}a_p(x)=q.
\end{equation}
\end{lemma}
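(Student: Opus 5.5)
The plan is to read off each of the three assertions in \eqref{eq:monotone} from the filtration $Z\supset Z(-x)\supset Z(-2x)\supset\cdots$ by vanishing order at $x$. Write $F=E_\rho(P)^\vee\otimes K_C$, a vector bundle of rank $r$. The sum formula and the bound $a_1(x)\le r$ are formal; the monotonicity needs a deformation argument in the point $x$.

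\textbf{Sum formula and upper bound.} The sum telescopes:
\[
 \sum_{p=1}^{g-1}a_p(x)=\dim Z-\dim Z(-(g-1)x).
\]
The last term vanishes for general $x$, because
\[
 Z(-(g-1)x)\subset H^0\bigl(C,F(-(g-1)x)\bigr)=0
\]
by \eqref{eq:dualinterp}. For the bound, $a_p(x)$ is the dimension of the image of $Z(-(p-1)x)$ under the leading-term map
\[
 Z(-(p-1)x)\longrightarrow F(-(p-1)x)|_x ,
\]
whose kernel is $Z(-px)$. Write $W_p(x)$ for this image. It lies in an $r$-dimensional fiber, so $a_1(x)\le r$. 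Once monotonicity is proved, every $a_p(x)\le r$ as well.

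\textbf{Setting up the deformation in $x$.} Each $a_p$ is upper or lower semicontinuous in $x$, being a difference of dimensions of kernels of linear maps that depend algebraically on $x$. So there is a nonempty open set $U_0\subset C$ on which all $\dim Z(-px)$, for $0\le p\le g$, are constant. Over $U_0$, the spaces $Z(-px')$ are kernels of constant-rank maps of trivial bundles, so they form algebraic subbundles of the trivial bundle $Z\times U_0$. Fix $x\in U_0$, a local coordinate $z$ near $x$, and a local frame of $F$. This identifies every fiber $F(-kx')|_{x'}$ with $\C^r$ for $x'$ near $x$, and each $W_p(x')$ becomes a subspace of $\C^r$.

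\textbf{Monotonicity (main step).} I will prove $W_{p+1}(x)\subset W_p(x)$ in this identification, which gives $a_{p+1}(x)\le a_p(x)$. Let $w\in W_{p+1}(x)$ come from some $s\in Z(-px)$. Since $Z(-px')$ is a subbundle, extend $s$ to a holomorphic family $s(x')\in Z(-px')$ with $s(x)=s$. Locally write
\[
 s(x')(z)=(z-x')^p\,c(x',z),\qquad c(x,x)=w .
\]
Differentiating in $x'$ gives
\[
 \frac{\partial s}{\partial x'}=-p(z-x')^{p-1}c+(z-x')^p\,\partial_{x'}c .
\]
Because $Z$ is a fixed subspace, $\partial s/\partial x'$ lies in $Z$. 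It vanishes to order at least $p-1$ at $x'$, with leading coefficient $-p\,c(x',x')$. At $x'=x$ this shows $-pw\in W_p(x)$, hence $w\in W_p(x)$.

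The only delicate point is making sure the holomorphic extension $s(x')$ exists. This is why $x$ must be taken in the open set $U_0$ where all the dimensions are constant. Intersecting $U_0$ with the open set where \eqref{eq:dualinterp} holds gives the required general $x$.
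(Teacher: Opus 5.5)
Your proof is correct, and it is essentially the paper's argument in transposed form. The paper works on the same open set where the ranks are constant, with the annihilators $\mathcal S_p=\Ann(Z(-px))\subset Z^\vee\otimes\OO_U$, which are spanned by the derivatives $\partial^jf_i$, $j<p$, of the coordinate functionals; it gets $a_{p+1}\le a_p$ from the surjective $\OO_U$-linear map $\mathcal S_p/\mathcal S_{p-1}\to\mathcal S_{p+1}/\mathcal S_p$ induced by $\partial$, and differentiating $\langle v,s(x')\rangle\equiv0$ shows this map is minus the transpose of your map $s\mapsto\partial s/\partial x'$ on graded pieces.

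The sum formula, the bound $a_1\le r$ and the use of \eqref{eq:dualinterp} agree with the paper. The only slip is your remark that each $a_p$ is semicontinuous: a difference of upper semicontinuous functions need not be. This is harmless, because you only use the upper semicontinuity of each $\dim Z(-px)$, which is correct.
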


\begin{proof}
Put
\[
 F:=E_\rho(P)^\vee\otimes K_C,
 \qquad \rk F=r,
 \qquad \dim_{\C}Z=q.
\]
Recall that
\[
 Z(-mx)=Z\cap H^0(C,F(-mx))\quad(m\geq0),
 \qquad
 a_p(x)=\dim Z(-(p-1)x)-\dim Z(-px)\quad(p\geq1).
\]
If $q=0$, all the assertions are immediate, so assume $q>0$.
By the hypothesis on $P$ and semicontinuity, the set
\[
 U_0:=\bigl\{x\in C:H^0(C,F(-(g-1)x))=0\bigr\}
\]
is a nonempty Zariski open subset of $C$. We shall prove the
assertions on a possibly smaller nonempty open subset of $U_0$.

Choose a nonempty affine open subset $U\subset U_0$ on which $F$
has a frame $e_1,\ldots,e_r$. After shrinking $U$, choose a regular
function $z$ on $U$ such that $dz$ is nowhere zero.
Let $\partial=d/dz$ be the derivation characterized by
$dh=(\partial h)\,dz$ for local regular functions $h$.

The sections in $Z$ define the evaluation morphism
\[
 \operatorname{ev}_Z:
 Z\otimes_{\C}\OO_U\longrightarrow F|_U,
 \qquad s\otimes h\longmapsto h\,s|_U.
\]
Dualizing gives
\[
 \operatorname{ev}_Z^\vee:
 F^\vee|_U\longrightarrow
 \mathcal T:=Z^\vee\otimes_{\C}\OO_U.
\]
Derivatives of sections of $\mathcal T$ are taken coefficientwise
in its constant trivialization. If $e_1^\vee,\ldots,e_r^\vee$ is
the dual frame, define
\[
 f_i:=\operatorname{ev}_Z^\vee(e_i^\vee)
 \in H^0(U,\mathcal T),\qquad 1\leq i\leq r.
\]

Then for a  section $s\in Z$, we have
$s|_U=\sum_{i=1}^r f_i(s) e_i.$

Fix $x\in U$ and $p\geq1$. Let $\mathfrak m_x$ be the maximal ideal
of $\OO_{C,x}$, and let
\[
 px:=\operatorname{Spec}(\OO_{C,x}/\mathfrak m_x^p)
\]
be the length-$p$ infinitesimal neighbourhood of $x$.
The space of $(p-1)$-jets of $F$ at $x$ is
\[
 J_x^{p-1}(F):=H^0(px,F|_{px}),
\]
and jet evaluation is the restriction map
\[
 j_x^{p-1}:Z\longrightarrow J_x^{p-1}(F),
 \qquad s\longmapsto s|_{px}.
\]
Because $dz(x)\neq0$, the function $t=z-z(x)$ is a uniformizer at
$x$. In the chosen frame, Taylor expansion gives
\[
 j_x^{p-1}(s)
 =\sum_{i=1}^r\sum_{j=0}^{p-1}
   \frac{(\partial^j f_i)(x)(s)}{j!}\,t^j e_i
 \pmod{t^p}.
\]
Thus the $(p-1)$-jet records the Taylor coefficients of orders
$0,\ldots,p-1$. It is zero exactly when every coefficient $f_i(s)$
vanishes to order at least $p$ at $x$. Consequently,
\[
 \ker j_x^{p-1}=Z(-px),
\]
and
\[
 \im\bigl((j_x^{p-1})^\vee\bigr)
 =\operatorname{span}_{\C}
 \bigl\{(\partial^j f_i)(x):1\leq i\leq r,\ 0\leq j<p\bigr\}.
\]

Set $\mathcal S_0=0$ and, for $1\leq p\leq g-1$, define
\[
 \mathcal S_p
 :=\sum_{i=1}^r\sum_{j=0}^{p-1}
   \OO_U\,\partial^j f_i
 \subset\mathcal T.
\]
After shrinking $U$, we may assume that $\mathcal S_p$ and $\mathcal T/\mathcal S_p$
are locally free, and the fiber of $\mathcal S_p$ is the image
of the corresponding fiber map. Thus, for every $x\in U$,
\[
 \mathcal S_p|_x
 =\operatorname{span}_{\C}
 \bigl\{(\partial^j f_i)(x):1\leq i\leq r,\ 0\leq j<p\bigr\}
 \subset Z^\vee.
\]
Therefore
\[
 \mathcal S_p|_x
 =\im\bigl((j_x^{p-1})^\vee\bigr)
 =\Ann_{Z^\vee}\bigl(Z(-px)\bigr).
\]
Taking dimensions therefore gives
\[
 \rk\mathcal S_p
 =\dim(\mathcal S_p|_x)
 =q-\dim Z(-px).
\]
We conclude that
\[
 \begin{aligned}
 \rk(\mathcal S_p/\mathcal S_{p-1})
 &=\rk\mathcal S_p-\rk\mathcal S_{p-1}\\
 &=\bigl(q-\dim Z(-px)\bigr)
   -\bigl(q-\dim Z(-(p-1)x)\bigr)\\
 &=\dim Z(-(p-1)x)-\dim Z(-px)\\
 &=a_p(x).
 \end{aligned}
\]
In particular, the numbers $a_p(x)$ are constant on this open subset
$U$ and are nonnegative.

For $1\leq p<g-1$, differentiation induces a surjective
$\OO_U$-linear map
\[
 \psi:\mathcal S_p/\mathcal S_{p-1}
 \longrightarrow\mathcal S_{p+1}/\mathcal S_p,
 \qquad [v]\longmapsto[\partial v].
\]
It is well defined because
$\partial\mathcal S_{p-1}\subset\mathcal S_p$.
It is $\OO_U$-linear because the extra term in
$\partial(fv)=f\partial v+(\partial f)v$ lies in
$\mathcal S_p$. $\psi$ is surjective since classes of $\partial^p f_i$ generate the
target and are images of the classes of $\partial^{p-1}f_i$.
Thus $a_{p+1}(x)\leq a_p(x)$ on $U$.

Since $\mathcal S_1$ is generated by $r$ sections,
$a_1(x)=\rk\mathcal S_1\leq r$. Moreover, $U\subset U_0$, so
$Z(-(g-1)x)=0$. Summing \eqref{eq:ap} gives
$\sum_{p=1}^{g-1}a_p(x)=q$.
\end{proof}

\section{Hecke curves and splitting types}\label{sec:hecke}

As before, assume $g\geq3$. Let $(\rho,V)$ satisfy
$V^G=0$, $\rho|_{Z(G)}=1$ and $m_\rho\leq g-1$;
we still do not require it to be irreducible in this section.
We work on $\MM=\MM_{G,Q}^{\tau}$ and
$M=M_{G,Q}^{\tau}(C)$, and fix a simple factor $G_i$
of $G_{\mathrm{ad}}$. Notice that $m_i\leq m_\rho\leq g-1$.

\subsection{Complete highest root Hecke curves}

For $(P,\alpha)\in\MM_{G,Q}^{\tau}$ and $x\in C$, choose a uniformizer $z$ at $x$ and write $D_x=\operatorname{Spec}\C[[z]]$ and $D_x^\times=\operatorname{Spec}\C((z))$. Choose trivializations on $D_x$ for which the specified isomorphism $\alpha:P\times^G G^{\mathrm{ab}}\simeq Q$ becomes the identity:
\begin{equation}\label{framing}
\begin{tikzcd}
 P|_{D_x} \arrow[r,"\sim"] \arrow[d]
 & D_x\times G \arrow[d]\\
 Q|_{D_x} \arrow[r,"\sim"]
 & D_x\times G^{\mathrm{ab}}.
\end{tikzcd}
\end{equation}
After fixing the bottom trivialization, the choices of the top one form
a torsor under $G_{\mathrm{der}}(\C[[z]])$.

Put
\[
 LG_{\mathrm{der}}=G_{\mathrm{der}}(\C((z))),\qquad
 L^+G_{\mathrm{der}}=G_{\mathrm{der}}(\C[[z]]),\qquad
 \operatorname{Gr}_{G_{\mathrm{der}},x}
 =LG_{\mathrm{der}}/L^+G_{\mathrm{der}}.
\]
We use the same gluing direction as in
Section~\ref{sec:interpolation}. Let $\varphi$ denote the original
identification on $D_x^\times$, from the restriction of
$P|_{C\setminus\{x\}}$ to the trivialized bundle on $D_x$.
For $\gamma\in LG_{\mathrm{der}}$, replace $\varphi$ by
$\gamma^{-1}\varphi$. Beauville--Laszlo gluing~\cite{BL95}
gives a new principal $G$-bundle. Since $\gamma$ takes values
in $G_{\mathrm{der}}$, its $G^{\mathrm{ab}}$-quotient, including
its identification with $Q$, is unchanged. Replacing $\gamma$
by $\gamma h$, with $h\in L^+G_{\mathrm{der}}$, replaces the
new gluing by $h^{-1}\gamma^{-1}\varphi$ and hence only changes
the disc trivialization. We thus obtain
\begin{equation}\label{eq:relative-uniformization}
 \operatorname{unif}_{P,x}:
 \operatorname{Gr}_{G_{\mathrm{der}},x}^{0}
 \longrightarrow\MM_{G,Q}^{\tau}.
\end{equation}
This is the relative form of the gluing construction in
\cite[Definition~1.4 and Proposition~2.8]{KNR94}, with
$P|_{C\setminus\{x\}}$ kept fixed.
The superscript $0$ denotes the component of the identity coset; modifications in this component preserve $\tau$, and the identity coset maps to $P$ under this map.

Let $\theta_i$ be the highest long root of $G_i$ with respect to the chosen maximal torus and Borel subgroup. Choose nonzero root vectors
$e_{\theta_i}\in\Lie(G_i)_{\theta_i}$ and
$f_{\theta_i}\in\Lie(G_i)_{-\theta_i}$, normalized so that, for
$h_{\theta_i}=[e_{\theta_i},f_{\theta_i}]$,
\[
 [h_{\theta_i},e_{\theta_i}]=2e_{\theta_i},\qquad
 [h_{\theta_i},f_{\theta_i}]=-2f_{\theta_i}.
\]
Here we view these vectors in $\Lie(G_{\mathrm{der}})$ through its
identification with $\bigoplus_i\Lie(G_i)$. The corresponding
$\mathfrak{sl}_2$-homomorphism integrates, through the simply connected
cover of $G_{\mathrm{der}}$, to a homomorphism
\begin{equation}\label{eq:root-sl2-homomorphism}
 \iota_i:\SL_2\longrightarrow G_{\mathrm{der}}
\end{equation}
whose differential sends the standard upper, diagonal and lower $\mathfrak{sl}_2$ generators to $e_{\theta_i},h_{\theta_i},f_{\theta_i}$. The homomorphism $\iota_i$ also induces a homomorphism $L\SL_2\rightarrow LG_{\mathrm{der}}$, which we still denote by $\iota_i$.

Define the homomorphism
\begin{equation}\label{eq:root-loop-group}
 \begin{aligned}
 \kappa_z:\SL_2&\longrightarrow LG_{\mathrm{der}},\\
 \begin{pmatrix}a&b\\c&d\end{pmatrix}
 &\longmapsto
 \iota_i\!\begin{pmatrix}a&bz^{-1}\\cz&d\end{pmatrix}.
 \end{aligned}
\end{equation}
The image of the differential $d\kappa_z:\mathfrak{sl}_2\to\Lie(G_{\mathrm{der}})((z))$ is spanned by
\begin{equation}\label{eq:loops}
 z^{-1}e_{\theta_i},\qquad h_{\theta_i},\qquad zf_{\theta_i}.
\end{equation}
The inverse image $\kappa_z^{-1}(L^+G_{\mathrm{der}})$ is the lower triangular Borel $B^-\subset\SL_2$. In this paper, we fix an isomorphism $\SL_2/B^-\simeq\Pp^1$ so that the identity coset is $0\in \mathbb P^1$. The induced map to the affine Grassmannian, followed by \eqref{eq:relative-uniformization}, gives
\begin{equation}\label{eq:hecke-uniformization}
 \mathfrak f:\Pp^1=\SL_2/B^-
 \longrightarrow\operatorname{Gr}_{G_{\mathrm{der}},x}^{0}
 \xrightarrow{\operatorname{unif}_{P,x}}\MM_{G,Q}^{\tau}.
\end{equation}

Clearly $\mathfrak f(0)=P$. On the affine chart through $0$, the coset represented by $\left(\begin{smallmatrix}1&t\\0&1\end{smallmatrix}\right)$ gives
\begin{equation}\label{eq:hecke-affine-chart}
 \kappa_z\!\begin{pmatrix}1&t\\0&1\end{pmatrix}
 =\iota_i\!\begin{pmatrix}1&t/z\\0&1\end{pmatrix}
 =\exp(te_{\theta_i}/z).
\end{equation}
This is the root-curve construction used in \cite[proof of Proposition~2.1]{BGH21}.

\begin{definition}\label{def:highest-root-hecke}
We call \eqref{eq:hecke-uniformization} an \emph{$i$th highest-root Hecke family}. If its entire image lies in $\mathfrak U$ and its composite $f:\Pp^1\to U$ is nonconstant, we call $f$ an \emph{$i$th highest-root Hecke curve} and $x$ is the modification point of $f$.
\end{definition}

\begin{proposition}\label{prop:complete-hecke}
Fix $i$ and a closed subset $Z\subset U$ of codimension at least two. There exist an irreducible smooth variety $S$ and morphisms
\[
 \sigma:S\longrightarrow C,\qquad  F:S\times\Pp^1\longrightarrow U\setminus Z
\]
such that for each $s\in S$, $F_s$ is an $i$th highest-root Hecke curve with modification point $\sigma(s)$, and
\[
 S\longrightarrow U\times C,\qquad  s\longmapsto\bigl(F_s(0),\sigma(s)\bigr)
\]
is dominant. Moreover, for a general pair $(P,x)$, a single formal
framing satisfying \eqref{framing} can be chosen so that the
highest-root families associated with all simple factors give
nonconstant curves
\[
 f_i:\Pp^1\longrightarrow U\setminus Z,\qquad f_i(0)=[P],
\]
with modification point $x$.
\end{proposition}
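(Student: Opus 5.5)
Let me first set up the parameter space. Take $S_0=\{(P,x,\text{framing})\}$, the space of pairs $(P,x)$ with $P\in\mathfrak U$ together with a formal framing at $x$ as in \eqref{framing}. After fixing the bottom trivialization, the framings form a torsor under $L^+G_{\mathrm{der}}$. I will work with a finite-dimensional quotient instead: a jet group $G_{\mathrm{der}}(\C[z]/z^N)$ for $N$ large, say $N=2$. This suffices because the family \eqref{eq:hecke-uniformization} only involves $z^{\pm1}$, and the resulting bundle depends only on the framing modulo $z^2$; the ambiguity in $\kappa_z$ is then absorbed by $L^+$. Together with a local coordinate at $x$, this data can be realized étale locally over $\mathfrak U\times C$ by a smooth irreducible variety $S$ that dominates $U\times C$. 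The Beauville--Laszlo construction in families gives a morphism $S\times\Pp^1\to\MM_{G,Q}^\tau$ with $0\mapsto P$.

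\textbf{Step 1 (landing in $U\setminus Z$).} I need to show that for general $s$ the whole curve avoids $(\MM\setminus\mathfrak U)\cup(\text{preimage of }Z)$, which has codimension $\ge2$ in $\MM$ by Proposition~\ref{prop:inputs}. I will use the incidence variety $I=\{(s,t): F(s,t)\in B\}$ with $B$ the bad set. The evaluation map $S\times(\Pp^1\setminus\{0\})\to\MM$, $(s,t)\mapsto F_s(t)$, is smooth (or at least dominant with equidimensional fibers). The reason is that the family is $\SL_2$/loop-group homogeneous: a modification at $t$ of $P$ is the unmodified $P'$ modified back. Equivalently, the map $(P,x,\text{framing},t)\mapsto F(t)$ is a composite of $\operatorname{unif}$, which is smooth in families over $\MM\times C$ when the framing varies, and of translation by $\exp(te_{\theta_i}/z)$. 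Granting this, $\codim I\ge2$ in $S\times\Pp^1$, so its image in $S$ has codimension $\ge1$. Hence a general $s$ gives a curve inside $U\setminus Z$. The point $t=0$ is fine since $P$ itself is general.

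\textbf{Step 2 (nonconstancy).} The tangent vector at $0$ of $F_s$ is the class of $z^{-1}e_{\theta_i}$ in $H^1(C,E_{\Ad}(P))$. This is the principal-parts class $[z^{-1}e_{\theta_i}]$ of Proposition~\ref{prop:principal-parts-matching}, with $\Lie(G_{\mathrm{der}})$ coefficients. By Proposition~\ref{prop:principal-parts-duality}(i) it vanishes iff some section of $E_{\Ad}(P)(x)$ has that principal part. For general $(P,x)$, Theorem~\ref{thm:vanishing} applied to the adjoint representation (restricted to the factor $\Lie(G_i)$, which satisfies $V^G=0$ and $\rho|_Z=1$) gives $H^0(E_{\Ad}(P)((g-1)x))=0$. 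Since $g-1\ge1$, this implies $H^0(E_{\Ad}(P)(x))=0$, so the class is nonzero and $f$ is nonconstant. Because the gerbe $\mathfrak U\to U$ is étale on tangent spaces, the composite to $U$ is nonconstant as well.

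\textbf{Step 3 (one framing for all factors).} For the final assertion I fix a general $(P,x)$ and a single framing. The bad locus for each $i$ from Step 1 is a proper closed subset of $S$, and these are finitely many. The fibers of $S\to U\times C$ are irreducible (jet-group torsors), so the bad loci restrict to proper closed subsets of a general fiber. Therefore a framing avoiding all of them exists, and Step 2 gives nonconstancy for every $i$ simultaneously. I expect the main obstacle to be the smoothness/dimension count in Step 1, namely justifying that varying $(P,x,\text{framing})$ moves $F_s(t)$ through all of $\MM$ with fibers of the expected dimension.
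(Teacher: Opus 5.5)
Your outline follows the same architecture as the paper's proof. You use a parameter space of $(P,x)$ with a finite-order framing, evaluation maps $\operatorname{ev}_i$ to $\MM$, and a codimension count against the codimension-two bad locus, with properness of $\Pp^1$ giving a proper closed subset of $S$ to discard. Nonconstancy comes from the injective connecting map supplied by Theorem~\ref{thm:vanishing}, and one framing serves all factors. Steps 2 and 3 are fine. In Step 3 you do not even need irreducible fibres: once the bad loci for all $i$ are removed from $S$, any point of $S$ over a general $(P,x)$ carries a common framing, because $S\to U\times C$ has dense open image.

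The genuine gap is the step you grant: smoothness, or flatness, of $(P,x,\psi,t)\mapsto F(t)$. Everything else is a formal dimension count once this is known, so it is the real content of the proposition. Your heuristic that a modification is undone by modifying back only shows that, with \emph{full} formal framings, modification by a fixed loop $\gamma$ is invertible. That operation is equivariant only for $L^+G_{\mathrm{der}}\cap\gamma L^+G_{\mathrm{der}}\gamma^{-1}$. So it is not a map over $\MM$, and it shifts finite framing levels. Turning it into a proof would require controlling that shift and then descending smoothness. As written, ``$\operatorname{unif}$ is smooth in families when the framing varies'' restates the claim.

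The paper closes this by a first-order computation at fixed $x$ and fixed $t=\eta B^-$. Deformations of the framed object $(P,\alpha,\psi)$, with $\psi$ a framing modulo $z^a$, are $H^1(C,E_{\Ad}(P)(-ax))$, since infinitesimal automorphisms must vanish on $ax$. Deformations of the modified bundle $P'$ are $H^1(C,E_{\Ad}(P'))$. The congruence condition \eqref{eq:congruence-stabilizer} gives $\Ad(\kappa_z(\eta)^{-1})\bigl(z^a\Lie(G_{\mathrm{der}})[[z]]\bigr)\subset\Lie(G_{\mathrm{der}})[[z]]$. Hence there is an inclusion $E_{\Ad}(P)(-ax)\hookrightarrow E_{\Ad}(P')$, which on \v{C}ech cocycles is the differential of $\operatorname{ev}_i$. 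Its cokernel is supported at $x$, so the map on $H^1$ is surjective, and since source and target are smooth, $\operatorname{ev}_i$ is smooth.

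This is also where your framing level needs justification. ``Only $z^{\pm1}$ occurs'' does not show that the modification depends only on $\psi$ modulo $z^N$. One needs $\kappa_z(\eta)^{-1}K_N\kappa_z(\eta)\subset L^+G_{\mathrm{der}}$ for every $\eta$, where $K_N$ is the level-$N$ congruence subgroup, and conjugation combines the poles of $\kappa_z(\eta)$ and its inverse. For $G_{\mathrm{der}}=\SL_2$, take $\eta=\left(\begin{smallmatrix}0&1\\-1&0\end{smallmatrix}\right)$ and $k=\left(\begin{smallmatrix}1&0\\z&1\end{smallmatrix}\right)\in K_1$. The conjugate is $\left(\begin{smallmatrix}1&-z^{-1}\\0&1\end{smallmatrix}\right)$, so level one fails. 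Level two does in fact suffice here, but only after a conjugation estimate for $\Ad(\kappa_z(\eta)^{-1})$; the paper avoids this by simply taking $a>2b$.
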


\begin{proof}
Choose a nonempty affine open subset $C^\circ\subset C$ on which $Q$ is trivial, fix this trivialization, and choose a regular function
\[
 w:C^\circ\longrightarrow\mathbb A^1
\]
with nowhere vanishing differential. For $x\in C^\circ$, we use $z_x=w-w(x)$ as a uniformizer at $x$.

Choose a closed embedding $G_{\mathrm{der}}\hookrightarrow\GL_N$. For every simple factor, the entries of $\kappa_z(\eta)$ and $\kappa_z(\eta)^{-1}$ are Laurent polynomials in $z$ with regular coefficients on $\SL_2$. Choose a common bound $b$ for their pole
orders, valid for all factors, and take $a>2b$. Then
\[
 K_a=\ker\bigl(G_{\mathrm{der}}(\C[[z_x]])
       \longrightarrow G_{\mathrm{der}}(\C[z_x]/(z_x^a))\bigr)
\]
satisfies, for every simple factor,
\begin{equation}\label{eq:congruence-stabilizer}
 \kappa_{z_x}(\eta)^{-1}K_a\kappa_{z_x}(\eta)  \subset L^+G_{\mathrm{der}}  \qquad(\eta\in\SL_2).
\end{equation}
Indeed, the conjugate of a matrix congruent to the identity modulo $z_x^a$ is congruent to the identity modulo $z_x^{a-2b}$, and so is regular and invertible. Since the embedding is closed, the conjugate belongs to $G_{\mathrm{der}}(\C[[z_x]])$.

For a $\C$-algebra $R$ and a morphism $x_R:\operatorname{Spec}R\to C^\circ$, let $\Gamma_{x_R}\subset\operatorname{Spec}R\times C$ be its graph. On the formal neighbourhood of this graph, use
\[
 z_{x_R}=w(y)-w(x_R)
\]
as a relative parameter, where $y$ denotes the point of the second factor $C$. Since $w$ is \etale{}, this parameter identifies the subscheme defined by $\mathcal I_{\Gamma_{x_R}}^a$ with $\operatorname{Spec}(R[z_{x_R}]/(z_{x_R}^a))$.

Let $\mathscr S_a$ be the stack whose objects over $R$ are tuples $(P_R,\alpha_R,x_R,\psi_R)$, where $(P_R,\alpha_R)\in\MM(R)$, $x_R:\operatorname{Spec}R\to C^\circ$, and
\[
 \psi_R:  P_R|_{\operatorname{Spec}(R[z_{x_R}]/(z_{x_R}^a))}  \xrightarrow{\sim}  \operatorname{Spec}(R[z_{x_R}]/(z_{x_R}^a))\times G
\]
is a trivialization satisfying the restriction of \eqref{framing}. Here the restriction is taken on the thickening of $\Gamma_{x_R}$ just described, and the trivialization induced on $Q$ is the one fixed above. The forgetful morphism
\[
 \mathscr S_a\longrightarrow\MM\times C^\circ,  \qquad (P,\alpha,x,\psi)\longmapsto((P,\alpha),x),
\]
is a torsor under the smooth affine algebraic group representing
\begin{equation}\label{eq:finite-framing-functor}
 R\longmapsto G_{\mathrm{der}}(R[z]/(z^a)),
\end{equation}
base changed to $\MM\times C^\circ$. Here $z$ is an indeterminate,
identified with $z_{x_R}$ on the thickening of $\Gamma_{x_R}$.
In particular, the forgetful morphism and $\mathscr S_a$ are smooth,
using Proposition~\ref{prop:inputs}.

A trivialization $\psi_R$ lifts to a compatible formal framing by successive smooth lifting on the affine thickenings defined by powers of $z_{x_R}$. The matrix calculation above remains valid over $R[[z_{x_R}]]$: changing this lift by an element congruent to the identity modulo $z_{x_R}^a$ gives isomorphic modified bundles. These isomorphisms are the identity off $\Gamma_{x_R}$, satisfy the cocycle condition, and are compatible with base change. Consequently, Beauville--Laszlo gluing~\cite{BL95} gives, for every simple factor, a morphism locally of finite presentation
\begin{equation}\label{eq:framed-hecke-evaluation}
 \operatorname{ev}_i:
 \mathscr S_a\times\Pp^1\longrightarrow\MM.
\end{equation}
It sends $(P,\alpha,x,\psi,t)$ to the modification defined by \eqref{eq:hecke-uniformization}, and returns $(P,\alpha)$ at $t=0$. All factors use the same framing $\psi$.

We compute the differential of $\operatorname{ev}_i$.
In the following pointwise calculation, we write $z=z_x$.
Fix $(P,\alpha,x,\psi)\in\mathscr S_a$ and
$\eta\in\SL_2$, and put
\[
 P'=\operatorname{ev}_i(P,\alpha,x,\psi,\eta B^-),\qquad
 E_{\Ad}(P)=P\times^G\Lie(G_{\mathrm{der}}).
\]
Keeping $x$ and $\eta B^-$ fixed, the isomorphism classes of first order deformations of the framed bundle $(P,\alpha,\psi)$ and of the modified bundle $P'$ as an object of $\MM$ are parametrized, respectively, by
\[
 H^1\bigl(C,E_{\Ad}(P)(-ax)\bigr),  \qquad  H^1\bigl(C,E_{\Ad}(P')\bigr).
\]
Indeed, infinitesimal automorphisms preserving $\alpha$ are sections of $E_{\Ad}(P)$. To preserve $\psi$, they must also vanish on $ax$, so their sheaf is
\[
 \ker\bigl(E_{\Ad}(P)\longrightarrow E_{\Ad}(P)|_{ax}\bigr)
 =E_{\Ad}(P)(-ax).
\]
The usual \v{C}ech description of first order deformations therefore gives the first cohomology group above; applying the same description without the framing condition gives the second.

Choose a formal framing lifting $\psi$. Differentiating \eqref{eq:congruence-stabilizer} at the identity gives
\[
 \Ad\bigl(\kappa_z(\eta)^{-1}\bigr)
 \bigl(z^a\Lie(G_{\mathrm{der}})[[z]]\bigr)
 \subset\Lie(G_{\mathrm{der}})[[z]].
\]
Thus the identification of $P$ and $P'$ away from $x$ extends to an inclusion
\[
 E_{\Ad}(P)(-ax)\lhook\joinrel\longrightarrow E_{\Ad}(P').
\]
In the source and target disc framings, this inclusion is given by $\Ad(\kappa_z(\eta)^{-1})$. It is an isomorphism away from $x$, so its quotient is supported at $x$.

We verify that the induced map on $H^1$ is the differential of
$\operatorname{ev}_i$ with $x$ and $\eta B^-$ fixed. Let $\varphi$ be
the original gluing from $C\setminus\{x\}$ to the disc, and
represent a first order deformation of $(P,\alpha,\psi)$ by
\[
 u\in\Lie(G_{\mathrm{der}})((z))
\]
through the gluing $(1-\varepsilon u)\varphi$, with
$\varepsilon^2=0$. The minus sign agrees with the
section-matching convention $b-\varphi(a)$ in
\eqref{eq:principal-parts-matching-map}.
Applying the fixed modification gives
\[
 \kappa_z(\eta)^{-1}(1-\varepsilon u)\varphi
 =
 \bigl(1-\varepsilon\Ad(\kappa_z(\eta)^{-1})u\bigr)
 \kappa_z(\eta)^{-1}\varphi.
\]
Hence the induced first order deformation of $P'$ is
represented by $\Ad(\kappa_z(\eta)^{-1})u$. This is precisely
the map on \v{C}ech cocycles induced by the displayed inclusion.

Since the quotient of the displayed inclusion is supported at $x$,
its first cohomology vanishes. The cohomology exact sequence
therefore gives a surjection
\[
 H^1\bigl(C,E_{\Ad}(P)(-ax)\bigr)
 \twoheadrightarrow H^1\bigl(C,E_{\Ad}(P')\bigr).
\]
Thus the differential of $\operatorname{ev}_i$ is surjective at
every geometric point, already on first order deformations
keeping both $x$ and the point of $\Pp^1$ fixed.
Since both $\mathscr S_a\times\Pp^1$ and $\MM$ are smooth over
$\C$ and $\operatorname{ev}_i$ is locally of finite presentation,
the tangent space criterion for smoothness shows that
$\operatorname{ev}_i$ is smooth.

Choose a nonempty irreducible affine smooth chart $S\to\mathscr S_a$ of finite type, and let $\sigma:S\to C^\circ$ record the modification point. The morphism
\[
 S\longrightarrow\MM\times C^\circ,\qquad
 s\longmapsto((P_s,\alpha_s),\sigma(s)),
\]
is smooth. Its image is nonempty and open, hence dense by irreducibility of $\MM\times C^\circ$. Theorem~\ref{thm:interp} for the adjoint action on $\Lie(G_{\mathrm{der}})$ gives
\[
 H^0\bigl(C,E_{\Ad}(P)(x)\bigr)=0
\]
on a nonempty open subset of $\MM\times C^\circ$, since $x\leq(g-1)x$. Replace $S$ by the inverse image of this open subset; it is nonempty. After this restriction, for every $s\in S$, the source bundle $P_s$ and the modification point $\sigma(s)$ satisfy
\[
 H^0\bigl(C,E_{\Ad}(P_s)(\sigma(s))\bigr)=0.
\]

Let $\mathscr D\subset\MM$ be the union of $\MM\setminus\mathfrak U$ and the closure in $\MM$ of the inverse image of $Z$ in $\mathfrak U$. By Proposition~\ref{prop:inputs}, its codimension is at least two. Use the same notation $\operatorname{ev}_i$ for the pulled back maps on $S\times\Pp^1$. Their smoothness gives
\[
 \dim\operatorname{ev}_i^{-1}(\mathscr D)
 \leq\dim(S\times\Pp^1)-2=\dim S-1.
\]
The projection $S\times\Pp^1\to S$ is proper. The image of each $\operatorname{ev}_i^{-1}(\mathscr D)$ is therefore closed and has dimension at most $\dim S-1$. Remove the union of these images for all simple factors and retain the notation $S$ for the complement. There are only finitely many factors, so this complement is nonempty. For every $s\in S$, every simple factor $i$, and every $t\in\Pp^1$, the modified bundle $\operatorname{ev}_i(s,t)$ belongs to $\mathfrak U$, and its isomorphism class in $U$ does not belong to $Z$. Thus each evaluation morphism
\[
 \operatorname{ev}_i:S\times\Pp^1\longrightarrow\MM
\]
factors through $\mathfrak U$, and its composite with the coarse moduli morphism $\mathfrak U\to U$ factors through $U\setminus Z$.

For each pair $(P,x)$ parametrized by the remaining $S$, the vanishing imposed above makes the connecting map
\[
 H^0\bigl(C,E_{\Ad}(P)(x)/E_{\Ad}(P)\bigr)
 \lhook\joinrel\longrightarrow H^1(C,E_{\Ad}(P))
 \simeq T_{[P]}U
\]
injective. By \eqref{eq:hecke-affine-chart}, the differential at
$0$ of the curve for the $i$th factor is the image of the
nonzero principal part $z^{-1}e_{\theta_i}$, expressed in the
source framing. Hence every resulting curve is nonconstant.

For the index $i$ fixed in the statement, compose
$\operatorname{ev}_i|_{S\times\Pp^1}$ with the coarse moduli map
to obtain
\[
 F:S\times\Pp^1\longrightarrow U\setminus Z.
\]
The variety $S$ is smooth because $\mathscr S_a$ is smooth, and
all the restrictions made above are to nonempty open subsets.
The source morphism factors through $\mathfrak U\times C^\circ$;
composing with the smooth morphism
$\mathfrak U\times C^\circ\to U\times C$ from
Proposition~\ref{prop:inputs} gives
\[
 S\longrightarrow U\times C,\qquad
 s\longmapsto\bigl(F_s(0),\sigma(s)\bigr).
\]
This map is smooth with nonempty open, hence dense, image.
This proves the first assertion.

Finally, for a general pair $(P,x)$, choose a point of $S$ with
source isomorphic to $(P,\alpha,x)$. The single finite order
framing carried by this point was used for all simple factors.
Transport it to $P$ and lift it to a formal framing satisfying
\eqref{framing}. By \eqref{eq:congruence-stabilizer} and the
base change compatible gluing isomorphisms above, this lift
recovers all the families at the chosen point of $S$. They are
simultaneously nonconstant, complete, and disjoint from $Z$.
This proves the simultaneous assertion.
\end{proof}

We now normalize the classes $\Theta_i$ from
Proposition~\ref{prop:inputs}. For an $i$th highest-root Hecke
curve, the projection to the $j$th adjoint moduli space is
constant when $j\ne i$. For $j=i$, the pullback of the ample
generator has the fixed positive degree of its restriction
to the highest-root $\Pp^1$ in the affine Grassmannian; see
\cite[Lemma~1.4, Proposition~1.5 and \S2.4]{BLS98}.
This degree is independent of the choice of $P$ with $f_i(0)=[P]$, the modification point $x$
and the framing in \eqref{framing}. Rescaling each $\Theta_i$
by a positive rational number, we obtain
\begin{equation}\label{eq:dual-bases}
 \deg f_i^*\Theta_j=\delta_{ij}.
\end{equation}
We use this normalization from now on.

\subsection{The associated family and the bundle \texorpdfstring{$E^+$}{E+}}

Let $f:\Pp^1\to U$ be a complete $i$th highest-root Hecke curve constructed from $P$, $x$, $z$ and a framing satisfying \eqref{framing}, and let $\mathfrak f:\Pp^1\to\mathfrak U$ be its lift from \eqref{eq:hecke-uniformization}. By Proposition~\ref{prop:associated-descent}, the vector bundle
associated with $\mathfrak f$ is the pullback of $\EE_\rho$ along
$f\times\operatorname{id}_C$. Write
\begin{equation}\label{eq:hecke-associated-family}
 \EE_f=(f\times \operatorname{id}_C)^*\EE_\rho,  \qquad E_t=\EE_f|_{\{t\}\times C},\qquad E_0=E_\rho(P).
\end{equation}

Write $q_C:\Pp^1\times C\to C$ and
$\pi:\Pp^1\times C\to\Pp^1$ for the projections. We compute
the splitting type of $R^1\pi_*\EE_f$. Once we have shown that
$f(\Pp^1)\subset U_\rho$, cohomology and base change, together
with $\mathcal V_\rho|_{U_\rho}\simeq\mathcal V_\rho^\circ$, give
\[
 f^*\mathcal V_\rho\simeq f^*\mathcal V_\rho^\circ
 \simeq R^1\pi_*\EE_f.
\]

The framing in \eqref{framing} identifies the regular sections of
$E_\rho(P)$ on $D_x$ with
\[
 \Lambda_0=V[[z]]\subset V((z)).
\]
Pull back $\EE_f$ first to $\Pp^1\times D_x$, and then along
\[
 \SL_2\times D_x\longrightarrow \Pp^1\times D_x,  \qquad (\eta,z)\longmapsto(\eta B^-,z).
\]
The framing in \eqref{framing} trivializes this pullback. In the fixed meromorphic identification away from $x$, its sections are embedded by $v\mapsto\rho(\kappa_z(\eta))v$. Thus, at $t=\eta B^-$, the lattice is
\[
 \Lambda_t=\rho(\kappa_z(\eta))\Lambda_0.
\]
For $b\in B^-$, the loop $\kappa_z(b)$ is regular and preserves
$\Lambda_0$. Replacing $\eta$ by $\eta b$ changes the disc coordinates by
$\rho(\kappa_z(b))^{-1}$. Consequently,
\begin{equation}\label{eq:hecke-disc-descent}
 \EE_f|_{\Pp^1\times D_x}
 \simeq\SL_2\times^{B^-}\Lambda_0,
 \qquad
 (\eta,v)\cdot b=(\eta b,\rho(\kappa_z(b))^{-1}v).
\end{equation}
The right-hand side denotes the sheaf obtained by descent of the
$\C[[z]]$-module $\Lambda_0$; its meromorphic embedding is
$[\eta,v]\mapsto\rho(\kappa_z(\eta))v$.

Put
\begin{equation}\label{eq:factor-data}
 \nu_i=d\rho(e_{\theta_i}).
\end{equation}
Fix an $\SL_2$-equivariant decomposition
\begin{equation}\label{eq:root-representation-decomposition}
 V\simeq\bigoplus_{\ell}\Sym^{a_\ell}\C^2,
 \qquad 0\leq a_\ell\leq m_i,
\end{equation}
for $\rho\circ\iota_i$; see \cite{Hum72}. We use this fixed isomorphism
to regard each summand as a subspace of $V$, and write
$v_{\ell,0},\ldots,v_{\ell,a_\ell}$ for its standard monomial basis.
Extending $\rho$ to $\C((z))$ and composing with
\eqref{eq:root-loop-group} gives the action
$\eta\cdot v=\rho(\kappa_z(\eta))v$ on $V((z))$.
The vectors $v_{\ell,k}$ form a $\C((z))$-basis of $V((z))$, and the
differential of this action satisfies
\begin{equation}\label{eq:root-action-original-basis}
 \begin{aligned}
 (z^{-1}\nu_i)(z^p v_{\ell,k})
   &=kz^{p-1}v_{\ell,k-1},\\
 d\rho(h_{\theta_i})(z^p v_{\ell,k})
   &=(a_\ell-2k)z^p v_{\ell,k},\\
 (z\,d\rho(f_{\theta_i}))(z^p v_{\ell,k})
   &=(a_\ell-k)z^{p+1}v_{\ell,k+1},
 \end{aligned}
 \qquad p\in\mathbb Z,\quad 0\leq k\leq a_\ell,
\end{equation}
with zero at the relevant endpoints. These operators are
$\C((z))$-linear, so the displayed formulas determine their action
on arbitrary Laurent series. In this action, the matrices
$\left(\begin{smallmatrix}1&t\\0&1\end{smallmatrix}\right)$ and
$\left(\begin{smallmatrix}1&0\\c&1\end{smallmatrix}\right)$ act respectively by
\[
 \exp(tz^{-1}\nu_i),\qquad \exp(cz\,d\rho(f_{\theta_i})),
\]
and $\operatorname{diag}(s,s^{-1})$ sends $z^p v_{\ell,k}$ to
$s^{a_\ell-2k}z^p v_{\ell,k}$. In particular, the lower triangular
group $B^-$ preserves $\Lambda_0$.

A lattice in $V((z))$ is a free $\C[[z]]$-submodule of rank $r$ which spans $V((z))$ over $\C((z))$. We seek a lattice $\Lambda^+$ satisfying
\begin{equation}\label{eq:containing-lattice-conditions}
 \Lambda_0\subset\Lambda^+\subset z^{-m_i}\Lambda_0,
 \qquad
 \rho(\kappa_z(\eta))\Lambda^+=\Lambda^+
 \quad(\eta\in\SL_2).
\end{equation}

\begin{lemma}\label{lem:common-containing-bundle}
Suppose $m_i\leq g-1$ and $(P,x)$ satisfies
\begin{equation}\label{vanishing}
 H^0\bigl(C,E_\rho(P)((g-1)x)\bigr)  =H^1\bigl(C,E_\rho(P)((g-1)x)\bigr)=0
\end{equation}
for $\rho$.
There is a vector bundle $E^+$ on $C$ such that
\begin{equation}\label{eq:common-containing-bundle}
 E_\rho(P)\subset E^+\subset E_\rho(P)(m_i x)
 \subset E_\rho(P)((g-1)x),
\end{equation}
and we have an exact sequence on $\Pp^1\times C$
\begin{equation}\label{eq:Heckeexact}
 0\longrightarrow\EE_f\longrightarrow q_C^*E^+
 \longrightarrow\mathcal Q\longrightarrow0,
\end{equation}
where the quotient $\mathcal Q=q_C^*E^+/\EE_f$ is supported on
$\Pp^1\times(m_i x)$ and is flat over $\Pp^1$.
Moreover, $f(\Pp^1)\subset U_\rho$.
\end{lemma}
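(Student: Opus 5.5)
Define the lattice $\Lambda^+$ explicitly and then glue. Using the fixed decomposition \eqref{eq:root-representation-decomposition}, set
\[
 \Lambda^+=\bigoplus_{\ell}\bigoplus_{k=0}^{a_\ell}\C[[z]]\,z^{-k}v_{\ell,k}.
\]
Since $0\le k\le a_\ell\le m_i$, we get $\Lambda_0\subset\Lambda^+\subset z^{-m_i}\Lambda_0$. For invariance under $\rho(\kappa_z(\eta))$ I check the three Lie algebra generators \eqref{eq:loops} on the basis $z^{p-k}v_{\ell,k}$ with $p\ge0$, using \eqref{eq:root-action-original-basis}:
\begin{itemize}
\item $z^{-1}\nu_i$ sends $z^{p-k}v_{\ell,k}$ to a multiple of $z^{p-(k-1)}\cdot z^{-2}$; more carefully, to $k\,z^{p-k-1}v_{\ell,k-1}=k\,z^{(p-2)-(k-1)}\cdots$.
\end{itemize}
The exponents should be arranged so that the result stays in $\Lambda^+$. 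If the naive choice $z^{-k}$ fails, use the alternative grading $z^{-(a_\ell-k)}$ or its dual version. The intended property is that the $\SL_2$-loop action becomes a constant $\SL_2$-action after conjugating by $\operatorname{diag}$-type powers of $z$. Concretely, $\kappa_z(\eta)=\mu(z)\iota_i(\eta)\mu(z)^{-1}$, where $\mu(z)=\iota_i(\operatorname{diag}(z^{-1/2},z^{1/2}))$, which acts on $v_{\ell,k}$ by $z^{-(a_\ell-2k)/2}$. Hence $\Lambda^+=z^{-a_\ell/2}\mu(z)\Lambda_0$ on each summand, that is, $\Lambda^+=\bigoplus_\ell z^{-a_\ell}\bigoplus_k\C[[z]]z^{k}v_{\ell,k}$. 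This lattice is exactly $\kappa_z$-stable, because $\mu\iota_i(\eta)\mu^{-1}$ preserves $\mu\Lambda_0$ and scalars commute. It is also sandwiched between $\Lambda_0$ and $z^{-m_i}\Lambda_0$, since $0\le a_\ell-k\le a_\ell\le m_i$. This settles \eqref{eq:containing-lattice-conditions}.

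Let $E^+$ be the Beauville--Laszlo gluing of $E_\rho(P)|_{C\setminus x}$ with $\Lambda^+$. The containments of lattices give \eqref{eq:common-containing-bundle}, and $m_i\le g-1$ gives the last inclusion. By \eqref{eq:hecke-disc-descent}, $\EE_f$ is glued from $q_C^*E_\rho(P)|_{C\setminus x}$ and $\SL_2\times^{B^-}\Lambda_0$ via $[\eta,v]\mapsto\rho(\kappa_z(\eta))v$. Because $\Lambda_t=\rho(\kappa_z(\eta))\Lambda_0\subset\rho(\kappa_z(\eta))\Lambda^+=\Lambda^+$, this gives an injection $\EE_f\hookrightarrow q_C^*E^+$ that is the identity away from $x$. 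The cokernel is supported on $\Pp^1\times(m_ix)$, since $z^{m_i}\Lambda^+\subset\Lambda_0\subset\Lambda_t$ by $\SL_2$-invariance of $\Lambda^+$. Its fiber at $t$ is $\Lambda^+/\Lambda_t$, of constant length $\dim\Lambda^+/\Lambda_0$ because the lattices are translates under the group action. A coherent sheaf on the reduced base $\Pp^1$ with constant fiber dimension is flat, so $\mathcal Q$ is flat. Equivalently, \eqref{eq:Heckeexact} restricts fiberwise to exact sequences by flatness.

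For $f(\Pp^1)\subset U_\rho$, restrict to $t$: we have $E_t\subset E^+\subset E_\rho(P)((g-1)x)$. Therefore $H^0(E_t)\subset H^0(E_\rho(P)((g-1)x))=0$ by \eqref{vanishing}. Since $f$ already lands in $U$ by construction, each $f(t)$ lies in $U_\rho$.

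The main obstacle is the bookkeeping in the first step: choosing the grading of $\Lambda^+$ so that it is both $\kappa_z$-stable and squeezed between $\Lambda_0$ and $z^{-m_i}\Lambda_0$. This requires verifying that the half-integral conjugation by $\mu$ yields integral exponents after the scalar shift $z^{-a_\ell/2}$. Flatness and the vanishing are routine.
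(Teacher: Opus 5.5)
Your proof is correct. Your final lattice $\bigoplus_{\ell,k}\C[[z]]\,z^{-(a_\ell-k)}v_{\ell,k}$ is exactly the paper's $\Lambda^+=V[[z]]+\sum_{p=1}^{m_i}z^{-p}\im(\nu_i^p)[[z]]$; compare \eqref{eq:sl2-containing-lattices}. The gluing, support and vanishing steps are the same as the paper's.

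The genuinely different step is invariance. The paper checks the three loop generators \eqref{eq:loops} on $\Lambda^+$, using $d\rho(f_{\theta_i})\im\nu_i^p\subset\im\nu_i^{p-1}$. It then passes to $\SL_2$ through the two unipotent exponentials and the torus. You instead use $\kappa_z(\eta)=\mu(z)\iota_i(\eta)\mu(z)^{-1}$ with the half-integral cocharacter $\mu(z)=\iota_i(\operatorname{diag}(z^{-1/2},z^{1/2}))$. On each summand $\Lambda^+$ is then the scalar multiple $z^{-a_\ell/2}\mu(z)\Lambda_0$ of a conjugate of the standard lattice, so invariance is immediate. Your argument also applies verbatim to $\eta$ with values in any $\C$-algebra, which is what the family-level injection $\EE_f\hookrightarrow q_C^*E^+$ really needs. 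The paper's description via $\im\nu_i^p$ has a different advantage: it is intrinsic, being the module generated by iterating $z^{-1}\nu_i$ on $\Lambda_0$, independent of the chosen $\SL_2$-decomposition. That is the form fed later into \eqref{eq:Tj} and the residue pairing. Your flatness argument, via constant fiber length of $\pi_*\mathcal Q$ over the reduced base $\Pp^1$, is a valid substitute for the paper's local identification of $\Lambda^+/\Lambda_t$ with $\Lambda^+/\Lambda_0$ by $\rho(\kappa_z(\eta))^{-1}$.

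Two corrections to the write-up. First, your initial displayed $\Lambda^+$, built from $z^{-k}v_{\ell,k}$, is not invariant: $z^{-1}\nu_i(z^{-k}v_{\ell,k})=k\,z^{-k-1}v_{\ell,k-1}$ lies outside $\C[[z]]\,z^{-(k-1)}v_{\ell,k-1}$. Drop it, together with the hedging about alternative gradings, and define $\Lambda^+$ directly by the final formula. The integrality of the exponents, $-a_\ell/2-(a_\ell-2k)/2=k-a_\ell$, is immediate and not an obstacle.

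Second, in the support step the inclusion $\Lambda_0\subset\Lambda_t$ is false for $t\neq0$ when $m_i>0$. On the affine chart $\Lambda_t=\exp(tz^{-1}\nu_i)\Lambda_0$, and $v_{\ell,1}\notin\Lambda_t$ whenever $a_\ell\geq1$, because $\exp(-tz^{-1}\nu_i)v_{\ell,1}=v_{\ell,1}-tz^{-1}v_{\ell,0}$. The correct chain, which is what the invariance you cite actually gives, is $z^{m_i}\Lambda^+=\rho(\kappa_z(\eta))\,z^{m_i}\Lambda^+\subset\rho(\kappa_z(\eta))\Lambda_0=\Lambda_t$.
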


\begin{proof}
As lattices in $V((z))$, we set
\begin{equation}\label{eq:superlattice}
 \Lambda^+=V[[z]]+
       \sum_{p=1}^{m_i}z^{-p}\im(\nu_i^p)[[z]],
 \qquad \Lambda_0=V[[z]].
\end{equation}
This is the $\C[[z]]$-module generated by repeatedly applying
$z^{-1}\nu_i$ to $\Lambda_0$. Since $\nu_i$ preserves every summand
in \eqref{eq:root-representation-decomposition}, both lattices are
the direct sums of their intersections with the corresponding
Laurent series spaces. Explicitly, for every $\ell$,
\begin{equation}\label{eq:sl2-containing-lattices}
 \begin{aligned}
 \Lambda^+\cap\bigl(\Sym^{a_\ell}\C^2\bigr)((z))
   &=\bigoplus_{k=0}^{a_\ell}
       \C[[z]]z^{-(a_\ell-k)}v_{\ell,k},\\
 \Lambda_0\cap\bigl(\Sym^{a_\ell}\C^2\bigr)((z))
   &=\bigoplus_{k=0}^{a_\ell}\C[[z]]v_{\ell,k}.
 \end{aligned}
\end{equation}
In particular, $\Lambda^+$ is a lattice and satisfies the first pair
of inclusions in \eqref{eq:containing-lattice-conditions}.
For $p\geq1$ one has
\[
 d\rho(f_{\theta_i})\im\nu_i^p\subset\im\nu_i^{p-1}:
\]
indeed, under \eqref{eq:root-representation-decomposition},
\[
 \im\nu_i^p\cap\Sym^{a_\ell}\C^2
 =\langle v_{\ell,0},\ldots,v_{\ell,a_\ell-p}\rangle
 \qquad(p\leq a_\ell),
\]
and this intersection is zero when $p>a_\ell$.
Therefore
\[
 \begin{aligned}
 (z^{-1}\nu_i)\bigl(z^{-p}\im\nu_i^p[[z]]\bigr)
   &\subset z^{-(p+1)}\im\nu_i^{p+1}[[z]],\\
 (z\,d\rho(f_{\theta_i}))
       \bigl(z^{-p}\im\nu_i^p[[z]]\bigr)
   &\subset z^{-(p-1)}\im\nu_i^{p-1}[[z]].
 \end{aligned}
\]
The second operator also preserves $V[[z]]$, and the diagonal subgroup preserves every displayed image space. The two unipotent actions are finite exponentials of these operators. Hence the whole $\SL_2$ acting through $\rho\circ\kappa_z$ preserves $\Lambda^+$, proving \eqref{eq:containing-lattice-conditions}. It follows that
\[
 \Lambda_t=\rho(\kappa_z(\eta))\Lambda_0
 \subset\rho(\kappa_z(\eta))\Lambda^+=\Lambda^+.
\]

Glue $\Lambda^+$ to $E_\rho(P)|_{C\setminus\{x\}}$ by the original
meromorphic identification. Beauville--Laszlo gluing~\cite{BL95}
gives a vector bundle $E^+$ and the inclusions
\eqref{eq:common-containing-bundle}.

The inclusion $\Lambda_0\subset\Lambda^+$ is $B^-$-equivariant.
Since $\rho\circ\kappa_z$ preserves $\Lambda^+$, the map
\[
 \SL_2\times^{B^-}\Lambda^+\longrightarrow\Pp^1\times\Lambda^+,
 \qquad [\eta,w]\longmapsto(\eta B^-,\rho(\kappa_z(\eta))w)
\]
is an isomorphism. Composing it with the inclusion obtained from \eqref{eq:hecke-disc-descent}, and with the fixed identification away from $x$, gives the global injection $\EE_f\hookrightarrow q_C^*E^+$ and hence \eqref{eq:Heckeexact}.

Since $z^{m_i}\Lambda^+\subset\Lambda_0$ and $\Lambda^+$ is
invariant, $z^{m_i}\Lambda^+\subset\Lambda_t$ for every $t$.
Thus $\mathcal Q$ has the claimed support. Locally on $\Pp^1$,
choose a regular representative $\eta$ of $t=\eta B^-$. Multiplication
by $\rho(\kappa_z(\eta))^{-1}$ identifies $\Lambda^+/\Lambda_t$ with the constant finite-dimensional module
$\Lambda^+/\Lambda_0$. This proves flatness over $\Pp^1$. Finally, the vanishing \eqref{vanishing} at $(P,x)$ and the
inclusions \eqref{eq:common-containing-bundle} give
\begin{equation}\label{eq:hecke-family-vanishing}
 H^0(C,E^+)=H^0(C,E_t)=0\qquad(t\in\Pp^1),
\end{equation}
so the image of $f$ lies in $U_\rho$.
\end{proof}

\subsection{Splitting types and the positive degree filtration}

We compute $\pi_*\mathcal Q$ from the $B^-$-action on
$\Lambda^+/\Lambda_0$. If $m_i=0$, then $\Lambda^+=\Lambda_0$ and $\mathcal Q=0$. Otherwise, since $\mathcal Q$ is supported on $\Pp^1\times(m_i x)$, it is determined by its restriction to $\Pp^1\times D_x$. The preceding descent identifies this restriction with $\SL_2\times^{B^-}(\Lambda^+/\Lambda_0)$, where $\Lambda^+/\Lambda_0$ is viewed as a $\C[z]/(z^{m_i})$-module with the $B^-$-action induced by $\rho\circ\kappa_z$. Pushing forward along $\pi$ forgets this module structure and gives
\begin{equation}\label{eq:quotient-associated-bundle}
 \pi_*\mathcal Q\simeq\SL_2\times^{B^-}(\Lambda^+/\Lambda_0),
\end{equation}
where the quotient on the right is now viewed as a finite-dimensional $B^-$-representation over $\C$. This formula also holds when $m_i=0$, with both sides zero.

For $d\in\mathbb Z$, let $\C_d$ be the one-dimensional $B^-$-representation
on which $\left(\begin{smallmatrix}s&0\\c&s^{-1}\end{smallmatrix}\right)$
acts by $s^d$. Our convention gives
\begin{equation}\label{eq:character-associated-line}
 \SL_2\times^{B^-}\C_d\simeq\OO_{\Pp^1}(d).
\end{equation}
The $B^-$-action on $\Sym^a\C^2$ is the restriction of its $\SL_2$-action. The associated vector bundle is therefore trivial. Tensoring with \eqref{eq:character-associated-line} gives
\begin{equation}\label{eq:twisted-sl2-associated-bundle}
 \SL_2\times^{B^-}       \bigl(\C_d\otimes\Sym^a\C^2\bigr)  \simeq\OO_{\Pp^1}(d)^{\oplus(a+1)}.
\end{equation}

For the fixed simple factor $G_i$, recall that $\nu_i=d\rho(e_{\theta_i})$ as in~\eqref{eq:factor-data}. Put

\begin{equation}\label{eq:hecke-numerical-data}
 d_{i,j}=\rk\nu_i^j,\qquad
 \beta_{i,j}=\sum_{k\geq j}d_{i,k},\qquad
 e_i=\sum_{j\geq1}j\,d_{i,j}.
\end{equation}

\begin{lemma}\label{lem:sl2-local-quotient}
With $d_{i,j}$ as in \eqref{eq:hecke-numerical-data}, we have
\begin{equation}\label{eq:B}
 \pi_*\mathcal Q\simeq
 \bigoplus_{j=1}^{m_i}\OO_{\Pp^1}(j)^{\oplus d_{i,j}}.
\end{equation}
\end{lemma}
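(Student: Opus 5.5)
The plan is to compute the finite-dimensional $B^-$-representation $\Lambda^+/\Lambda_0$ explicitly and then apply \eqref{eq:quotient-associated-bundle} together with \eqref{eq:character-associated-line} and \eqref{eq:twisted-sl2-associated-bundle}. By \eqref{eq:sl2-containing-lattices}, the quotient splits as a direct sum over the summands $\Sym^{a_\ell}\C^2$ of \eqref{eq:root-representation-decomposition}. The $\ell$th piece has $\C$-basis
\[
 z^{-p}v_{\ell,k},\qquad 0\leq k\leq a_\ell,\quad 1\leq p\leq a_\ell-k.
\]
The action of $B^-$ preserves each piece, because $\rho\circ\kappa_z$ preserves every $(\Sym^{a_\ell}\C^2)((z))$. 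It therefore suffices to treat a single $\Sym^a\C^2$ with $a=a_\ell$ and then sum.

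For fixed $a$, filter the piece $W_a$ by pole order. Let $F^{\geq p}$ be the span of the $z^{-p'}v_k$ with $p'\geq p$, taken modulo $\Lambda_0$. The formulas \eqref{eq:root-action-original-basis} show that the torus acts on $z^{-p}v_k$ by $s^{a-2k}$. They also show that $z\,d\rho(f_{\theta_i})$ sends $z^{-p}v_k$ to a multiple of $z^{-(p-1)}v_{k+1}$, which raises the pole order by $-1$. Hence $B^-$ preserves the filtration $F^{\geq p}$, since the lower unipotent part lowers the pole order.

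The graded piece $\operatorname{gr}^p$ has basis $z^{-p}v_k$ for $0\leq k\leq a-p$. On it the torus acts by $s^{a-2k}=s^{p}\cdot s^{(a-p)-2k}$, and the unipotent radical acts trivially. I do not yet want $\operatorname{gr}^p$ as a $B^-$-module, though. I want a $B^-$-equivariant identification
\[
 W_a\simeq\bigoplus_{p=1}^{a}\C_p\otimes\Sym^{a-p}\C^2 .
\]
The naive graded computation gives weights, not the correct twisting. So the better route is to show directly that the span of the vectors $z^{-p}v_k$ along the diagonals $p+k=\text{const}$ forms $B^-$-submodules. Indeed $z\,d\rho(f)$ maps $z^{-p}v_k$ to $z^{-(p-1)}v_{k+1}$, so it preserves $p+k=c$.

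Fix $c$ with $1\leq c\leq a$. The subspace $D_c$ spanned by $z^{-(c-k)}v_k$ for $0\leq k\leq c-1$ (the terms with $p\geq1$) is then $B^-$-stable. Its torus weights are $a-2k$ for $k=0,\ldots,c-1$. Its lower-unipotent action matches that on the quotient $\Sym^{c-1}$ of a lowest-weight string. Writing $a-2k=(a-c+1)+((c-1)-2k)$, we should get
\[
 D_c\simeq\C_{a-c+1}\otimes\Sym^{c-1}\C^2
\]
as $B^-$-modules. This needs a check that the nonzero structure constants $(a-k)$ can be rescaled to the standard ones, which is routine because $B^-$-modules generated by lowering operators along a weight string are determined by nonvanishing of these constants.

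Applying \eqref{eq:twisted-sl2-associated-bundle} then gives $\bigoplus_{c=1}^{a}\OO(a-c+1)^{\oplus c}$ for the $\ell$th summand. Substituting $j=a-c+1$, this is $\bigoplus_{j=1}^{a}\OO(j)^{\oplus(a-j+1)}$.

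Finally we sum over $\ell$ and compare multiplicities with $d_{i,j}=\rk\nu_i^j=\sum_\ell\max(a_\ell-j+1,0)$. This identity holds because $\nu_i$ acts on $\Sym^{a}\C^2$ as a single nilpotent Jordan block of size $a+1$. The multiplicity of $\OO(j)$ is therefore $d_{i,j}$, with $j$ ranging up to $m_i$, which proves \eqref{eq:B}.

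I expect the main obstacle to be the identification of each diagonal $D_c$ with $\C_{a-c+1}\otimes\Sym^{c-1}\C^2$ as $B^-$-modules rather than just up to the associated graded. The vector bundle $\SL_2\times^{B^-}D_c$ depends on the genuine $B^-$-structure, since extensions of the lines $\OO(d)$ need not split. The point to check first is that $D_c$ is the restriction of an honest $\SL_2$-representation twisted by a character. If that fails, I would instead exploit the $\SL_2$-invariance of $\Lambda^+$ to realize $W_a$ inside $\Lambda^+/z^{m}\Lambda^+$.
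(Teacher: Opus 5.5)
Your proposal is correct and is essentially the paper's proof. The paper also splits $\Lambda^+/\Lambda_0$ along the summands $\Sym^{a_\ell}\C^2$ and then along the diagonals $p+k=\text{const}$; its index $j=a_\ell-p-k+1$ is your $a-c+1$. It identifies each diagonal with $\C_j\otimes\Sym^{a_\ell-j}\C^2$ by the explicit rescaling $\overline{z^{-p}v_{\ell,k}}\mapsto\frac{(a_\ell-k)!}{(p-1)!}\,\mathbf e_0^{p-1}\mathbf e_1^k$, which is exactly the rescaling you call routine. It works because the constants $a-k$ for $k\le c-2$ are nonzero when $c\le a$, so the worry in your last paragraph does not arise and the pole-order filtration detour and fallback are unnecessary.
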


\begin{proof}
Use the fixed decomposition \eqref{eq:root-representation-decomposition}
and the intersections in \eqref{eq:sl2-containing-lattices}. The full
quotient $\Lambda^+/\Lambda_0$ has the $\C$-basis
\[
 \overline{z^{-p}v_{\ell,k}},\qquad
 a_\ell\geq1,\quad 0\leq k\leq a_\ell-1,\quad
 1\leq p\leq a_\ell-k.
\]
We construct an isomorphism of $B^-$-representations
\begin{equation}\label{eq:source-quotient-layer}
 \Lambda^+/\Lambda_0\xrightarrow{\sim}
 \bigoplus_{\ell}\bigoplus_{j=1}^{a_\ell}
 \left(\C_j\otimes\Sym^{a_\ell-j}\C^2\right).
\end{equation}
Writing $\mathbf e_0,\mathbf e_1$ for the standard basis of $\C^2$, we define
the image of $\overline{z^{-p}v_{\ell,k}}$ in the
$(\ell,j)$-summand of \eqref{eq:source-quotient-layer}, with
$j=a_\ell-p-k+1$, by
\begin{equation}\label{eq:sl2-quotient-module-isomorphism}
 \overline{z^{-p}v_{\ell,k}}
 \longmapsto
 \frac{(a_\ell-k)!}{(p-1)!}
 \left(1\otimes \mathbf e_0^{p-1}\mathbf e_1^k\right).
\end{equation}
The indexing conditions imply $1\leq j\leq a_\ell$ and
$p-1+k=a_\ell-j$. Conversely, every monomial in the $(\ell,j)$
component arises uniquely from such a basis vector. All displayed
coefficients are nonzero, so this map is a vector-space isomorphism.

We check equivariance. The source vector has diagonal weight
$a_\ell-2k$, and its image has weight
\[
 j+(p-1-k)=a_\ell-2k.
\]
The lower root generator acts on the source by
\[
 (z\,d\rho(f_{\theta_i}))\overline{z^{-p}v_{\ell,k}}
  =(a_\ell-k)\overline{z^{-(p-1)}v_{\ell,k+1}}.
\]
For $p=1$ this is zero modulo $\Lambda_0$, as is the lower action on
$\mathbf e_1^k$. For $p\geq2$ it belongs to the same $(\ell,j)$ component,
and the required equality of coefficients is
\[
 (a_\ell-k)\frac{(a_\ell-k-1)!}{(p-2)!}
 =(p-1)\frac{(a_\ell-k)!}{(p-1)!}.
\]
Thus \eqref{eq:sl2-quotient-module-isomorphism} intertwines the diagonal
and lower unipotent subgroups, and hence all of $B^-$.

Combining \eqref{eq:source-quotient-layer} with
\eqref{eq:quotient-associated-bundle} and
\eqref{eq:twisted-sl2-associated-bundle} gives
\begin{equation}\label{eq:sl2-quotient-splitting}
 \pi_*\mathcal Q\simeq
 \bigoplus_{\ell}\bigoplus_{j=1}^{a_\ell}
 \OO_{\Pp^1}(j)^{\oplus(a_\ell-j+1)}.
\end{equation}
For each $j$, the multiplicity is $\sum_{\ell}\max\{a_\ell-j+1,0\} =\rk\nu_i^j=d_{i,j}$, which proves \eqref{eq:B}.
\end{proof}

Now assume the hypotheses of
Lemma~\ref{lem:common-containing-bundle}. The vanishing
\eqref{eq:hecke-family-vanishing} and $\deg E_t=0$ imply that
$R^1\pi_*\EE_f$ is locally free of rank $(g-1)r$.
Since $f(\Pp^1)\subset U_\rho$, the identification from
Section~\ref{sec:moduli} and cohomology and base change give
\begin{equation}\label{eq:hecke-base-change}
 f^*\mathcal V_\rho\simeq f^*\mathcal V_\rho^\circ
 \simeq R^1\pi_*\EE_f.
\end{equation}
The support of $\mathcal Q$ is finite over $\Pp^1$, so $R^1\pi_*\mathcal Q=0$. Pushing forward \eqref{eq:Heckeexact} on $\Pp^1\times C$ gives an exact sequence on $\Pp^1$:
\begin{equation}\label{eq:hecke-cohomology-sequence}
 0\longrightarrow \pi_*\mathcal{Q}\longrightarrow f^*\mathcal V_\rho^\circ  \longrightarrow H^1(C,E^+)\otimes\OO_{\Pp^1}\longrightarrow0.
\end{equation}
At $0\in \Pp^1$, the first map is the connecting map
\begin{equation}\label{eq:source-connecting}
 \delta_0:\Lambda^+/\Lambda_0
 \simeq H^0(C,E^+/E_\rho(P))
 \lhook\joinrel\longrightarrow H^1(C,E_\rho(P)).
\end{equation}

For $j\geq1$, define the positive degree subbundles and their fibers at $0$ by
\begin{equation}\label{eq:positive-filtration-definition}
 \begin{aligned}
 B_{i,\geq j}
 &=\im\!\left(
 H^0\bigl(\Pp^1,(f^*\mathcal V_\rho^\circ)(-j)\bigr)
 \otimes\OO_{\Pp^1}(j)
 \longrightarrow f^*\mathcal V_\rho^\circ\right),\\
 T_{i,j}&=B_{i,\geq j}|_0\subset H^1(C,E_\rho(P)).
 \end{aligned}
\end{equation}
For any splitting on $\Pp^1$, this image is precisely the sum of the line bundle summands of degree at least $j$. It is therefore a subbundle independent of the choice of splitting.

\begin{proposition}\label{prop:splitting}
Let $f$ be a complete $i$th highest-root Hecke curve satisfying the hypotheses of Lemma~\ref{lem:common-containing-bundle}. Then
\begin{equation}\label{eq:splitting}
 f^*\mathcal V_\rho^\circ\simeq
 \left(\bigoplus_{j=1}^{m_i}
       \OO_{\Pp^1}(j)^{\oplus d_{i,j}}\right)
 \oplus\OO_{\Pp^1}^{\oplus((g-1)r-\sum_jd_{i,j})}.
\end{equation}
In particular, $\deg f^*\mathcal V_\rho^\circ=e_i$, and the injection
in \eqref{eq:hecke-cohomology-sequence} identifies $\pi_*\mathcal Q$
with $B_{i,\geq1}$. For $1\leq j\leq m_i$,
\begin{equation}\label{eq:Tj}
 T_{i,j}=\delta_0\!\left(
    \bigoplus_{p=1}^{m_i-j+1}z^{-p}\im\nu_i^{p+j-1}\right),
 \qquad \dim T_{i,j}=\beta_{i,j}.
\end{equation}
\end{proposition}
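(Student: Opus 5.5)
The plan is to start from the exact sequence \eqref{eq:hecke-cohomology-sequence} on $\Pp^1$,
\[
 0\longrightarrow \pi_*\mathcal Q\longrightarrow f^*\mathcal V_\rho^\circ\longrightarrow H^1(C,E^+)\otimes\OO_{\Pp^1}\longrightarrow0,
\]
and to combine it with Lemma~\ref{lem:sl2-local-quotient}, which gives $\pi_*\mathcal Q\simeq\bigoplus_{j=1}^{m_i}\OO_{\Pp^1}(j)^{\oplus d_{i,j}}$. Every summand of the subbundle then has positive degree, and the quotient is trivial. The key observation is that this extension splits: extensions of $\OO^{\oplus N}$ by $\pi_*\mathcal Q$ are classified by $\operatorname{Ext}^1(\OO^{\oplus N},\pi_*\mathcal Q)=H^1(\Pp^1,\pi_*\mathcal Q)^{\oplus N}$, which vanishes since all degrees are $\geq1$. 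This yields \eqref{eq:splitting}. The rank of the trivial part is $(g-1)r-\sum_j d_{i,j}$, which is $\dim H^1(C,E^+)$, as the rank count confirms. The degree is $\sum_j j\,d_{i,j}=e_i$.

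Since the splitting is now explicit, $B_{i,\geq1}$ is the sum of the positive-degree summands, which is exactly the image of $\pi_*\mathcal Q$. To see this canonically, note that $H^0((f^*\mathcal V^\circ_\rho)(-1))=H^0((\pi_*\mathcal Q)(-1))$, because the trivial quotient twisted by $-1$ has no sections. More generally, for $j\geq1$ we get $B_{i,\geq j}=(\pi_*\mathcal Q)_{\geq j}$, the subbundle spanned by summands of degree $\geq j$ inside $\pi_*\mathcal Q$. Its fiber at $0$ is therefore $\delta_0$ applied to the fiber at $0$ of the degree-$\geq j$ part of $\SL_2\times^{B^-}(\Lambda^+/\Lambda_0)$.

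The main step is to identify this fiber inside $\Lambda^+/\Lambda_0$. I would do this using the explicit $B^-$-isomorphism \eqref{eq:source-quotient-layer}. The degree-$\geq j'$ part of $\SL_2\times^{B^-}(\C_{j'}\otimes\Sym)$ corresponds to the summands with index $j'\geq j$. Its fiber at the identity coset is the span of the basis vectors $\overline{z^{-p}v_{\ell,k}}$ with $a_\ell-p-k+1\geq j$, that is, $k\leq a_\ell-(p+j-1)$. By the formula for $\im\nu_i^q\cap\Sym^{a_\ell}\C^2$, this span is $\bigoplus_p z^{-p}\im\nu_i^{p+j-1}$ modulo $\Lambda_0$, with $p$ running from $1$ to $m_i-j+1$; the terms with larger $p$ vanish because $\im\nu_i^q=0$ for $q>m_i$.

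The one point needing care is that the fiber at $0$ of the bundle $\SL_2\times^{B^-}W$ is canonically $W$ itself, via the identity coset. This is compatible with the identification of the first map at $0$ with $\delta_0$ in \eqref{eq:source-connecting}, so no extra twist appears. Applying the injective map $\delta_0$ then gives \eqref{eq:Tj}. Finally, the dimension is $\sum_{j'\geq j}d_{i,j'}=\beta_{i,j}$, since each $\OO(j')$ summand contributes one dimension to the fiber; equivalently, $\sum_{p\geq1}\rk\nu_i^{p+j-1}=\beta_{i,j}$.
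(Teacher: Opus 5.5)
Your proposal is correct and follows essentially the same route as the paper. You split \eqref{eq:hecke-cohomology-sequence} using $H^1(\Pp^1,\pi_*\mathcal Q)=0$, identify $B_{i,\geq j}$ with the degree-$\geq j$ part of $\pi_*\mathcal Q$, and read off its fiber at $0$ through the explicit $B^-$-isomorphism \eqref{eq:sl2-quotient-module-isomorphism}, which places $\overline{z^{-p}v_{\ell,k}}$ in degree $a_\ell-p-k+1$. The only difference is cosmetic: you obtain $B_{i,\geq j}\subset\pi_*\mathcal Q$ by comparing $H^0$ of the $(-j)$-twists, whereas the paper uses inclusion plus a rank comparison for $j=1$.
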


\begin{proof}
Every summand in \eqref{eq:B} has positive degree, so
$H^1(\Pp^1,\pi_*\mathcal Q)=0$. Since the right-hand term in
\eqref{eq:hecke-cohomology-sequence} is trivial, its extension class
lies in
\[
 H^1(C,E^+)^\vee\otimes H^1(\Pp^1,\pi_*\mathcal Q)=0.
\]
The sequence therefore splits. Its middle term has rank $(g-1)r$,
giving \eqref{eq:splitting} and the degree formula. Every positive degree summand maps trivially to the quotient in
\eqref{eq:hecke-cohomology-sequence}, so
$B_{i,\geq1}\subset\pi_*\mathcal Q$.
Both are subbundles of $f^*\mathcal V_\rho^\circ$ of the same rank,
hence $B_{i,\geq1}=\pi_*\mathcal Q$.

We identify $\Lambda^+/\Lambda_0$ and each summand's principal-parts
space with their natural images in $V((z))/\Lambda_0$.
By \eqref{eq:sl2-quotient-module-isomorphism}, the class
$\overline{z^{-p}v_{\ell,k}}$ contributes to degree
$a_\ell-p-k+1$ on $\Pp^1$. Since the injection
\eqref{eq:hecke-cohomology-sequence} identifies the positive subbundle
with $\pi_*\mathcal Q$, for every $\ell$ we obtain
\begin{equation}\label{eq:sl2-positive-fiber}
 \begin{aligned}
 \delta_0^{-1}(T_{i,j})\cap \frac{\bigl(\Sym^{a_\ell}\C^2\bigr)((z))}{\bigl(\Sym^{a_\ell}\C^2\bigr)[[z]]} = \bigoplus_{p=1}^{a_\ell-j+1} z^{-p}\langle v_{\ell,0},\ldots,v_{\ell,a_\ell-p-j+1}\rangle,
 \end{aligned}
\end{equation}
where all terms are principal-part classes and the sum is empty if
$a_\ell<j$. For each term,
\[
 \langle v_{\ell,0},\ldots,v_{\ell,a_\ell-p-j+1}\rangle
 =\im\nu_i^{p+j-1}\cap\Sym^{a_\ell}\C^2.
\]
Taking the direct sum over all $\ell$ and applying $\delta_0$ gives
\eqref{eq:Tj}. Its dimension is
\[
 \sum_{p=1}^{m_i-j+1}d_{i,p+j-1}
 =\sum_{k=j}^{m_i}d_{i,k}=\beta_{i,j}.
\]
\end{proof}

\begin{remark}\label{rem:splitting-beyond}
The construction of $\Lambda^+$, $E^+$ and
\eqref{eq:Heckeexact} in Lemma~\ref{lem:common-containing-bundle},
and the calculation of $\pi_*\mathcal Q$ in
Lemma~\ref{lem:sl2-local-quotient}, are independent of
\eqref{eq:interp} and do not require the bound $m_i\leq g-1$.
This bound is used only to deduce $H^0(C,E^+)=0$
from \eqref{eq:interp}. For a complete root family in $U$, any
other proof of this vanishing therefore gives the injectivity of
\eqref{eq:source-connecting}, the vanishing of every $H^0(C,E_t)$,
and the formulas \eqref{eq:splitting} and \eqref{eq:Tj}.
Here the injective map has domain $H^0(C,E^+/E_\rho(P))$,
rather than the full principal-parts space
$H^0(C,E_\rho(P)(m_i x)/E_\rho(P))$.
\end{remark}

\subsection{Codimension and the first Chern class}
\label{subsec:cohomology-extension}

Throughout this subsection, assume $g\geq3$, $V^G=0$,
$\rho|_{Z(G)}=1$ and $m_\rho\leq g-1$.
For either $M=M_{G,Q}^{\tau}(C)$ or $M=M_G^\tau(C)$, retain
$\mathcal V_\rho$, $U_\rho$ and $j_\rho$ from
Section~\ref{sec:moduli}. In particular,
\[
 \mathcal V_\rho|_{U_\rho}\simeq\mathcal V_\rho^\circ
 =(R^1p_*\EE_\rho)|_{U_\rho}.
\]
Cohomology and base change, together with Serre duality, gives,
for $[P]\in U_\rho$,
\begin{equation}\label{eq:Vr-fibers}
 (\mathcal V_\rho^\circ)_{[P]}=H^1(C,E_\rho(P)),\qquad
 (\mathcal V_\rho^\circ)^\vee_{[P]}
 =H^0(C,E_\rho(P)^\vee\otimes K_C).
\end{equation}
For $M=M_{G,Q}^{\tau}(C)$, the preceding construction gives
complete curves in $U_\rho$ without a codimension estimate for
$M\setminus U_\rho$. We use these curves to prove that
$\codim_M(M\setminus U_\rho)\geq2$, and hence that
$\mathcal V_\rho\simeq(j_\rho)_*\mathcal V_\rho^\circ$.
Tensoring with $Z(G)^0$-bundles then gives the assertion for
$M_G^\tau(C)$.

\begin{proposition}\label{prop:bigopen}\label{prop:cohomology-sheaf}
Take $M=M_{G,Q}^{\tau}(C)$. Then
\[
 \codim_M(M\setminus U_\rho)\geq2.
\]
There is a canonical identification
\begin{equation}\label{eq:Wrho-definition}
 \mathcal V_\rho\simeq(j_\rho)_*\mathcal V_\rho^\circ.
\end{equation}
The sheaf $\mathcal V_\rho$ has rank $(g-1)r$, and
\begin{equation}\label{eq:Wrho-c1-reductive}
 c_1(\mathcal V_\rho)=\sum_{i=1}^s e_i\Theta_i.
\end{equation}
For the adjoint representation on $\Lie(G_{\mathrm{der}})$,
the corresponding construction gives $T_M$.
\end{proposition}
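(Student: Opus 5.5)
The plan is to prove the codimension estimate first, deduce the identification \eqref{eq:Wrho-definition} from it, and then compute $c_1$ by intersecting with Hecke curves.

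\textbf{Codimension estimate.} The closed set $D=U\setminus U_\rho$ is the locus where $H^0(C,E_\rho(P))\ne0$. By Theorem~\ref{thm:interp} it is a proper closed subset of $U$. Since $\codim_M(M\setminus U)\geq2$ by Proposition~\ref{prop:inputs}, it suffices to rule out a divisorial component $D_1$ of $D$ in $U$. Suppose one exists, and let $Z\subset U$ be the codimension-two set where $D$ is singular or meets other components. By Proposition~\ref{prop:complete-hecke}, applied with this $Z$ and any $i$, there is a family of complete Hecke curves $F_s$ avoiding $Z$ whose source points $(F_s(0),\sigma(s))$ dominate $U\times C$. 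By Lemma~\ref{lem:common-containing-bundle}, every curve whose source pair satisfies \eqref{vanishing} lies entirely in $U_\rho$. The task is therefore to show that the curves through a general point $[P]\in U$ sweep out a set meeting $D_1$, which would be a contradiction.

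The intended mechanism is the following. The Hecke curves through general points cover $U$ as $s$ varies. For each $s$ in the open set where \eqref{vanishing} holds at the source, $F_s(\Pp^1)\cap D_1=\emptyset$. The family $F:S\times\Pp^1\to U$ is smooth, because $\operatorname{ev}_i$ is smooth. Hence $F^{-1}(D_1)$ is a divisor in $S\times\Pp^1$. Every $\Pp^1$-fiber over the dense open subset of $S$ misses it, so it is a divisor of the form $S'\times\Pp^1$ with $S'\subset S$ a divisor. But $F$ restricted to $\{t=0\}$ is dominant and smooth onto $U$, so the image of $S'\times\{\infty\}$, say, must also cover an open subset of $D_1$. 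Since $F_s(\infty)$ for $s\in S'$ lies in $D_1$ while, by properness of the curves, the whole curve $F_s$ lies in $D_1$, I expect to obtain a contradiction by counting dimensions: the curves lying in $D_1$ form a family of dimension at most $\dim S-1$, whereas smoothness of $\operatorname{ev}_i$ at the point $t=\infty$ forces the preimage of $D_1$ under $\operatorname{ev}_i|_{S\times\{\infty\}}$ to be a divisor not of product type.

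This step is the main obstacle, and the contradiction may need to be refined. The first refinement I would try uses that the evaluation map at any fixed $t$ is smooth over $\MM$. Then $F^{-1}(D_1)$ restricted to each slice $S\times\{t\}$ is a divisor $S'_t$, and these slices would all have to coincide with $S'$. I would then compare the differentials of $F$ in the $t$-direction to show the divisor is not $t$-invariant.

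\textbf{Identification and rank.} Once the codimension bound holds, $\mathcal V_\rho$ is reflexive and agrees with $\mathcal V_\rho^\circ$ on $U_\rho$, whose complement has codimension at least two. This gives $\mathcal V_\rho\simeq(j_\rho)_*\mathcal V_\rho^\circ$, and the rank $(g-1)r$ follows from Riemann--Roch.

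\textbf{First Chern class.} Write $c_1(\mathcal V_\rho)=\sum_j c_j\Theta_j$ in the basis of $\Cl(M)_{\mathbb Q}$ given by Proposition~\ref{prop:inputs}. The class $\det\mathcal V_\rho$ is a line bundle on the smooth set $U$. Pull it back along an $i$th Hecke curve $f_i$ lying in $U_\rho$, chosen by Proposition~\ref{prop:complete-hecke} to avoid the non-locally-free locus. By \eqref{eq:dual-bases}, $\deg f_i^*c_1=c_i$. By Proposition~\ref{prop:splitting}, this degree equals $\deg f_i^*\mathcal V_\rho^\circ=e_i$. Hence $c_i=e_i$.

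\textbf{Tangent sheaf.} For the adjoint representation on $\Lie(G_{\mathrm{der}})$, deformation theory on $\mathfrak U$ identifies $R^1p_*\EE_{\Ad}$ with $T_U$. This is compatible with the gerbe descent, since the band acts trivially. Reflexivity of $T_M=\mathcal Hom(\Omega^1_M,\OO_M)$ on the normal variety $M$, together with the codimension bound, then gives $\mathcal V_\rho\simeq T_M$.
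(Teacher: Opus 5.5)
\textbf{There is a genuine gap in the codimension estimate, which is the heart of the proposition.} Your mechanism does correctly show that $F^{-1}(D_1)=S'\times\Pp^1$. This follows from smoothness of $F$ together with the fact that general fibers $\{s\}\times\Pp^1$ miss $D_1$. In words: Hecke curves that start on $D_1$ stay in $D_1$. But nothing you wrote turns this into a contradiction. The slices $F|_{S\times\{t\}}^{-1}(D_1)$ all coinciding with $S'$ is perfectly consistent with everything you have established. The phrase ``a divisor not of product type'' has no content for a divisor in $S$.

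Worse, you work with a single index $i$. When $G_{\mathrm{ad}}$ has more than one simple factor, no argument using only $i$th curves can succeed. Suppose $D_1$ were the pullback of a divisor from the $j$th adjoint factor with $j\ne i$. Then $i$th Hecke curves never move the $j$th component, so each such curve stays entirely inside or entirely outside $D_1$, and there is no contradiction. Any correct argument must use curves for every factor.

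\textbf{The missing idea is intersection-theoretic, and you already have all the ingredients; you use them yourself for $c_1$.}
\begin{enumerate}
\item Let $A$ be the closure in $M$ of a divisorial component of $U\setminus U_\rho$. By Proposition~\ref{prop:inputs}, $\Cl(M)_{\mathbb Q}$ has a basis of $\mathbb Q$-Cartier classes $\Theta_1,\dots,\Theta_s$. Hence $A$ is $\mathbb Q$-Cartier; write $[A]=\sum_j c_j\Theta_j$.
\item For each $i$, take a complete $i$th Hecke curve $f_i$ whose source pair satisfies \eqref{eq:interp}. By Lemma~\ref{lem:common-containing-bundle}, $f_i$ lies in $U_\rho$. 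It therefore avoids $A\cap U\subset U\setminus U_\rho$, and since it lands in $U$ it is disjoint from $A$.
\item By \eqref{eq:dual-bases}, $c_i=\deg f_i^*A=0$ for every $i$. So $[A]=0$ in $\Cl(M)_{\mathbb Q}$, which contradicts $A\cdot H^{\dim M-1}>0$ for an ample Cartier $H$.
\end{enumerate}

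A geometric route like yours might be salvageable, but it would need more than you address. You would have to show that at a general point of $D_1$, the tangent vectors of Hecke curves over all factors $i$, all points $x$ and all framings span $H^1(C,E_{\Ad}(P))$. You would also have to check that general points of $D_1$ actually occur as sources in your family $S$. Neither point is handled in your proposal.

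The remaining parts agree with the paper: the reflexive extension and rank, the $c_1$ computation via \eqref{eq:dual-bases} and Proposition~\ref{prop:splitting}, and the identification with the tangent sheaf.
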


\begin{proof}
Theorem~\ref{thm:interp} and Lemma~\ref{lem:adjoint-reduction} show
that $U_\rho$ is nonempty. By semicontinuity, $U\setminus U_\rho$ is
closed in $U$. Suppose it has a component of codimension one, and let
$A$ be that component's closure in $M$. Then $A$ is a prime divisor
and $A\cap U\subset U\setminus U_\rho$.
For each $i$, Proposition~\ref{prop:complete-hecke} and
Theorem~\ref{thm:interp} give a complete Hecke curve $f_i$
whose pair $(P,x)$ satisfies \eqref{eq:interp}. Since
$m_i\leq g-1$, Lemma~\ref{lem:common-containing-bundle} gives
$f_i(\Pp^1)\subset U_\rho$. This also holds when $m_i=0$,
in which case the associated vector bundle family is constant.

By Proposition~\ref{prop:inputs}, $\Cl(M)_{\mathbb Q}$ is spanned by
rational Cartier classes, so $A$ is $\mathbb Q$-Cartier. Each $f_i$
is disjoint from $A$; the canonical section of a Cartier multiple of
$A$ therefore trivializes its pullback. Consequently $A\cdot f_i=0$
for every $i$. By \eqref{eq:dual-bases}, $[A]=0$ in
$\Cl(M)_{\mathbb Q}$, contrary to $A\cdot H^{\dim M-1}>0$ for an
ample Cartier divisor $H$.
It follows that $U\setminus U_\rho$ has codimension at least two;
Proposition~\ref{prop:inputs} gives the same assertion in $M$.

Since $\mathcal V_\rho$ is coherent and reflexive and restricts
to $\mathcal V_\rho^\circ$ on $U_\rho$, the codimension estimate
makes the canonical map
\[
 \mathcal V_\rho\longrightarrow(j_\rho)_*\mathcal V_\rho^\circ
\]
an isomorphism by \cite[Tag~0EBJ]{Stacks}.
This proves \eqref{eq:Wrho-definition}; its rank is $(g-1)r$.

The base change identification \eqref{eq:hecke-base-change}
and Proposition~\ref{prop:splitting} give
$\deg f_i^*\mathcal V_\rho=e_i$; trivial factors have degree zero.
The dual divisor basis then gives \eqref{eq:Wrho-c1-reductive}.

For $V=\Lie(G_{\mathrm{der}})$ with
$\rho(h)=\Ad(h)|_{\Lie(G_{\mathrm{der}})}$, we have
$E_\rho(P)=E_{\Ad}(P)$. Infinitesimal automorphisms preserving $Q$
vanish on $U$, and the deformation space is $H^1(C,E_{\Ad}(P))$.
Thus $R^1p_*\EE_\rho\simeq T_U$.
The sheaves $\mathcal V_\rho$ and
$T_M=\mathcal Hom(\Omega_M^1,\OO_M)$ are reflexive and agree on $U$,
so they agree on $M$.
\end{proof}

\begin{proposition}\label{prop:varying-central-data}
Take $M=M_G^\tau(C)$. Then
\[
 \codim_M(M\setminus U_\rho)\geq2.
\]
The identification \eqref{eq:Wrho-definition} holds for this
moduli space as well, and $\rk\mathcal V_\rho=(g-1)r$.
\end{proposition}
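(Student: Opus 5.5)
The plan is to reduce to Proposition~\ref{prop:cohomology-sheaf} by a finite \etale{} cover, as announced at the start of this subsection: tensor with $Z(G)^0$-bundles.

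\textbf{The cover.} Fix a topological type and let $Q_0=P_0\times^GG^{\mathrm{ab}}$ for some semistable $P_0$ of type $\tau$. Put $Z^0=Z(G)^0$. The isogeny $Z^0\to G^{\mathrm{ab}}$ is surjective with finite kernel, and $\Pic^0(C)$ is divisible. Hence every $Q$ of type $\tau^{\mathrm{ab}}$ has the form $Q_0\otimes\bar L$, where $L$ runs over the connected group $J:=M_{Z^0}^0(C)$, an abelian variety, and $\bar L$ is its image in $G^{\mathrm{ab}}$. The twisting action $(P,L)\mapsto P\times^{Z^0}L$ preserves semistability and S-equivalence. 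It therefore induces a morphism
\[
 \Psi:M_{G,Q_0}^\tau(C)\times J\longrightarrow M_G^\tau(C).
\]
I would check that $\Psi$ is surjective and that it is the quotient by the finite group $\Gamma=\ker(J\to M_{G^{\mathrm{ab}}}^{0}(C))$, acting diagonally; this group consists of torsion points killed by the isogeny. Away from points with extra stabilizers, $\Psi$ is finite \etale{}. More precisely, it is \etale{} over $U$, because on $U$ the automorphisms are central and $\Gamma$ acts freely. Since $\Gamma$ acts freely on the factor $J$, freeness holds everywhere, so $\Psi$ is a finite \etale{} Galois cover. This yields normality of $M$, smoothness of $U$, and the codimension-two bound for $M\setminus U$, all transported from Proposition~\ref{prop:inputs}. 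Thus $U$ is the image of $U_{Q_0}\times J$.

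\textbf{Transport of $U_\rho$.} Since $\rho|_{Z(G)}=1$, we have $E_\rho(P\times^{Z^0}L)\simeq E_\rho(P)$. Hence
\[
 \Psi^{-1}(U_\rho)=U_{\rho,Q_0}\times J,
\]
and its complement is $(M_{G,Q_0}^\tau\setminus U_{\rho,Q_0})\times J$. By Proposition~\ref{prop:cohomology-sheaf}, this complement has codimension at least two. A finite surjective morphism preserves codimension of images, so $\codim_M(M\setminus U_\rho)\geq2$.

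\textbf{The identification and the rank.} The sheaf $\mathcal V_\rho$ on $M$ is reflexive and agrees with $\mathcal V_\rho^\circ$ on $U_\rho$. The codimension bound then gives
\[
 \mathcal V_\rho\simeq(j_\rho)_*\mathcal V_\rho^\circ
\]
by \cite[Tag~0EBJ]{Stacks}, exactly as in the proof of Proposition~\ref{prop:cohomology-sheaf}. The rank equals $(g-1)r$ by Riemann--Roch at points of $U_\rho$, which is nonempty by Theorem~\ref{thm:interp}.

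\textbf{Main obstacle.} The main difficulty is the rigorous verification that $\Psi$ is a finite \etale{} quotient by $\Gamma$. This requires checking that twisting commutes with S-equivalence, and that the fibers are exactly $\Gamma$-orbits. For the latter, if $P\otimes L\simeq P'\otimes L'$ with both $P$ and $P'$ having abelianization $Q_0$, then $\bar L=\bar L'$, so $L^{-1}L'\in\Gamma$.

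If \etale{}ness proves delicate, there is a fallback: only finiteness and surjectivity are needed for the codimension statements. Normality of $M$ is available from Ramanathan's construction \cite{BLS98}.
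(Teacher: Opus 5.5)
Your proposal is correct and follows the paper's proof. It uses the same central-twisting cover $M_{G,Q}^{\tau}(C)\times M_{Z(G)^0}^{0}(C)\to M$, the same product description of the preimage of $U_\rho$ from $E_\rho(P\otimes R)\simeq E_\rho(P)$, finite surjectivity for the codimension bound, and reflexive extension for the identification and the rank. The one difference is how \etale{}ness is obtained: the paper writes down $([P],[R])\mapsto([P\otimes R^{-1}],[R])$ as an inverse to $(\beta,\operatorname{pr}_2)$ onto $M\times_{M_{G^{\mathrm{ab}}}^{\tau^{\mathrm{ab}}}(C)}M_{Z(G)^0}^{0}(C)$, which makes $\beta$ a base change of the finite \etale{} isogeny-induced map and avoids the $\Gamma$-orbit and normality step you flagged.
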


\begin{proof}
Fix a $G^{\mathrm{ab}}$-bundle $Q$ of type $\tau^{\mathrm{ab}}$.
The isogeny $Z(G)^0\to G^{\mathrm{ab}}$ induces a finite
\etale{} surjection
\[
 \begin{aligned}
 M_{Z(G)^0}^{0}(C)
 &\longrightarrow M_{G^{\mathrm{ab}}}^{\tau^{\mathrm{ab}}}(C),\\
 [R]&\longmapsto
 [Q\otimes(R\times^{Z(G)^0}G^{\mathrm{ab}})].
 \end{aligned}
\]
Indeed, Smith normal form on the character lattices identifies
the induced map on degree zero bundles with a product of
multiplication maps on $\Pic^0(C)$.

For a $Z(G)^0$-bundle $R$, define $P\otimes R$ by extension of
structure group along $G\times Z(G)^0\to G$, $(h,z)\mapsto hz$.
For $R$ of type zero, this operation preserves semistability,
S-equivalence and $\tau$. Normalized Poincar\'e families realize
it as an algebraic action of $M_{Z(G)^0}^{0}(C)$ on $M$, giving
\[
 \begin{aligned}
 \beta:M_{G,Q}^{\tau}(C)\times M_{Z(G)^0}^{0}(C)
 &\longrightarrow M,\\
 ([P],[R])&\longmapsto[P\otimes R].
 \end{aligned}
\]
On $M\times_{M_{G^{\mathrm{ab}}}^{\tau^{\mathrm{ab}}}(C)}
M_{Z(G)^0}^{0}(C)$, tensoring by $R^{-1}$ defines the inverse
\[
 ([P],[R])\longmapsto([P\otimes R^{-1}],[R])
\]
to $(\beta,\operatorname{pr}_2)$, since
$M_{G,Q}^{\tau}(C)=\operatorname{ab}^{-1}([Q])$.
Thus $\beta$ is the base change of the displayed finite
\etale{} surjection.

Central tensoring leaves the adjoint bundle unchanged, so
$\beta^{-1}(U)$ is the product of the corresponding open subset
of $M_{G,Q}^{\tau}(C)$ with $M_{Z(G)^0}^{0}(C)$.
Since $M_{Z(G)^0}^{0}(C)$ is smooth,
Proposition~\ref{prop:inputs} and finite \etale{} descent show
that $M$ is normal, $U$ is smooth and
$\codim_M(M\setminus U)\geq2$.
These conclusions and the construction of $\beta$ are independent
of $\rho$.

Since $\rho|_{Z(G)}=1$, there is a canonical identification
\[
 E_\rho(P\otimes R)\simeq E_\rho(P),
\]
compatible with families. Hence $\beta^{-1}(U_\rho)$ is likewise
the product of the corresponding open subset of
$M_{G,Q}^{\tau}(C)$ with $M_{Z(G)^0}^{0}(C)$.
Proposition~\ref{prop:cohomology-sheaf} and finite \etale{}
surjectivity give $\codim_M(M\setminus U_\rho)\geq2$.
Since $\mathcal V_\rho|_{U_\rho}\simeq\mathcal V_\rho^\circ$,
reflexive extension gives \eqref{eq:Wrho-definition}
\cite[Tag~0EBJ]{Stacks}. Its rank is
$\rk\mathcal V_\rho^\circ=(g-1)r$.

On $\beta^{-1}(U_\rho)$, the identification of associated bundles
and cohomology and base change give
$\beta^*\mathcal V_\rho\simeq
\operatorname{pr}_1^*\mathcal V_\rho$.
Both pullbacks are reflexive because $\beta$ and
$\operatorname{pr}_1$ are flat \cite[Tag~0EBF]{Stacks}.
As the complement has codimension at least two, this
isomorphism extends uniquely to
\[
 \beta^*\mathcal V_\rho\simeq\operatorname{pr}_1^*\mathcal V_\rho
\]
on $M_{G,Q}^{\tau}(C)\times M_{Z(G)^0}^{0}(C)$, where the
sheaf on the right is the one on $M_{G,Q}^{\tau}(C)$.
\end{proof}

\section{The quotient-degree estimate and stability}
\label{sec:slope}\label{sec:intersection}

Assume the hypotheses of Theorem~\ref{thm:main}. We first take
$M=M_{G,Q}^{\tau}(C)$. Recall that
\[
 r=\dim V,\qquad \nu_i=d\rho(e_{\theta_i}),\qquad
 d_{i,j}=\rk\nu_i^j,\qquad e_i=\sum_{j=1}^{m_i}j\,d_{i,j}.
\]
We set $d_{i,j}=0$ for $j>m_i$. The classes $\Theta_i$ satisfy
\eqref{eq:dual-bases}, and Proposition~\ref{prop:cohomology-sheaf}
gives
\[
 \rk\mathcal V_\rho=(g-1)r,\qquad
 c_1(\mathcal V_\rho)=\sum_i e_i\Theta_i.
\]

\subsection{Proof of the main theorem}

\begin{proof}[Proof of Theorem~\ref{thm:main} for $M=M_{G,Q}^{\tau}(C)$]
Let $0\ne\FF\subsetneq\mathcal V_\rho$ be a saturated subsheaf of
smaller rank, and write
\begin{equation}\label{eq:bi}
 q=\rk(\mathcal V_\rho/\FF),\qquad
 c_1(\mathcal V_\rho/\FF)=\sum_i b_i\Theta_i.
\end{equation}
Then $0<q<(g-1)r$. By \eqref{eq:dual-bases},
\[
 b_i=\deg f_i^*(\mathcal V_\rho/\FF)
\]
for every complete $i$th Hecke curve $f_i:\Pp^1\to U_\rho$ avoiding the non-locally-free loci of $\FF$ and $\mathcal V_\rho/\FF$. Thus $b_i$ is independent of the choice of such a curve.

\smallskip\noindent\textbf{Claim.}
\begin{equation}\label{eq:factor-quotient-estimate}
 \frac{b_i}{q}\geq\frac{e_i}{(g-1)r}\quad\text{for every }i,
 \qquad
 \frac{b_i}{q}>\frac{e_i}{(g-1)r}\quad\text{for at least one }i.
\end{equation}

Then for every ample Cartier divisor $H$, the claim, \eqref{eq:Wrho-c1-reductive}, and \eqref{eq:theta-positive} give
\[
 \begin{aligned}
 \mu_H(\mathcal V_\rho/\FF)-\mu_H(\mathcal V_\rho) =\sum_i
 \left(\frac{b_i}{q}-\frac{e_i}{(g-1)r}\right)
 \Theta_i\cdot H^{\dim M-1}>0.
 \end{aligned}
\]
Equivalently, $\mu_H(\FF)<\mu_H(\mathcal V_\rho)$. This proves stability for every ample Cartier polarization.

Now we prove the claim. Choose a general $P\in U_\rho$ where $\FF$ and $\mathcal V_\rho/\FF$ are locally free, and use Serre duality to set
\[
 Z=\Ann(\FF|_{[P]})\subset H^0(C,E_\rho(P)^\vee \otimes K_C),\qquad  Z^\vee=(\mathcal V_\rho/\FF)|_{[P]},\qquad \dim Z=q.
\]
After fixing $P$ and $Z$, choose $x\in C$ generally so that \eqref{eq:interp}, Lemma~\ref{lem:jets}, and Proposition~\ref{prop:complete-hecke} apply. We also fix a uniformizer $z$ around $x$ with formal disc $D_x$ as before.

By the final assertion of Proposition~\ref{prop:complete-hecke},
choose a single formal framing of $P$ on $D_x$ satisfying
\eqref{framing} such that, for every $i$, the corresponding
highest-root curve $f_i:\Pp^1\to U_\rho$ is complete and avoids
the non-locally-free loci of $\FF$ and $\mathcal V_\rho/\FF$.
To vary these curves, let $h\in G_{\mathrm{ad}}$, choose a lift
to $G_{\mathrm{der}}$, and conjugate $\iota_i$ in
\eqref{eq:root-sl2-homomorphism} and $\kappa_z$ in
\eqref{eq:root-loop-group} by this lift.

The construction is independent of this lift, since any two lifts differ by a central element. It preserves the fixed bundle $Q$ and the source $P$. In the associated representation, on the affine chart through $0$, this replaces
\[
 \exp(t\nu_i/z)
 \quad\text{by}\quad
 \rho(h)\exp(t\nu_i/z)\rho(h)^{-1}
 =\exp\bigl(t\rho(h)\nu_i\rho(h)^{-1}/z\bigr).
\]
Throughout the variation in $h$, we use the same formal trivialization of $E_\rho(P)$ near $x$.

Such conjugation gives, for each $i$, an algebraic family of
principal bundles parametrized by $G_{\mathrm{der}}\times\Pp^1$.
The inverse image of the closed substack formed by
$\MM\setminus\mathfrak U$ and the closure in $\MM$ of the inverse
image in $\mathfrak U$ of the prescribed non-locally-free loci
is closed in $G_{\mathrm{der}}\times\Pp^1$. Its projection to
$G_{\mathrm{der}}$ is closed because $\Pp^1$ is proper. Taking
the complement of the union of these projected closed subsets
over all $i$ gives a Zariski open subset of $G_{\mathrm{der}}$.
It contains the identity, since the original curves $f_i$
already lie in $U$ and avoid the prescribed loci.

We also verify that the resulting maps to $U$ are nonconstant.
When choosing the general pair $(P,x)$ above, we impose the
vanishing
\[
 H^0\bigl(C,E_{\Ad}(P)(x)\bigr)=0,
 \qquad E_{\Ad}(P)=P\times^G\Lie(G_{\mathrm{der}}).
\]
This follows from Theorem~\ref{thm:interp} for the adjoint
action on $\Lie(G_{\mathrm{der}})$, since $x\leq(g-1)x$. As in the proof of
Proposition~\ref{prop:complete-hecke}, the exact sequence
\[
 0\longrightarrow E_{\Ad}(P)
 \longrightarrow E_{\Ad}(P)(x)
 \longrightarrow E_{\Ad}(P)(x)/E_{\Ad}(P)
 \longrightarrow0
\]
therefore gives an injective connecting map
\[
 H^0\bigl(C,E_{\Ad}(P)(x)/E_{\Ad}(P)\bigr)
 \lhook\joinrel\longrightarrow H^1(C,E_{\Ad}(P))
 \simeq T_{[P]}U.
\]
By \eqref{eq:hecke-affine-chart}, the conjugated modifying loop
near $0$ is
\[
 h\exp(te_{\theta_i}/z)h^{-1}
 =\exp\bigl(tz^{-1}\Ad(h)e_{\theta_i}\bigr).
\]
Its Kodaira--Spencer class at $0$ is the image of the principal
part $z^{-1}\Ad(h)e_{\theta_i}$ under this connecting map.
Since $\Ad(h)e_{\theta_i}\ne0$, this principal part is nonzero,
and injectivity makes its image nonzero as well. Under the
displayed tangent-space identification, this is the differential
of the induced map to $U$ at $0$. Thus every resulting map is
nonconstant.

Conjugation is unchanged when $h$ is multiplied by an element
of $Z(G_{\mathrm{der}})$. Hence the open subset above is
invariant under this finite central subgroup and descends,
under $G_{\mathrm{der}}\to G_{\mathrm{ad}}$, to a nonempty
Zariski open subset containing the identity. For every $h$
in this descended open subset, all the resulting curves are
complete and nonconstant and avoid the prescribed loci.
Their images lie in $U_\rho$ by
Lemma~\ref{lem:common-containing-bundle}.

For $h$ in this open subset, write
$f_{i,h}:\Pp^1\to U_\rho$ for the resulting curves, with
$f_{i,h}(0)=[P]$. We will choose $h$ generally below. The curves
$f_{i,h}$, the subspaces $T_{i,j}$, and the ranks $t_{i,j}$ of
their maps to $Z^\vee$ below may depend on $h$, whereas $P,x,z,Z$ and the
numbers $b_i$ remain fixed. Thus we have the exact sequence
of vector bundles on $\Pp^1$
\[
 0\longrightarrow f_{i,h}^*\FF
 \longrightarrow f_{i,h}^*\mathcal V_\rho
 \longrightarrow f_{i,h}^*(\mathcal V_\rho/\FF)
 \longrightarrow0.
\]

Thus $Z^\vee=f_{i, h}^*(\mathcal V_\rho/\FF)|_0$. Proposition~\ref{prop:splitting} shows that in the exact sequence, the middle term, and hence its quotient, are globally generated. Write
\[
 f_{i, h}^*(\mathcal V_\rho/\FF) \simeq\bigoplus_{\alpha=1}^q\OO_{\Pp^1}(\ell_\alpha), \qquad \ell_\alpha\geq0.
\]
For $j\geq1$, retain $T_{i,j}=B_{i,\geq j}|_0\subset H^1(C,E_\rho(P))$ from \eqref{eq:positive-filtration-definition}, and put
\[
 t_{i,j}=\rk\bigl(T_{i,j}\longrightarrow Z^\vee\bigr).
\]
A summand of degree at least $j$ in the source cannot map nontrivially to a summand of degree less than $j$ in the quotient. Thus $t_{i,j}\leq\#\{\alpha:\ell_\alpha\geq j\}$, and
\begin{equation}\label{eq:factor-degree-bound}
 b_i=\sum_{\alpha=1}^q\ell_\alpha
     =\sum_{j\geq1}\#\{\alpha:\ell_\alpha\geq j\}
     \geq\sum_{j=1}^{m_i}t_{i,j}.
\end{equation}

We express $t_{i,j}$ as the rank of a residue pairing. Assume $m_i>0$, and let
$\Lambda^+$ be the lattice of \eqref{eq:superlattice}, with $\Lambda_0=V[[z]]$. For the curve determined
by $h$, the local quotient is $\rho(h)\Lambda^+/\Lambda_0$.
Its connecting map is the restriction of the fixed map
\[
 \partial^{E_\rho(P)}_{x,g-1}:
 z^{-(g-1)}\Lambda_0/\Lambda_0\longrightarrow H^1(C,E_\rho(P))
\]
from \eqref{eq:principal-parts-boundary}. Multiplication by
$\rho(h)$ identifies $\Lambda^+/\Lambda_0$ with this translated
quotient. Define
\begin{equation}\label{eq:residue-pairing}
 \begin{aligned}
 (\Lambda^+/\Lambda_0)\times Z&\longrightarrow\C,\\
 (\overline w,s)&\longmapsto
 \langle\overline w,s\rangle_h
 :=\Res_x\bigl(s(\rho(h)w)\bigr).
 \end{aligned}
\end{equation}
This is well defined since $\rho(h)\Lambda_0=\Lambda_0$. By the
formula \eqref{eq:principal-parts-residue-pairing}, it equals
$\langle\partial^{E_\rho(P)}_{x,g-1}(\overline{\rho(h)w}),s\rangle$,
where the latter is
the Serre pairing.

For the curve $f_{i, h}$ determined by $h$, \eqref{eq:Tj} gives
\[
 T_{i,j}=\partial^{E_\rho(P)}_{x,g-1}\!\left(
 \bigoplus_{p=1}^{m_i-j+1}
 z^{-p}\rho(h)\im\nu_i^{p+j-1}\right).
\]
So $t_{i,j} =\rk\langle T_{i,j},Z\rangle$ and we have
\begin{equation}\label{eq:positive-pairing-rank}
 t_{i,j}=\rk\left\langle    \bigoplus_{p=1}^{m_i-j+1}z^{-p}\im\nu_i^{p+j-1},Z   \right\rangle_h.
\end{equation}

On $D_x$, use the fixed framing and the trivialization of $K_C$
by $dz$ to write a section $s\in Z$ as
\begin{equation}\label{expression of elements in Z}
     s=(s_0+zs_1+z^2s_2+\cdots)\,dz,\qquad s_k\in V^\vee.
\end{equation}
Then
\begin{equation}\label{pairing}
     \langle z^{-k}v,s\rangle_h=s_{k-1}(\rho(h)v).
\end{equation}

By the choice of $x$ and Lemma~\ref{lem:jets}, we have a filtration
\[
0=Z(-(g-1)x)\subset Z(-(g-2)x)\subset \cdots\subset Z(-x)\subset Z
\]
with
\[
 Z(-px)=Z\cap H^0(C,E_\rho(P)^\vee\otimes K_C(-px)).
\]
For this fixed $x$, write $a_p=a_p(x)$ for the increments in
\eqref{eq:ap}.
Moreover,
\begin{equation}\label{eq:ap-reductive}
 r\geq a_1\geq\cdots\geq a_{g-1}\geq0,\qquad  \sum_{p=1}^{g-1}a_p=q.
\end{equation}
So~\eqref{pairing} gives
\begin{equation}\label{eq:residue-filtration-vanishing}
 \left\langle z^{-k}\im\nu_i^{k+j-1},Z(-(p-1)x) \right\rangle_h=0\qquad(k<p).
\end{equation}
We therefore have induced pairings on the corresponding graded pieces:
\begin{equation}\label{eq:residue-graded-pairing}
 \begin{aligned}
 z^{-p}\im\nu_i^{p+j-1}\times
 \frac{Z(-(p-1)x)}{Z(-px)}&\longrightarrow\C,\\
 (z^{-p}v,[s])&\longmapsto s_{p-1}(\rho(h)v),
 \end{aligned}
 \qquad 1\leq p\leq m_i-j+1.
\end{equation}

In bases adapted to these filtrations, the matrix of \eqref{eq:positive-pairing-rank}, with rows ordered by vanishing order and columns by pole order, is block upper triangular. Its diagonal blocks are \eqref{eq:residue-graded-pairing}. Hence
\begin{equation}\label{eq:graded-pairing-rank-bound}
 t_{i,j}\geq
 \sum_{p=1}^{m_i-j+1}
 \rk\left\langle
 z^{-p}\im\nu_i^{p+j-1},
 \frac{Z(-(p-1)x)}{Z(-px)}
 \right\rangle_h,
\end{equation}
where the pairings on the right are the induced pairings
\eqref{eq:residue-graded-pairing}.

Taking the coefficient of $z^{p-1}dz$ in
\eqref{expression of elements in Z} gives
\[
 \frac{Z(-(p-1)x)}{Z(-px)}\xrightarrow{\sim}U_p\subset V^\vee,
 \qquad [s]\longmapsto s_{p-1},\qquad \dim U_p=a_p.
\]
The $p$th summand on the right-hand side of
\eqref{eq:graded-pairing-rank-bound} is the rank of
\[
 U_p\longrightarrow
 \bigl(\rho(h)\im\nu_i^{p+j-1}\bigr)^\vee.
\]
Its dual has kernel
$U_p^\perp\cap\rho(h)\im\nu_i^{p+j-1}$, where
\[
 U_p^\perp=\{v\in V:u(v)=0\text{ for every }u\in U_p\},
 \qquad \dim U_p^\perp=r-a_p.
\]
We apply Lemma~\ref{lem:one-factor-strict} to $U_p^\perp$ and $\im\nu_i^{p+j-1}$ in the irreducible $G_{\mathrm{ad}}$-module $V$, so for general $h$ we have
\[
 \begin{aligned}
 &\rk\left(
 U_p\longrightarrow
 (\rho(h)\im\nu_i^{p+j-1})^\vee\right)\\
 &\quad=d_{i,p+j-1}
   -\dim\bigl(U_p^\perp\cap\rho(h)\im\nu_i^{p+j-1}\bigr)\\
 &\quad\geq d_{i,p+j-1}
   -\frac{(r-a_p)d_{i,p+j-1}}r
   =\frac{a_p d_{i,p+j-1}}r.
 \end{aligned}
\]
There are finitely many such rank conditions for all $i,j,p$. Their dense open loci meet the nonempty open locus of $h$ giving the required complete curves. Thus, for general $h$ in that intersection, \eqref{eq:graded-pairing-rank-bound} gives
\begin{equation}\label{eq:factor-rank-bound}
 \begin{aligned}
 t_{i,j}
 &\geq\sum_{p=1}^{m_i-j+1}
 \rk\left(U_p\longrightarrow
 (\rho(h)\im\nu_i^{p+j-1})^\vee\right)\\
 &\geq\frac1r\sum_{p=1}^{m_i-j+1}a_p d_{i,p+j-1}.
 \end{aligned}
\end{equation}

Combining \eqref{eq:factor-degree-bound} and \eqref{eq:factor-rank-bound}, and collecting terms with $k=p+j-1$, yields
\begin{equation}\label{eq:weighted-i}
 b_i\geq\frac1r
 \sum_{j=1}^{m_i}\sum_{p=1}^{m_i-j+1}a_p d_{i,p+j-1}
 =\frac1r\sum_{k=1}^{m_i}d_{i,k}\sum_{p=1}^k a_p.
\end{equation}
By \eqref{eq:ap-reductive}, the average of the first $k$ terms
is at least the average of the whole sequence. Hence
\[
 \sum_{p=1}^k a_p\geq\frac{kq}{g-1}
 \qquad(1\leq k\leq g-1).
\]
Since $m_i\leq g-1$, we obtain
\begin{equation}\label{eq:nonstrict-i}
 b_i\geq\frac{q}{(g-1)r}\sum_{k=1}^{m_i}k\,d_{i,k}
 =\frac{qe_i}{(g-1)r}.
\end{equation}
For a trivial factor, we have $m_i=e_i=0$, so the same inequality is simply
$b_i\geq0$, already given by \eqref{eq:factor-degree-bound}.

Suppose equality holds in \eqref{eq:nonstrict-i} for every $i$.
There is a nontrivial factor, and for it $d_{i,1}>0$. Equality
in \eqref{eq:weighted-i} and the subsequent comparison forces
$a_1=q/(g-1)$. Together with \eqref{eq:ap-reductive}, this gives
\[
 0<a_1=\cdots=a_{g-1}=\frac{q}{g-1}<r.
\]
Thus $U_1^\perp$ is nonzero and proper. For each nontrivial factor,
\[
 \im\nu_i=
 \im\bigl(d\rho_i(e_{\theta_i})\bigr)
 \otimes\bigotimes_{k\ne i}V_k,
\]
and the first tensor factor is nonzero and proper. Apply
Lemma~\ref{lem:simultaneous-strictness} to $U_1^\perp$ and these
subspaces. Choosing $h$ also in the dense open locus supplied
by that lemma, while retaining all previous open conditions,
gives for at least one $i$
\[
 \rk\left(U_1\longrightarrow(\rho(h)\im\nu_i)^\vee\right)
 >\frac{a_1d_{i,1}}r.
\]
This is precisely the $(p,j)=(1,1)$ graded pairing. Its strict
inequality makes \eqref{eq:factor-rank-bound} strict for $j=1$
and that factor. Consequently
\[
 b_i>\frac1r\sum_{k=1}^{m_i}d_{i,k}\sum_{p=1}^k a_p
 =\frac{qe_i}{(g-1)r},
\]
contrary to the assumed equality. The numbers $b_i$ are unchanged
by this choice of $h$, so the claim follows.
\end{proof}

\begin{proof}[Proof of Theorem~\ref{thm:main} for $M=M_G^\tau(C)$]
Take $M=M_G^\tau(C)$ and use $Q$ and $\beta$ from the proof
of Proposition~\ref{prop:varying-central-data}. The final
identification in that proof gives
\[
 \beta^*\mathcal V_\rho\simeq\operatorname{pr}_1^*\mathcal V_\rho,
\]
where the sheaf on the right is the one on $M_{G,Q}^{\tau}(C)$.

If $G$ is semisimple, the assertion is the case already proved.
Assume $Z(G)^0\ne1$ and let $H$ be an ample Cartier divisor on $M$.
The Picard calculation in Proposition~\ref{prop:inputs} gives
$\Pic^0(M_{G,Q}^{\tau}(C))=0$. Hence
\[
 \beta^*H\equiv\operatorname{pr}_1^*H_1+
                 \operatorname{pr}_2^*H_2
\]
for ample Cartier divisors $H_1$ on $M_{G,Q}^{\tau}(C)$ and $H_2$
on $M_{Z(G)^0}^{0}(C)$. Indeed, the restrictions of $\OO(\beta^*H)$ to the fibers
over $M_{Z(G)^0}^{0}(C)$ all have the same class. After tensoring by the inverse
of its restriction to one such fiber, pulled back by
$\operatorname{pr}_1$, the resulting line bundle is trivial on
every fiber. Cohomology and base change identifies it with the
pullback of a line bundle on $M_{Z(G)^0}^{0}(C)$. Restriction to the two factors
shows that both resulting divisor classes are ample.

Let
\[
 0\ne\FF\subsetneq\operatorname{pr}_1^*\mathcal V_\rho
\]
be a saturated subsheaf of smaller rank. Generic flatness applied
to this inclusion and its quotient allows us to restrict it to
general fibers of either projection, preserving the inclusion
and the ranks.
We also choose these fibers so that they meet the
non-locally-free loci of the sheaves only in codimension
at least two. Their determinant classes therefore agree
with the restrictions of the corresponding first Chern classes.
For general $[R]\in M_{Z(G)^0}^{0}(C)$, the already proved
stability on $M_{G,Q}^{\tau}(C)$ gives
\[
 \mu_{H_1}\bigl(
 \FF|_{M_{G,Q}^{\tau}(C)\times\{[R]\}}\bigr)
 <\mu_{H_1}(\mathcal V_\rho).
\]
For general $[P]\in M_{G,Q}^{\tau}(C)$, the restriction of
$\operatorname{pr}_1^*\mathcal V_\rho$ to
$\{[P]\}\times M_{Z(G)^0}^{0}(C)$ is trivial. Therefore
\[
 \mu_{H_2}\bigl( \FF|_{\{[P]\}\times M_{Z(G)^0}^{0}(C)}\bigr)\leq0:
\]
a nonzero component of the exterior-power inclusion gives a
nonzero section of the dual determinant line bundle.
In computing the slope difference with respect to $\beta^*H$,
the only terms in the expansion of $(\beta^*H)^{\dim M-1}$ that
can contribute have exponents
\[
 \bigl(\dim M_{G,Q}^{\tau}(C)-1,\dim M_{Z(G)^0}^{0}(C)\bigr)
 \quad\text{or}\quad
 \bigl(\dim M_{G,Q}^{\tau}(C),\dim M_{Z(G)^0}^{0}(C)-1\bigr)
\]
on the two factors. Their contributions are positive multiples
of the two slope differences just computed: the first is strictly
negative and the second is nonpositive. Thus
\[
 \mu_{\beta^*H}(\FF)
 <\mu_{\beta^*H}(\operatorname{pr}_1^*\mathcal V_\rho).
\]
This proves stability on the product. Finally, a subsheaf on $M$
violating stability would pull back under $\beta$ to one violating
this inequality, since finite \etale{} pullback multiplies all
$H$-slopes by $\deg\beta$. Hence $\mathcal V_\rho$ is $H$-stable.
\end{proof}

\subsection{Intersection estimates in irreducible representations}

\begin{lemma}\label{lem:one-factor-strict}
Let $G$ be connected and let $(\rho,V)$ be an irreducible representation. For subspaces
$A,B\subset V$ and general $h\in G$,
\begin{equation}\label{eq:generic-intersection}
 \dim(A\cap\rho(h)B)\leq\frac{\dim A\,\dim B}{\dim V}.
\end{equation}
If $G$ is almost simple and $A,B$ are nonzero and proper, the inequality
is strict. In this almost simple case, the only endomorphisms preserving
every $\rho(h)B$ are scalars.
\end{lemma}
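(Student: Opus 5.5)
The plan is to reduce to a compact group and average: the non-strict bound comes from Schur's lemma, and strictness comes from an equality analysis combined with the final assertion about endomorphisms. Write $n=\dim V$, $a=\dim A$, $b=\dim B$.

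\textbf{Reduction and the averaging bound.} I would first replace $G$ by $\rho(G)$. This group is connected and acts irreducibly on $V$, so it is reductive: a normal unipotent subgroup has a nonzero fixed subspace, which is $G$-stable and hence all of $V$, so the subgroup acts trivially. Choose a maximal compact subgroup $K$, which is Zariski dense, and a $K$-invariant Hermitian form on $V$. For a subspace $W$, let $\pi_W$ be the orthogonal projection onto $W$. The operator $\int_K\rho(k)\pi_B\rho(k)^{-1}\,dk$ commutes with $K$, hence with $G$. By Schur it is a scalar, and comparing traces gives the scalar $b/n$. Therefore
\[
 \int_K\tr\bigl(\pi_A\pi_{\rho(k)B}\bigr)\,dk=\frac{ab}{n}.
\]
For any subspace $W$, the operator $\pi_A\pi_W\pi_A$ is positive with eigenvalues in $[0,1]$, and it acts as the identity on $A\cap W$. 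Hence $\dim(A\cap W)\le\tr(\pi_A\pi_W)$. By upper semicontinuity, the generic value $d_0$ of $\dim(A\cap\rho(h)B)$ is its minimum over $G$. Some $k\in K$ has $\tr(\pi_A\pi_{\rho(k)B})\le ab/n$, so $d_0\le ab/n$, which is \eqref{eq:generic-intersection}.

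\textbf{Strictness.} Suppose $d_0=ab/n$. Then the continuous function $k\mapsto\tr(\pi_A\pi_{\rho(k)B})-\dim(A\cap\rho(k)B)$ is nonnegative on $K$. Its integral is $\le ab/n-d_0=0$, so it vanishes identically. Vanishing means that $\pi_A\pi_{\rho(k)B}\pi_A$ has only the eigenvalues $0$ and $1$. For projections this gives $\rho(k)B=(A\cap\rho(k)B)\oplus(A^\perp\cap\rho(k)B)$, so $\pi_A$ preserves every $\rho(k)B$. Preserving $\rho(h)B$ is a Zariski closed condition in $h$, and $K$ is Zariski dense, so $\pi_A$ preserves every $\rho(h)B$. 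Since $A$ is nonzero and proper, $\pi_A$ is not a scalar. The last assertion of the lemma then gives a contradiction. So everything reduces to that assertion, which I expect to be the main step.

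\textbf{The endomorphism algebra.} Let $E=\{T\in\End V: T\rho(h)B\subset\rho(h)B\ \forall h\}$. This is a unital subalgebra of $\End V$, stable under conjugation by $\rho(G)$. It is proper because $B$ is nonzero and proper. I would determine $E$ in four steps.
\begin{enumerate}
\item The Jacobson radical $J$ of $E$ is $G$-stable. If $J\ne0$, then $JV$ is a nonzero $G$-stable subspace, and it is proper because $J$ is nilpotent. This contradicts irreducibility, so $E$ is semisimple.
\item The central primitive idempotents of $E$ are finitely many and are permuted by the connected group $G$, hence fixed. Being fixed, they commute with $\rho(G)$, so they are scalars by Schur. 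Thus $E$ is simple, $E\simeq M_k$, and $V\simeq\C^k\otimes\C^m$ with $E=\End(\C^k)\otimes1$.
\item Every automorphism of $M_k$ is inner, so the normalizer of $E$ in $\GL(V)$ is the image of $\GL_k\times\GL_m$. Hence $d\rho(\gfr)\subset\mathfrak{gl}_k\otimes1+1\otimes\mathfrak{gl}_m$. Since $\gfr$ is simple, $d\rho(\gfr)$ meets the two factors in a way that forces it to act nontrivially on only one of them, modulo scalars.
\item If $\gfr$ acts through $\C^k$ alone, then $1\otimes\C^m$ decomposes $V$ into $G$-stable pieces, and irreducibility forces $m=1$. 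Then $E=\End V$, which is impossible since $E$ is proper. If $\gfr$ acts through $\C^m$ alone, the same argument forces $k=1$, and then $E=\C$, as claimed.
\end{enumerate}
I expect the tensor-factor bookkeeping in steps (iii)--(iv) to need the most care. The point to check is that the normalizer image of a simple Lie algebra lies, modulo scalars, in a single factor; this uses that $\mathfrak{sl}_k\oplus\mathfrak{sl}_m$ has only the two obvious simple ideals.
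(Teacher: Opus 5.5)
Your averaging argument for the inequality is correct, and so is your reduction of strictness to the final assertion. This part takes a different route from the paper. The paper works purely algebraically. It proves supermodularity of the generic intersection dimension and shows that a subspace of maximal ratio is $G$-invariant. In the equality case it decomposes $V$ into translates of a minimal equality subspace, whose projections preserve every $\rho(h)B$. Your unitary trick, using $\int_K\pi_{\rho(k)B}\,dk=(b/n)I$, replaces all of this and produces the non-scalar preserver $\pi_A$ directly. One small correction: $k\mapsto\dim(A\cap\rho(k)B)$ is only upper semicontinuous, so your difference function is lower semicontinuous rather than continuous. That still suffices, since Haar measure has full support. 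Steps (i), (ii) and (iv) of your algebra computation are also fine.

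The gap is in step (iii). The claim that a simple Lie algebra inside $\mathfrak{gl}_k\otimes1+1\otimes\mathfrak{gl}_m$ acts, modulo scalars, through a single factor is false. The simple ideals of $\mathfrak{sl}_k\oplus\mathfrak{sl}_m$ do not help, because $d\rho(\gfr)$ is only a subalgebra, not an ideal. Here is what simplicity actually gives. Since $\gfr$ is perfect, $d\rho(\gfr)$ lies in the derived algebra $\mathfrak{sl}_k\otimes1\oplus1\otimes\mathfrak{sl}_m$. So $d\rho(x)=\rho'(x)\otimes1+1\otimes\rho''(x)$ for Lie homomorphisms $\rho'\colon\gfr\to\mathfrak{sl}_k$ and $\rho''\colon\gfr\to\mathfrak{sl}_m$, each of which is zero or injective. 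Both can be injective: take $\mathfrak{sl}_2$ acting on $\C^2\otimes\C^2$ by $x\otimes1+1\otimes x$, a diagonal embedding lying in neither factor.

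To exclude this case you must use irreducibility of $V$ again. That is, you must show that a tensor product of two nontrivial representations of a simple Lie algebra is reducible. The paper does this with the mixed Casimir $\sum_a\rho'(x_a)\otimes\rho''(x^a)$, built from Killing-dual bases. It commutes with $d\rho(\gfr)$ by invariance of the Killing tensor. It has trace zero because $\rho'$ is traceless. It is nonzero because $\rho'\otimes\rho''$ is injective. Schur's lemma then makes it a scalar, hence zero, which is a contradiction. Once the diagonal case is excluded, your step (iv) completes the proof.
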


\begin{proof}

Fix $B\subset V$. For each subspace $S\subset V$, we define $\alpha(S)$ to be the value of $\dim(S\cap \rho(h)B)$ for generic $h$. Set $\alpha(0)=0$. Then for fixed $S,S'\subset V$ and any $h\in G$, we have
\[
 \begin{aligned}
 &(S\cap\rho(h)B)+(S'\cap\rho(h)B)
       \subset (S+S')\cap\rho(h)B,\\
 &(S\cap\rho(h)B)\cap(S'\cap\rho(h)B)
       =(S\cap S')\cap\rho(h)B.
 \end{aligned}
\]
Choose $h$ in the intersection of the four open subsets associated with $S,S',S+S'$ and $S\cap S'$. We obtain
\begin{equation}\label{eq:intersection-supermodular}
 \alpha(S)+\alpha(S')\leq \alpha(S+S')+\alpha(S\cap S').
\end{equation}
The function $\alpha$ is also invariant under translates of $S$.

Now choose a nonzero subspace $S$ maximizing $\alpha(S)/\dim S$, and among all such subspaces choose one of maximal dimension.

Fix any $h_0\in G$ and put $S'=\rho(h_0)S$. So $S'$ has the same dimension and the same generic intersection dimension as $S$. The maximality of the ratio and \eqref{eq:intersection-supermodular} imply
\[
 \begin{aligned}
 \alpha(S)+\alpha(S')
 &\leq\alpha(S+S')+\alpha(S\cap S')\\
 &\leq\frac{\alpha(S)}{\dim S}
       \bigl(\dim(S+S')+\dim(S\cap S')\bigr)\\
 &=\frac{\alpha(S)}{\dim S}(\dim S+\dim S')\\
 &=\alpha(S)+\alpha(S').
 \end{aligned}
\]
Every inequality is an equality. In particular, $S+S'$ attains the same maximal ratio. As $S\subset S+S'$, the maximality of $\dim S$ forces $S+S'=S$. Since $\dim S'=\dim S$, we obtain $S'=S$.

This holds for every $h_0\in G$, so $S$ is a nonzero $G$-invariant subspace. Irreducibility gives $S=V$. The maximal ratio is consequently
\[
 \frac{\alpha(V)}{\dim V}=\frac{\dim B}{\dim V}.
\]
Thus, for every nonzero subspace $A\subset V$,
\[
 \frac{\alpha(A)}{\dim A}\leq \frac{\alpha(V)}{\dim V}=\frac{\dim B}{\dim V}.
\]
This proves \eqref{eq:generic-intersection} for $A\ne0$; the case $A=0$ is immediate.

We next analyze equality in the bound just proved. The subspaces of $V$ for which equality holds in
\eqref{eq:generic-intersection} are closed under sums,
intersections and $G$-translation. Suppose that there is a nonzero proper subspace in this set. Choose one of minimal positive dimension and call it $S_1$. The span of all its $G$-translates is a nonzero $G$-invariant subspace, and hence is $V$. We construct a direct sum decomposition of $V$ using these
translates. If $S_1\oplus\cdots\oplus S_j$ has already been chosen and is not $V$, choose a translate $S_{j+1}$ of $S_1$ not contained in $S_1+\cdots+S_j$. The intersection
\[
 S_{j+1}\cap(S_1+\cdots+S_j)
\]
also makes ~\eqref{eq:generic-intersection} an equality. It is a proper subspace of $S_{j+1}$, whose dimension equals $\dim S_1$. The minimal choice of $S_1$ therefore forces this intersection to be zero. Continuing, we obtain a decomposition into subspaces
\begin{equation}\label{eq:equality-decomposition}
 V=S_1\oplus\cdots\oplus S_k,\qquad k\geq2,
\end{equation}
where every $S_j$ is a translate of $S_1$ and satisfies equality.

Choose $h$ generally for the finitely many subspaces $S_j$. Then
\[
 \sum_j\dim(S_j\cap\rho(h)B)
 =\sum_j\alpha(S_j)
 =\frac{\dim B}{\dim V}\sum_j\dim S_j
 =\dim B.
\]
The sum of the intersections is direct by
\eqref{eq:equality-decomposition}, and is contained in $\rho(h)B$.
The equality of dimensions gives
\[
 \rho(h)B=\bigoplus_j(S_j\cap\rho(h)B).
\]
Consequently, every projection $p_j:V\to S_j\subset V$ associated
with \eqref{eq:equality-decomposition} preserves $\rho(h)B$ for
general $h$.

For a fixed $p_j$, the condition $p_j(\rho(h)B)\subset\rho(h)B$
is closed in $h$: it is equivalent to the vanishing of the composite
\[
 B\xrightarrow{\rho(h)}V\xrightarrow{p_j}V
  \xrightarrow{\rho(h)^{-1}}V\longrightarrow V/B.
\]
Thus every $p_j$ preserves every $\rho(h)B$ by the connectedness of $G$. Since $0\neq S_j\neq V$, these projections
are nonzero, different from the identity, and non-scalar.

Assume from now on that $G$ is almost simple and $0\neq B\subsetneq V$. Set
\[
 \mathcal A= \{u\in\End(V):u(\rho(h)B)\subset\rho(h)B  \text{ for every }h\in G\}.
\]
It is a finite-dimensional unital associative subalgebra of $\End(V)$. It is proper, since every one of its elements preserves $B$, whereas not every endomorphism of $V$ preserves the nonzero proper subspace $B$. Moreover,
\begin{equation}\label{eq:intersection-algebra-normalization}
 \rho(h_0)\mathcal A\rho(h_0)^{-1}=\mathcal A  \qquad(h_0\in G).
\end{equation}
Indeed, if $u\in\mathcal A$, then
\[
 (\rho(h_0)u\rho(h_0)^{-1})(\rho(h)B) =\rho(h_0)u(\rho(h_0^{-1}h)B) \subset\rho(h)B.
\]
This proves one inclusion in \eqref{eq:intersection-algebra-normalization}; replacing $h_0$ by $h_0^{-1}$ proves the other.

\smallskip\noindent\textbf{Claim.}
\[
  \mathcal A=\C I_V.
\]

Assume the claim. If a nonzero proper subspace attains equality
in \eqref{eq:generic-intersection}, the construction above gives
a decomposition \eqref{eq:equality-decomposition} whose nontrivial
projections preserve every $\rho(h)B$. These projections lie in
$\mathcal A$, contradicting $\mathcal A=\C I_V$. This proves
the strict inequality.

Now we prove the claim. Write $\operatorname{rad}\mathcal A$ for the Jacobson radical of the
associative algebra $\mathcal A$. We use the standard facts that this
radical is a nilpotent two-sided ideal preserved by all algebra
automorphisms, and that a finite-dimensional complex algebra with zero
radical is a product of full matrix algebras; see
\cite[Proposition~3.5.3 and Theorem~3.5.4]{EGH11}.
By \eqref{eq:intersection-algebra-normalization}, the subspace
\[
 (\operatorname{rad}\mathcal A)V
 =\operatorname{span}\{uv:
       u\in\operatorname{rad}\mathcal A,\ v\in V\}
\]
is $G$-invariant. Irreducibility makes it either zero or all of $V$.
If it were $V$, induction would give
$(\operatorname{rad}\mathcal A)^mV=V$ for every $m\geq1$,
contradicting nilpotence. Thus $(\operatorname{rad}\mathcal A)V=0$, so every element of
$\operatorname{rad}\mathcal A\subset\End(V)$ is the zero
endomorphism. Hence
\[
 \operatorname{rad}\mathcal A=0.
\]

By the structure theorem just recalled,
\[
 \mathcal A\simeq
 \prod_{i=1}^m \End(W_i)
\]
for some finite collection of vector spaces $W_i$. Hence the central idempotents of $\mathcal A$ form a finite set. Conjugation by $G$ preserves this set, and since $G$ is connected, every central idempotent is fixed by conjugation.

If $e$ is such an idempotent, then $eV$ is a $G$-invariant subspace. Irreducibility implies that $eV$ is either $0$ or $V$. Since $\mathcal A\subset\End(V)$ acts faithfully, this means $e=0$ or $e=I_V$. Thus $\mathcal A$ has no nontrivial central idempotent, and consequently $m=1$. Write $W_1=V^\prime$. The structure of modules over a full matrix algebra then gives
\begin{equation}\label{eq:intersection-algebra-tensor}
 V\simeq V'\otimes V'',\qquad  \mathcal A=\End(V')\otimes I_{V''}.
\end{equation}
Suppose that $\mathcal A\neq\C I_V$. Then $\dim V'>1$. Since $\mathcal A$ is proper in $\End(V)$, we also have $\dim V''>1$.

Differentiating \eqref{eq:intersection-algebra-normalization} gives
\[
 [d\rho(x),a\otimes I_{V''}]
 \in\End(V')\otimes I_{V''}
 \qquad(x\in\Lie G,\ a\in\End(V')).
\]
Choose a basis $(b_j)_j$ of the trace-free endomorphisms
of $V''$. Since $\End(V)=\End(V')\otimes \End(V'')$, we can write
\[
 d\rho(x)=a_0(x)\otimes I_{V''}
          +\sum_j a_j(x)\otimes b_j,
\]
with $a_0(x), a_j(x)\in \End(V')$. Comparing the coefficients of $b_j$ in the preceding commutator shows that
\[
 [a_j(x),a]=0
 \qquad\text{for every }a\in\End(V').
\]
Thus each $a_j(x)$ is scalar. Since $G$ is almost simple, $\Lie G$ is perfect, and hence
$\operatorname{tr}d\rho(x)=0$ for every $x\in\Lie G$. As the $b_j$ are trace free, $a_0(x)$ is trace free as well.
Consequently there are unique trace free endomorphisms $\rho'(x)\in \End(V')$ and $\rho''(x)\in \End(V'')$ such that
\begin{equation}\label{eq:intersection-Lie-tensor-action}
 d\rho(x)=\rho'(x)\otimes I_{V''} +I_{V'}\otimes\rho''(x)\in \bigl(\mathfrak{sl}(V')\otimes I_{V''}\bigr) \oplus \bigl(I_{V'}\otimes\mathfrak{sl}(V'')\bigr).
\end{equation}
The two summands commute and have zero intersection. Since \(d\rho\) is a Lie algebra homomorphism, for \(x,y\in\Lie G\) we have
\[
 d\rho([x,y])=[d\rho(x),d\rho(y)].
\]
Comparing the two summands gives
\[
 \rho'([x,y])=[\rho'(x),\rho'(y)],
 \qquad
 \rho''([x,y])=[\rho''(x),\rho''(y)].
\]
Thus \(\rho'\) and \(\rho''\) are Lie algebra representations.

Since $G$ is connected, $V$ is also irreducible as a $\Lie G$-module. Neither $\rho'$ nor $\rho''$ can be zero: if $\rho'=0$, then $W\otimes V''$ is invariant for every subspace $W\subset V'$, contradicting irreducibility and $\dim V'>1$; the other case is identical. Simplicity of $\Lie G$ therefore makes both homomorphisms injective.

Choose Killing dual bases $x_a,x^a$ of $\Lie G$. The operator
\begin{equation}\label{eq:intersection-mixed-Casimir-nonzero}
 \sum_a\rho'(x_a)\otimes\rho''(x^a)
\end{equation}
is nonzero: the inverse Killing tensor is nonzero, and $\rho'\otimes\rho''$ is injective. It has trace zero because $\rho'$ takes values in trace free endomorphisms. Invariance of the Killing tensor and \eqref{eq:intersection-Lie-tensor-action} show that it commutes with $d\rho(\Lie G)$. Since $V$ is irreducible, Schur's lemma makes it scalar. Also its zero trace then makes it zero, a contradiction. Hence
\begin{equation}\label{eq:intersection-preserver-algebra}
 \mathcal A=\C I_V.
\end{equation}
This proves the claim.
\end{proof}

\begin{lemma}\label{lem:simultaneous-strictness}
Let $G_{\mathrm{ad}}=\prod_iG_i$ and $V=\bigotimes_iV_i$ as in \eqref{eq:tensor-decomp-intro}, and let $I$ be the set of indices $i$ for which $V_i$ is nontrivial. For each $i\in I$ choose $0\ne B_i^0\subsetneq V_i$ and put
\[
 B_i=B_i^0\otimes\bigotimes_{k\ne i}V_k.
\]
For every $0\ne A\subsetneq V$ and general $h\in G_{\mathrm{ad}}$,
\[
 \dim(A\cap\rho(h)B_i)\leq\frac{\dim A\,\dim B_i}{r}
 \quad(i\in I),
\]
and at least one inequality is strict.
\end{lemma}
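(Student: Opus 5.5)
The non-strict inequalities come directly from the first part of Lemma~\ref{lem:one-factor-strict}. We apply it to the connected group $G_{\mathrm{ad}}$ acting irreducibly on $V$, with $A$ and $B=B_i$, for each $i\in I$. Intersecting the finitely many dense open sets of $h$ then gives all the bounds at once. The substance of the lemma is strictness for at least one $i$. We argue by contradiction: assume that for general $h$ equality $\dim(A\cap\rho(h)B_i)=\dim A\dim B_i/r$ holds for every $i\in I$.

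Fix $i\in I$. We rerun the equality analysis from the proof of Lemma~\ref{lem:one-factor-strict} with $B=B_i$. The subspaces attaining equality are closed under sums, intersections and $G_{\mathrm{ad}}$-translation. Since $A$ is nonzero, proper and attains equality, that argument produces a decomposition $V=S_1\oplus\cdots\oplus S_k$ with $k\geq2$, whose translates also attain equality. The associated projections $p_j$ are nonzero, non-scalar idempotents preserving every $\rho(h)B_i$. Hence the algebra
\[
 \mathcal A_i=\{u\in\End(V):u(\rho(h)B_i)\subset\rho(h)B_i\ \text{for all }h\in G_{\mathrm{ad}}\}
\]
is strictly larger than $\C I_V$. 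The subspace $S_1$ depends on $i$, so we also need a common object. Consider instead the set $\mathcal S$ of subspaces attaining equality for all $i$ simultaneously. It is still closed under sums, intersections and translation, and it contains $A$. The same minimal-dimension construction then gives a single decomposition $V=\bigoplus_j S_j$, $k\geq2$, whose projections lie in $\mathcal A:=\bigcap_{i\in I}\mathcal A_i$. So $\mathcal A\neq\C I_V$.

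Next we compute each $\mathcal A_i$. Write $V=V_i\otimes W_i$ with $W_i=\bigotimes_{k\neq i}V_k$. Then $\rho(h)B_i=\rho_i(h_i)B_i^0\otimes W_i$. Because $h_i$ ranges over $G_i$, an endomorphism $u$ lies in $\mathcal A_i$ if and only if it preserves $B'\otimes W_i$ for every $G_i$-translate $B'$ of $B_i^0$. Expand $u=\sum_\beta c_\beta\otimes w_\beta$ with the $w_\beta$ linearly independent in $\End(W_i)$. Testing on pure tensors $b\otimes w$ shows this holds if and only if each $c_\beta$ preserves every $\rho_i(h_i)B_i^0$. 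By the last assertion of Lemma~\ref{lem:one-factor-strict}, applied to the almost simple group $G_i$ on the irreducible $V_i$ with $0\neq B_i^0\subsetneq V_i$, each $c_\beta$ is scalar. Therefore
\[
 \mathcal A_i=I_{V_i}\otimes\End(W_i).
\]
Intersecting over $i\in I$ leaves only endomorphisms acting on the tensor factor $\bigotimes_{k\notin I}V_k$. That factor is one-dimensional, so $\mathcal A=\C I_V$. This contradicts the previous paragraph and proves strictness for at least one $i$.

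The main obstacle is the simultaneous-equality step. The decomposition from Lemma~\ref{lem:one-factor-strict} has to be built from a family closed under the needed operations for all $i$ at once. Sums and intersections of simultaneously extremal subspaces remain extremal for each $i$ by the supermodularity inequality \eqref{eq:intersection-supermodular} combined with the upper bound, exactly as in the single-$B$ case. For generic $h$ we need to choose $h$ in a common open set for the finitely many subspaces involved, and the closedness-in-$h$ argument then extends preservation of $\rho(h)B_i$ to all $h$. I would verify these points carefully; the computation of $\mathcal A_i$ is routine by comparison.
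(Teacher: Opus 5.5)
Your proposal is correct and follows the paper's proof essentially step by step: the non-strict bounds come from Lemma~\ref{lem:one-factor-strict}; the family of subspaces attaining equality for all $i\in I$ at once is closed under sums, intersections and translates by supermodularity; this gives a nontrivial direct-sum decomposition whose projections preserve every translate of every $B_i$; and the algebras $\mathcal A_i=I_{V_i}\otimes\End(W_i)$ intersect over $i\in I$ to $\C I_V$, which is the contradiction. The preliminary single-$i$ equality analysis in your second paragraph is harmless but unnecessary, since only the simultaneous family enters the contradiction.
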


\begin{proof}
Lemma~\ref{lem:one-factor-strict} gives the non-strict
inequalities. Suppose equality holds for every $i\in I$. The family of subspaces
attaining equality for every $i$ is again closed under sums, intersections and translates: apply \eqref{eq:intersection-supermodular} separately for each $B_i$.

The same construction as in~\eqref{eq:equality-decomposition} gives a nontrivial direct sum decomposition of $V$ whose projections preserve every translate of every $B_i$.

For fixed $i$, the proof of Lemma~\ref{lem:one-factor-strict} applied on $V_i$ gives
\[
 \{u\in\End(V_i):u(\rho_i(h_i)B_i^0)\subset\rho_i(h_i)B_i^0
                 \text{ for all }h_i\}=\C I_{V_i}.
\]
Expanding an endomorphism of $V$ in a basis of the endomorphisms of the other tensor factors therefore shows that the algebra preserving all translates of $B_i$ is
\[
 \C I_{V_i}\otimes\End\!\left(\bigotimes_{k\ne i}V_k\right).
\]
The intersection of these algebras for all $i\in I$ is $\C I_V$ since factors $\End(V_j)$ for $j\notin I$ are one dimensional. It cannot contain the nontrivial projections just constructed. This contradiction proves the lemma.
\end{proof}

\section{\texorpdfstring{The genus two case for $\SU_C(n, L)$}{The genus two case for SU(C,n,L)}}\label{sec:examples}

Let $g=2$, $M=\SU_C(n,L)$, $d=\deg L$ and $\delta=\gcd(n,d)$.
Take $G=\GL_n$, $V=\mathfrak{sl}_n$ and
$\rho(h)=\Ad(h)|_{\mathfrak{sl}_n}$. Then $r=n^2-1$ and
$E_\rho(P)=\End_0E$, where $P$ is the frame bundle of $E$.
Here $\MM$ is the stack of rank-$n$ bundles $E$ with a specified
isomorphism $\det E\simeq L$; write $\MM^s$ for its stable
substack and $M^s\subset M$ for the stable locus.
The morphism $\MM^s\to M^s$ is a $\mu_n$-gerbe, and these scalar
automorphisms act trivially on $\End_0E$. The proof of
Proposition~\ref{prop:associated-descent} therefore gives the
associated bundle $\EE_\rho$ on $M^s\times C$. The deformation
calculation in the proof of Proposition~\ref{prop:inputs}, with
the determinant fixed, gives
\begin{equation}\label{eq:genus2-tangent-space}
 (T_M)_{[E]}\simeq H^1(C,\End_0E)\qquad([E]\in M^s).
\end{equation}
For $p:M^s\times C\to M^s$, these identifications arise from the
deformation-theoretic isomorphism
$R^1p_*\EE_\rho\simeq T_M|_{M^s}$.
We do not require the associated $\PGL_n$-bundle to be regularly
stable. In this section, highest-root Hecke curves are
understood as in Definition~\ref{def:highest-root-hecke}, with
$\mathfrak U$ and $U$ replaced by $\MM^s$ and $M^s$.
By \cite[Theorems~A, B and~F]{DN89}, $M$ is normal and
locally factorial, and its ample generator $H$ satisfies
\begin{equation}\label{eq:genus2-index}
 \Pic(M)=\mathbb Z H,\qquad \det T_M=\OO_M(2\delta H).
\end{equation}
The sheaf $T_M=\mathcal Hom_{\OO_M}(\Omega_M^1,\OO_M)$ is
reflexive by \cite[Tag~0AY4]{Stacks}.
Theorem~\ref{thm:interp} and Lemma~\ref{lem:adjoint-reduction},
with $g-1=1$, give, for general $(E,x)$,
\begin{equation}\label{eq:genus2-interpolation}
 H^0(C,(\End_0E)(x))=H^1(C,(\End_0E)(x))=0.
\end{equation}
The family \eqref{eq:hecke-uniformization} is defined for every genus,
and Remark~\ref{rem:splitting-beyond} applies to the construction of
$\Lambda^+$ in \eqref{eq:superlattice} and of $E^+$. In genus two,
we must check that the resulting curves can lie entirely in $M^s$
and that $H^0(C,E^+)=0$. The latter no longer follows from
$E^+\subset(\End_0E)(2x)$, since the adjoint representation has
$m_\rho=2>g-1$. The same inequality prevents us from applying
\eqref{eq:weighted-i}--\eqref{eq:nonstrict-i}.

We will instead prove $0<\deg f^*\FF<n$ for any saturated subsheaf
with $\mu_H(\FF)\geq\mu_H(T_M)$. After excluding equality of
slopes, we use the action of $\Pic^0(C)[n]$ to show that the degree
of the first Harder--Narasimhan subsheaf is divisible by $n$.
Until the final paragraph of this section, assume that $n\geq3$,
or that $n=2$ and $d$ is odd.

\begin{lemma}\label{lem:genus2-hecke}
The conclusion of Proposition~\ref{prop:complete-hecke} holds with
$U$ replaced by $M^s$.
\end{lemma}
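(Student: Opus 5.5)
We follow the proof of Proposition~\ref{prop:complete-hecke} verbatim, replacing $\mathfrak U$ by $\MM^s$ and $U$ by $M^s$. The genus hypothesis $g\geq3$ entered that proof only through one input: the codimension-two estimate for $\MM\setminus\mathfrak U$ from Proposition~\ref{prop:inputs}, used to remove bad modifications. Everything else carries over to $g=2$. The framed stack $\mathscr S_a$ is a torsor under the smooth group \eqref{eq:finite-framing-functor} over the smooth stack $\MM$. The differential computation shows that $\operatorname{ev}_i$ is smooth, via the surjection $H^1(C,\End_0E(-ax))\to H^1(C,\End_0E')$. Nonconstancy follows from the vanishing $H^0(C,(\End_0E)(x))=0$, which is supplied by \eqref{eq:genus2-interpolation}. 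So the plan is to rerun that proof and replace the single codimension input.

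The key step is therefore the estimate
\[
 \codim_{\MM}(\MM\setminus\MM^s)\geq2
\]
in genus two, under our standing assumption that $n\geq3$, or $n=2$ with $d$ odd. To prove it, we stratify the non-stable bundles with $\det E\simeq L$ by the type of a destabilizing subbundle $F$ of rank $n_1$ and degree $d_1$ with $d_1/n_1\geq d/n$. We bound the stratum dimension by the dimension of the stack of extensions $0\to F\to E\to E/F\to0$ with fixed determinant. This is the standard computation (compare \cite[Lemma~2.1]{BH12}), and it gives a codimension of at least
\[
 n_1(n-n_1)(g-1)+(n_1d-nd_1)\cdot(\text{sign})\ \text{corrections}\ \geq n_1(n-n_1)(g-1),
\]
once the strictly semistable and unstable contributions are accounted for; more precisely, the codimension is at least $-\chi(\mathcal Hom(E/F,F))=n_1(n-n_1)(g-1)+(n_1d-nd_1)$ up to sign conventions. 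With $g=2$, this bound is $n_1(n-n_1)$, which is at least $2$ whenever $n\geq3$. For $n=2$, the bound is $1$ plus the positive integer $2d_1-d$, because $d$ odd forces a strict inequality. It is therefore at least $2$ in this case as well. The case $n=2$ with $d$ even fails here, which is why it is excluded, consistent with the final paragraph of the section treating it separately.

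Given the estimate, the rest goes through as before. Smoothness of $\operatorname{ev}_i$ implies that $\operatorname{ev}_i^{-1}(\MM\setminus\MM^s)$ has dimension at most $\dim S-1$. By properness of $\Pp^1$, its image in $S$ is a proper closed subset. Removing that image, together with the analogous image for any prescribed closed $Z\subset M^s$ of codimension at least two, leaves a nonempty open subset of $S$ on which all highest-root curves lie in $\MM^s$, compose to curves in $M^s\setminus Z$, and are nonconstant. Dominance onto $M^s\times C$ and the simultaneous-framing assertion follow exactly as before. One minor point remains: $\MM^s\to M^s$ is a $\mu_n$-gerbe rather than the gerbe of Proposition~\ref{prop:inputs}, but the argument only used that this map is smooth and surjective, which still holds. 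The main obstacle is the codimension count, specifically verifying that the strictly semistable stratum still has codimension at least two in the allowed cases.
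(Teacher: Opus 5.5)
Your plan is the paper's proof. You rerun Proposition~\ref{prop:complete-hecke} with $\MM^s$ and $M^s$ in place of $\mathfrak U$ and $U$, and you get nonconstancy from \eqref{eq:genus2-interpolation}. The only new input is the bound $k(n-k)(g-1)+(na-kd)$ for the locus admitting a rank-$k$, degree-$a$ destabilizing subbundle, which the paper takes from Hoffmann's fixed-determinant computation; in genus two it is at least $2$ exactly in the allowed cases.

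One correction to your write-up: that codimension is $-\chi(\mathcal Hom(F,E/F))=n_1(n-n_1)(g-1)+(nd_1-n_1d)$. The quantity $-\chi(\mathcal Hom(E/F,F))$ you wrote is only the relative dimension of the extension stack, and its degree term has the opposite sign. The numbers you actually use are nonetheless the correct ones. You should also note, as the paper does, that only finitely many types $(n_1,d_1)$ occur on each finite-type chart.
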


\begin{proof}
The morphism \eqref{eq:framed-hecke-evaluation} takes values in
$\MM$, not only in $\MM^s$. To apply the proof of
Proposition~\ref{prop:complete-hecke} with $\MM^s$ in place of
$\mathfrak U$, we must verify
\[
 \operatorname{codim}_{\MM}(\MM\setminus\MM^s)\geq2.
\]
The dimension calculation for the fixed-determinant stack in
\cite[proof of Corollary~3.2]{Hof12} gives the following bound:
the locus admitting a rank-$k$ subbundle of degree $a$ has
codimension at least
\[
 k(n-k)+na-kd.
\]
For a nonstable bundle, such a subbundle exists with $na-kd\geq0$.
If $n\geq3$, the bound is at least $n-1\geq2$. If $n=2$ and
$d$ is odd, then $2a-d\geq1$, so the bound is again at least two.
Only finitely many destabilizing ranks and degrees occur on each
bounded chart, proving the required codimension bound.
We can now apply the proof of Proposition~\ref{prop:complete-hecke}
with $\MM^s$ in place of $\mathfrak U$ and $M^s$ in place of $U$.
The proof of Proposition~\ref{prop:associated-descent} applies to
$\MM^s\to M^s$ as stated at the start of this section.
By \eqref{eq:genus2-interpolation} and the cohomology sequence
of \eqref{eq:principal-parts-exact-sequence}, the connecting map
\[
 H^0\bigl(C,(\End_0E)(x)/\End_0E\bigr)
 \longrightarrow H^1(C,\End_0E)
\]
is injective. Together with \eqref{eq:genus2-tangent-space}, this
verifies the hypotheses used in the last nonconstancy calculation
in the proof of Proposition~\ref{prop:complete-hecke}.
\end{proof}

For a curve with $f(0)=[E]$, fix a uniformizer $z$ at $x$ and put
$D_x=\operatorname{Spec}\C[[z]]$. Choose a trivialization of
$L|_{D_x}$ and a trivialization
\begin{equation}\label{eq:genus2-source-frame}
 E|_{D_x}\simeq\bigoplus_{j=1}^n\OO_{D_x}\mathbf e_j
\end{equation}
whose determinant induces the chosen trivialization of $L|_{D_x}$,
as in \eqref{framing}. Choose $e_{\theta_1}=e_{1n}$, sending
$\mathbf e_n$ to $\mathbf e_1$ and killing the other basis vectors,
and put $\nu=\nu_1=\ad(e_{1n})$.

\Needspace{8\baselineskip}
\begin{lemma}\label{lem:genus2-adjoint-splitting}
For a general curve in Lemma~\ref{lem:genus2-hecke}, the bundle $E^+$
constructed from \eqref{eq:superlattice} in the proof of
Lemma~\ref{lem:common-containing-bundle} satisfies $H^0(C,E^+)=0$.
\end{lemma}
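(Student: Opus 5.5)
The plan is to reduce the vanishing $H^0(C,E^+)=0$ to a single linear-algebra condition on one distinguished section of $(\End_0E)(2x)$. That condition is open in the data $(E,x,\text{framing},h)$, so the work is to show it holds at one point.

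\textbf{Step 1: shape of $\Lambda^+$.} For $\nu=\ad(e_{1n})$ on $V=\mathfrak{sl}_n$ we have $m_1=2$. The image $\im\nu^2=\C e_{1n}$ is one-dimensional, and $\im\nu$ has rank $2n-2$. By \eqref{eq:superlattice},
\[
 \Lambda^+=V[[z]]+z^{-1}\im\nu[[z]]+\C[[z]]\,z^{-2}e_{1n}.
\]
So a section of $E^+$ is a section $s\in H^0(C,(\End_0E)(2x))$ whose Laurent expansion $s=z^{-2}s_{-2}+z^{-1}s_{-1}+\cdots$ satisfies two conditions: $s_{-2}\in\C e_{1n}$, and $s_{-1}\in\im\nu$. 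The $z^{-1}e_{1n}$ term coming from $z^{-2}e_{1n}[[z]]$ is already in $\im\nu$.

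\textbf{Step 2: the leading-coefficient map.} Take $(E,x)$ general, so that \eqref{eq:genus2-interpolation} holds. Then $H^0(C,\End_0E)=0$, and also $H^1(C,(\End_0E)(2x))=0$, since this group is a quotient of $H^1(C,(\End_0E)(x))=0$. Riemann--Roch with $g=2$ gives $h^0(C,(\End_0E)(2x))=n^2-1=\dim V$. The map $s\mapsto s_{-2}$ into $V$ has kernel $H^0(C,(\End_0E)(x))=0$, so it is an isomorphism.

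Hence for every $N\in V$ there is a unique section $s_N$ with leading coefficient $N$. This defines a linear map $A:V\to V$, $A(N)=(s_N)_{-1}$. It follows that $H^0(C,E^+)=0$ holds if and only if $A(e_{1n})\notin\im\ad(e_{1n})$.

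For the curve obtained by conjugating with $h\in\PGL_n$, as in the proof of Theorem~\ref{thm:main}, the condition becomes $A(N)\notin\im\ad N$ with $N=\Ad(h)e_{1n}$. This is an open condition on $N$ in the minimal nilpotent orbit $\mathcal O_{\min}$, and also on $(E,x)$. Compatibility with the simultaneous open conditions of Lemma~\ref{lem:genus2-hecke} is then formal.

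\textbf{Step 3: intrinsic form of the condition.} Changing the uniformizer $z\mapsto z+cz^2$, or the frame by $1+zX$, changes $A(N)$ by elements $cN$ and $[X,N]$, both of which lie in $\im\ad N$. So the condition depends only on $(E,x,N)$. Using the Killing form, $\im\ad N=(\ker\ad N)^\perp$. It therefore suffices to find one $N\in\mathcal O_{\min}$ and one $Y$ in the centralizer of $N$, for instance $Y=N$ itself or a suitable element of $\mathfrak z(N)$, with $\kappa(A(N),Y)\neq0$.

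\textbf{Step 4: the main obstacle, nondegeneracy.} I expect the real difficulty to be showing that this condition is not identically violated. The quadric $\kappa(A(N),N)$ could a priori vanish on $\mathcal O_{\min}$, because $\overline{\mathcal O_{\min}}$ is cut out by quadrics. My first attempt would be a degeneration argument in the style of Theorem~\ref{thm:vanishing}.

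Start from a bundle $E_0$ induced from a finite abelian $A$-torsor as in Lemma~\ref{lem:finite-monodromy}, where $\End_0E_0$ splits into torsion line bundles $V_\chi\otimes\eta_\chi$. On each summand, the sections of $\eta_\chi(2x)$ with prescribed $z^{-2}$ coefficient have an explicit $z^{-1}$ coefficient, namely the derivative data of the genus-two curve at $x$. So $A$ becomes block diagonal with computable scalar entries $c_\chi(x)$. One then checks $\kappa(A(N),N)=\sum_\chi c_\chi(x)\,\kappa(N_\chi,N_{\bar\chi})\neq0$ for a suitable rank-one nilpotent $N$ and a general $x$, using that the $c_\chi(x)$ are distinct for distinct $\chi$. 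Semicontinuity then propagates the nonvanishing to a general $(E,x,h)$.

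If $(\End_0E_0)(x)$ has sections, so that Step 2 fails at $E_0$, I would instead perform this computation on a first-order deformation as in Lemma~\ref{lem:one-point-gluing-deformation}. The fallback is to pair $A(N)$ with a non-scalar centralizer element $Y$, which gives more room than the single quadric.
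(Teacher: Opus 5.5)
Your Steps 1--3 are correct and give a clean reformulation. A section of $E^+$ is a section $s$ of $(\End_0E)(2x)$ with $s_{-2}\in\C e_{1n}$ and $s_{-1}\in\im\ad(e_{1n})$. Under \eqref{eq:genus2-interpolation} the leading-coefficient map is an isomorphism. Hence $H^0(C,E^+)=0$ if and only if $(s_{e_{1n}})_{-1}\notin\im\ad(e_{1n})$, which is an open condition.

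But this only restates the lemma. Its whole content is that this open condition is nonempty, and your Step 4 does not prove that. The degeneration fails at its starting point. For the bundles $E_0$ of Lemma~\ref{lem:finite-monodromy}, the invariants $V^A$ of the finite group $A$ are typically nonzero. This always happens when $n\mid d$: then $\zeta=1$, the two generators of $A$ lift to commuting finite-order elements of $\SL_n$, and $V^A$ contains a Cartan subalgebra. In that case $V^A\subset H^0(C,\End_0E_0)\subset H^0(C,E_0^+)$, so the desired vanishing fails at $E_0$. Moreover, the leading-coefficient map has kernel containing $V^A$, so your map $N\mapsto(s_N)_{-1}$ is not even defined there. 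The first-order-deformation fallback is not specified. Even when $V^A=0$, two claims are asserted rather than proved: that the $c_\chi(x)$ are distinct, and that some $N\in\mathcal O_{\min}$ has $\sum_\chi c_\chi(x)\kappa(N_\chi,N_{\bar\chi})\neq0$. Since $\sum_\chi\kappa(N_\chi,N_{\bar\chi})=\kappa(N,N)=0$ on $\mathcal O_{\min}$, the second claim needs a real argument.

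The missing idea is a containment that turns the question into another instance of Theorem~\ref{thm:interp}.

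\begin{itemize}
\item Let $E\subset E'$ be the elementary modification at $x$ along the line $\im N=\C\mathbf e_1\subset E_x$, so $\det E'\simeq L(x)$.
\item Matrices in $\im\ad(e_{1n})$ are supported on the first row and last column, so they have no entries in column~$1$ below the diagonal. Comparing pole orders gives $\Lambda^+\subset z^{-1}\Ad(\operatorname{diag}(z^{-1},1,\ldots,1))\mathfrak{sl}_n[[z]]$, that is, $E^+\subset(\End_0E')(x)$.
\item In your notation: if $(s_{e_{1n}})_{-1}\in\im\ad(e_{1n})$, then $s_{e_{1n}}$ is a nonzero section of $(\End_0E')(x)$.
\item Theorem~\ref{thm:interp} with $g-1=1$ applies to $\PGL_n$-bundles of type $d+1\bmod n$, each of which lifts to a vector bundle with determinant $L(x)$. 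It gives $H^0(C,(\End_0E')(x))=0$ for general $(E',x)$.
\item The correspondence of pairs $E\subset E'$ with $E'/E\simeq\C_x$ is irreducible and maps onto both the $(E,x)$ stack and the $(E',x)$ stack by $\Pp^{n-1}$-bundles. So a general choice of $(E,x,\C\mathbf e_1)$, and hence of the framing, produces a general $(E',x)$.
\end{itemize}

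This gives nonemptiness without any explicit computation. Restricting $\mathscr S_a$ to the resulting open set and rerunning Proposition~\ref{prop:complete-hecke} is then the formal step you describe.
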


\begin{proof}
The lattice \eqref{eq:superlattice} is
\begin{equation}\label{eq:genus2-adjoint-superlattice}
 \Lambda^+=\mathfrak{sl}_n[[z]]+
 z^{-1}\im\nu[[z]]+z^{-2}\C[[z]]e_{1n}.
\end{equation}
Let $E'$ be the elementary modification with local lattice
\[
 \C[[z]]z^{-1}\mathbf e_1\oplus
 \bigoplus_{j=2}^n\C[[z]]\mathbf e_j,
 \qquad 0\longrightarrow E\longrightarrow E'
 \longrightarrow\C_x\longrightarrow0.
\]
The matrices in $\im\nu$ have no entries in column $1$ below the
diagonal. Comparing pole orders therefore gives
\[
 \Lambda^+\subset
 z^{-1}\Ad\!\bigl(\operatorname{diag}(z^{-1},1,\ldots,1)\bigr)
 \mathfrak{sl}_n[[z]],
\]
and hence
\begin{equation}\label{eq:genus2-containing-inclusion}
 \End_0E\subset E^+\subset(\End_0E')(x).
\end{equation}
We have $\det E'\simeq L(x)$. The proof of
Proposition~\ref{prop:inputs} establishes the surjectivity of
\eqref{eq:adjoint-map} for $g\geq2$, as noted immediately
after that proof. Thus, for each $x$, every $\PGL_n$-bundle of
topological type $d+1\bmod n$ has a vector-bundle lift with
determinant $L(x)$. Hence the morphism
\[
 (E',x)\longmapsto(\mathbb P(E'),x)
 \in\MM_{\PGL_n}^{d+1\bmod n}\times C
\]
from the stack of pairs with $\det E'\simeq L(x)$ is surjective.
Applying Theorem~\ref{thm:interp} on the target and the
identification \eqref{eq:associated-adjoint} from
Lemma~\ref{lem:adjoint-reduction}, we obtain
\begin{equation}\label{eq:genus2-modification-vanishing}
 H^0\bigl(C,(\End_0E')(x)\bigr)=0
\end{equation}
on a nonempty open subset of these pairs.
The correspondence of inclusions $E\subset E'$ with
$E'/E\simeq\C_x$ has fibers $\Pp^{n-1}$ over each stack of
pairs: over $(E,x)$ it parametrizes lines in $E_x$, and over
$(E',x)$ it parametrizes one-dimensional quotients of $E'_x$.
These descriptions make both projections projective bundles
and hence surjective. The correspondence is irreducible because
the stack of $(E,\det E\simeq L,x)$ is irreducible by the proof
of Proposition~\ref{prop:inputs}.

Use $\mathscr S_a$ from the proof of
Proposition~\ref{prop:complete-hecke} for tuples
$(E,\det E\simeq L,x,\psi)$, where $\psi$ is the reduction of
\eqref{eq:genus2-source-frame} modulo $z^a$. The map sending
such a tuple to the modification determined by the line
$\C\mathbf e_1\subset E_x$ is surjective: any line in $E_x$
can be the first line of a frame inducing the prescribed
trivialization of $\det E$. Consequently,
\eqref{eq:genus2-modification-vanishing} holds on a nonempty
open subset of $\mathscr S_a$. Apply the proof of
Proposition~\ref{prop:complete-hecke}, as in
Lemma~\ref{lem:genus2-hecke}, after restricting to this open
subset. Its smooth evaluation maps and codimension estimate
still produce complete curves in $M^s$ avoiding any prescribed
closed subset of codimension at least two. For these curves,
\eqref{eq:genus2-containing-inclusion} and
\eqref{eq:genus2-modification-vanishing} imply $H^0(C,E^+)=0$.
\end{proof}

Cohomology and base change applied to
$R^1p_*\EE_\rho\simeq T_M|_{M^s}$ give
$R^1\pi_*\EE_f\simeq f^*T_M$.
By Lemma~\ref{lem:genus2-adjoint-splitting}, $H^0(C,E^+)=0$.
Pushing forward \eqref{eq:Heckeexact} therefore gives
\eqref{eq:hecke-cohomology-sequence} with
$f^*\mathcal V_\rho^\circ$ replaced by $f^*T_M$.
Remark~\ref{rem:splitting-beyond} allows us to apply
Lemma~\ref{lem:sl2-local-quotient} and
Proposition~\ref{prop:splitting}. Substituting
$\rk\nu=2n-2$, $\im\nu^2=\C e_{1n}$ and $\nu^3=0$ into
\eqref{eq:splitting} and \eqref{eq:Tj} gives
\begin{equation}\label{eq:genus2-adjoint-splitting}
 f^*T_M\simeq\OO_{\Pp^1}(2)\oplus
 \OO_{\Pp^1}(1)^{\oplus(2n-2)}\oplus
 \OO_{\Pp^1}^{\oplus n(n-2)},
\end{equation}
with
\begin{equation}\label{eq:genus2-positive-fibers}
 T_1=\delta_0\bigl(z^{-1}\im\nu\oplus z^{-2}\C e_{1n}\bigr),
 \qquad T_2=\delta_0\bigl(z^{-1}\C e_{1n}\bigr).
\end{equation}
Here $T_j=T_{1,j}=B_{1,\geq j}|_0$ is defined by
\eqref{eq:positive-filtration-definition} with $f^*T_M$ in place
of $f^*\mathcal V_\rho^\circ$, and $\delta_0$ is
the map \eqref{eq:source-connecting}, which is injective by
Lemma~\ref{lem:genus2-adjoint-splitting}. Taking degrees in
\eqref{eq:genus2-adjoint-splitting} and using
\eqref{eq:genus2-index} gives
\begin{equation}\label{eq:genus2-hecke-degree}
 \deg f^*T_M=2n,\qquad \deg f^*\OO_M(H)=n/\delta.
\end{equation}

\begin{proof}[Proof of Theorem~\ref{thm:tangent} for $g=2$]
Suppose $0\ne\FF\subsetneq T_M$ is saturated and
$\mu_H(\FF)\geq\mu_H(T_M)$.
Choose a general pair $(E,x)$ satisfying
\eqref{eq:genus2-interpolation}. By Lemmas~\ref{lem:genus2-hecke}
and~\ref{lem:genus2-adjoint-splitting}, choose the trivialization
\eqref{eq:genus2-source-frame} so that its Hecke curve
$f:\Pp^1\to M^s$ avoids the non-locally-free loci of $\FF$ and
$T_M/\FF$ and the corresponding $E^+$ has $H^0(C,E^+)=0$.
By \eqref{eq:genus2-index} and \eqref{eq:genus2-hecke-degree},
$\Pic(M)=\mathbb ZH$ and $\deg f^*\OO_M(H)=n/\delta$.
Therefore the degrees of $f^*\FF$ and
$f^*(T_M/\FF)$ do not depend on the choice of curve. They satisfy
\[
 \deg f^*\FF+\deg f^*(T_M/\FF)=\deg f^*T_M=2n.
\]
Set
\[
 \begin{gathered}
 Z=\Ann(\FF|_{[E]})\subset H^0(C,\End_0E\otimes K_C),\\
 Z^\vee=(T_M/\FF)|_{[E]},\qquad \dim Z=\rk(T_M/\FF).
 \end{gathered}
\]
By \eqref{eq:genus2-interpolation} and Serre duality, evaluation is
an isomorphism
\begin{equation}\label{eq:genus2-evaluation}
 H^0(C,\End_0E\otimes K_C)
 \xrightarrow{\sim}(\End_0E\otimes K_C)_x.
\end{equation}
Use the trivialization \eqref{eq:genus2-source-frame}, $dz|_x$
and the trace pairing to identify the image of $Z$ with
$Z_x\subset\mathfrak{sl}_n^\vee$. Then
\[
 \dim Z_x=\rk(T_M/\FF),\qquad Z(-x)=0.
\]
Thus the restriction of \eqref{eq:genus2-evaluation} is an
isomorphism $Z\xrightarrow{\sim}Z_x$; we will use it directly
in the residue pairing \eqref{eq:residue-pairing}.
Keep $E,x,z$ and the trivialization
\eqref{eq:genus2-source-frame} fixed. For $h\in\PGL_n$, conjugate
the homomorphism \eqref{eq:root-loop-group} by a lift of $h$
to $\SL_n$. On the affine chart \eqref{eq:hecke-affine-chart},
this replaces
\[
 \exp(te_{1n}/z)
 \quad\text{by}\quad
 \exp\bigl(t\Ad(h)e_{1n}/z\bigr).
\]
Different lifts give the same conjugation. In the fixed
trivialization, \eqref{eq:genus2-adjoint-superlattice} becomes
$\Ad(h)\Lambda^+$, whereas $Z_x$ is unchanged. Gluing this
lattice to $\End_0E|_{C\setminus\{x\}}$, as in the proof of
Lemma~\ref{lem:common-containing-bundle}, gives a bundle that
we continue to denote by $E^+$. These bundles form an algebraic
family over $\PGL_n$. By semicontinuity
\cite[Tag~0BDN]{Stacks}, $H^0(C,E^+)=0$ for $h$ in an open
neighbourhood of $1$.

The conjugated homomorphisms also give an algebraic family
\[
 \PGL_n\times\Pp^1\longrightarrow\MM
\]
by \eqref{eq:hecke-uniformization}. The open substack of
$\MM^s$ on which both $\FF$ and $T_M/\FF$ are locally free
has closed complement in $\MM$. The inverse image of this
complement is closed in $\PGL_n\times\Pp^1$, and its
projection to $\PGL_n$ is closed because $\Pp^1$ is proper.
It does not contain $1$, by the choice of $f$. Intersecting
its complement with the preceding neighbourhood of $1$, we
obtain a nonempty open subset on which every resulting curve
lies in $M^s$, avoids both non-locally-free loci, and satisfies
$H^0(C,E^+)=0$.

By \eqref{eq:hecke-affine-chart}, its tangent vector at $0$ is
the connecting image of $z^{-1}\Ad(h)e_{1n}$. This image is
nonzero by \eqref{eq:genus2-interpolation},
\eqref{eq:principal-parts-exact-sequence}, and
\eqref{eq:genus2-tangent-space}. For a curve in this open
subset, continue to write $f$ for the curve and use
$T_j=B_{1,\geq j}|_0$ from
\eqref{eq:positive-filtration-definition}. Put
\[
 t_j=\rk(T_j\longrightarrow Z^\vee)\qquad(j=1,2).
\]
The degree estimate \eqref{eq:factor-degree-bound}, applied to
\eqref{eq:genus2-adjoint-splitting}, gives
\[
 \deg f^*(T_M/\FF)\geq t_1+t_2.
\]
Use the fixed principal-parts connecting map
\[
 \partial^{\End_0E}_{x,2}:
 H^0\bigl(C,(\End_0E)(2x)/\End_0E\bigr)
 \longrightarrow H^1(C,\End_0E).
\]
On $\Ad(h)\Lambda^+/\Lambda_0$, this map is
\eqref{eq:source-connecting} for the chosen curve and is
injective because $H^0(C,E^+)=0$; see
Remark~\ref{rem:splitting-beyond}. We use injectivity only
on this subspace.
For the curve defined by $h$, \eqref{eq:genus2-positive-fibers} gives
\[
 \partial^{\End_0E}_{x,2}\bigl(z^{-1}\Ad(h)\im\nu\bigr)\subset T_1,
 \qquad
 T_2=\partial^{\End_0E}_{x,2}\bigl(z^{-1}\Ad(h)\C e_{1n}\bigr).
\]
Restricting the pairing $\langle T_1,Z\rangle$ to the displayed
subspace of $T_1$ can only decrease its rank. By the residue formula
\eqref{eq:residue-pairing}, this restricted pairing is
\[
 Z_x\times\im\nu\longrightarrow\C,
 \qquad (u,v)\longmapsto u(\Ad(h)v).
\]
The strict part of Lemma~\ref{lem:one-factor-strict}, applied to
$Z_x^\perp$ and $\im\nu$ in the irreducible $\PGL_n$-module
$\mathfrak{sl}_n$, gives, for general $h$,
\[
 t_1\geq\rk\bigl(Z_x\longrightarrow(\Ad(h)\im\nu)^\vee\bigr)
       >\frac{(2n-2)\rk(T_M/\FF)}{r}.
\]
For $T_2$, the non-strict inequality of
Lemma~\ref{lem:one-factor-strict}, applied to $Z_x^\perp$ and
$\C e_{1n}$, gives
\[
 \begin{aligned}
 t_2 =\rk\bigl(Z_x\longrightarrow(\Ad(h)\C e_{1n})^\vee\bigr), \qquad
 1 \geq t_2\geq\frac{\rk(T_M/\FF)}{r}>0.
 \end{aligned}
\]
Since $t_2$ is an integer, $t_2=1$. Choose $h$ in these dense open
rank loci and in the nonempty open subset constructed above.
Consequently
\begin{equation}\label{eq:genus2-quotient-refinement}
 \deg f^*(T_M/\FF)\geq t_1+t_2>
 \frac{(2n-2)\rk(T_M/\FF)}{r}+1.
\end{equation}
This is the quotient-degree inequality used in place of
\eqref{eq:factor-quotient-estimate}.

Since $\mu_H(\FF)\geq\mu_H(T_M)$,
\eqref{eq:genus2-index} and \eqref{eq:genus2-hecke-degree}
give $\deg f^*\FF>0$ and
\[
 \frac{\deg f^*(T_M/\FF)}{\rk(T_M/\FF)}\leq\frac{2n}{r}.
\]
Therefore
\[
 \begin{aligned}
 \deg f^*(T_M/\FF)
 &>\frac{(2n-2)\rk(T_M/\FF)}{r}+1\\
 &\geq\frac{n-1}{n}\deg f^*(T_M/\FF)+1,
 \end{aligned}
\]
so $\deg f^*(T_M/\FF)>n$, or equivalently
\begin{equation}\label{eq:genus2-small-degree}
 0<\deg f^*\FF<n.
\end{equation}
If the slopes were equal, then
$r\deg f^*\FF=2n\rk\FF$. Since $\gcd(n,r)=1$, this would imply
$n\mid\deg f^*\FF$, a contradiction.
Thus failure of stability would imply instability.

It remains to exclude instability. Let $\FF$ now be the first
nonzero term of the Harder--Narasimhan filtration of $T_M$
with respect to $\mu_H$; see \cite[\S1.6]{HL10}.
The group $\Gamma=\Pic^0(C)[n]$ acts by
$[E]\mapsto[E\otimes\gamma]$ for $\gamma\in\Gamma$, and
preserves $H$. Uniqueness of the filtration makes the canonical
$\Gamma$-linearization of $T_M$ restrict to $\FF$, and hence to
$\det\FF=\OO_M(aH)$. By \cite[(6.2), the proof of
Proposition~9.1, and the paragraph following Proposition~10.1]{BLS98},
the commutator pairing associated with $\OO_M(aH)$ is
$e_n^{-an/\delta}$, where $e_n$ is the Weil pairing on $\Gamma$.
A $\Gamma$-linearization makes $e_n^{-an/\delta}$ trivial.
Since the Weil pairing $e_n$ attains a primitive $n$th-root of unity,
\[
 n\mid\frac{an}{\delta},\qquad\text{equivalently}\qquad\delta\mid a.
\]
Equation~\eqref{eq:genus2-hecke-degree} therefore gives
\begin{equation}\label{eq:genus2-HN-degree}
 \deg f^*\FF=an/\delta\in n\mathbb Z,
\end{equation}
contradicting \eqref{eq:genus2-small-degree}.

Finally, if $n=2$ and $d$ is even, tensoring by a line bundle
reduces to trivial determinant, and $M\simeq\Pp^3$ by \cite{NR69}.
The tangent bundle of $\Pp^3$ is slope stable; see
\cite[\S1.4]{HL10}. This proves the assertion for all $n$ and $L$.
\end{proof}

\bibliographystyle{alpha}
\bibliography{representation_stability_restructured}
\end{document}